\crefname{hypothesis}{Hypothesis}{Hypotheses}
\title{Convergent Incremental Potential Contact\thanks{Submitted to arXiv \today.%
\funding{This work was funded by the Fog Research Institute under contract no.~FRI-454.}}}
\author{Minchen Li\thanks{Department of Mathematics, University of California, Los Angeles (\email{minchernl@gmail.com}, \email{cffjiang@math.ucla.edu}).}
\and Zachary Ferguson\thanks{Courant Institute of Mathematical Science, New York University (\email{zfergus@nyu.edu}, \email{dzorin@cs.nyu.edu}, \email{panozzo@nyu.edu}).}
\and Teseo Schneider\thanks{Department of Computer Science, University of Victoria (\email{teseo@uvic.ca}).}
\and Timothy Langlois\thanks{Adobe Research (\email{tlangloi@adobe.com}, \email{kaufman@adobe.com}).}
\and Denis Zorin\footnotemark[3]
\and Daniele Panozzo\footnotemark[3]
\and Chenfanfu Jiang\footnotemark[2]
\and Danny M. Kaufman\footnotemark[5]}
\newcommand{\citetdoyen}[0]{Doyen et al.~\cite{doyen2011time}}
\newcommand{\citetIPC}[0]{Li et al.~\cite{Li2020IPC}}
\newcommand{\R}[0]{\mathbb{R}}
\DeclareMathOperator*{\argmin}{\arg\!\min}
\newcommand{\bfi}[1]{\textbf{\textit{#1}}}
\newcommand{\ct}[0]{t}
\newcommand{\cs}[0]{s}
\newcommand{\excluderadius}[1]{\setminus_{#1}}
\newcommand{\defeq}{\vcentcolon=}
\newcommand{\dhat}{\ensuremath{\hat{d}}}
\newcommand{\Vint}{\ensuremath{V_\text{int}}}
\newcommand{\Eint}{\ensuremath{E_\text{int}}}
\newcommand{\EIPC}{EIPC\xspace} %
\newcommand{\acceptable}{acceptable\xspace}
\definecolor{darkred}{rgb}{0.8, 0.0, 0.0}
\definecolor{pinkish}{rgb}{0.87, 0.36, 0.51}
\definecolor{darkgreen}{rgb}{0.0, 0.5, 0.0}
\definecolor{violet}{rgb}{0.341, 0.024, 0.549}
\definecolor{Orange}{rgb}{1, 0.6, 0}
\begin{document}

\maketitle

\begin{abstract}
Recent advances in the simulation of frictionally contacting elastodynamics with the Incremental Potential Contact (IPC) model have enabled inversion and intersection-free simulation via the application of mollified barriers, filtered line-search, and optimization-based solvers for time integration. In its current formulation the IPC model is constructed via a discrete constraint model, replacing non-interpenetration constraints with barrier potentials on an already spatially discretized domain. However, while effective, this purely discrete formulation prohibits convergence under refinement. To enable a convergent IPC model we reformulate IPC potentials in the continuous setting and provide a first, convergent discretization thereof. We demonstrate and analyze the convergence behavior of this new model and discretization on a range of elastostatic and dynamic contact problems, and evaluate its accuracy on both analytical benchmarks and application-driven examples.
\end{abstract}

\begin{keywords}
  elastodynamics, finite elements, frictional contact
\end{keywords}

\begin{AMS}
  74B20, 74H15, 74S05, 74M15, 74M10
\end{AMS}

\section{Introduction}

The robust and accurate modeling of large-deformation frictionally contacting elastodynamics remains a challenging problem in simulation. The recently proposed Incremental Potential Contact (IPC) model\ \cite{Li2020IPC} enables inversion and intersection-free simulation of contacting elastodynamics via the application of mollified barriers, filtered line-search, and optimization-based solvers for time integration. As originally formulated, IPC begins with a discrete model, replacing non-interpenetration constraints with locally-supported $C^2$ barrier potentials, on an already spatially discretized domain. Each barrier potential, in turn, evaluates unsigned distances between boundary mesh-primitive pairs to obtain intersection-free trajectories for complex multibody simulations where domains can have arbitrarily sharp geometries and undergo large deformation. Subsequent work has extended the IPC model to solve problems in rigid and multibody dynamics\ \cite{Ferguson2021Rigid,Lan2022ABD,Chen2022MultibodyIPC}, codimensional simulation of shells and rods\ \cite{li2021codimensional}, subspace modeling \cite{Lan2021Medial}, embedded interfaces\ \cite{choo2021barrier}, viscoelasticity and elastoplasticity\ \cite{Li2022ECI}, and coupled MPM-FEM modeling\ \cite{Li2022BFEMP}.

However, while effective, the original IPC model's purely discrete formulation prohibits convergence under refinement. To enable a convergent IPC model we reformulate IPC potentials in the continuous setting and provide a first, convergent discretization thereof. We focus on providing a consistent frictional contact model that converges under refinement of discretization, while retaining the original IPC model's non-interpenetration and global convergence properties for both highly refined and coarse models --- regardless of problem complexity. To do so we re-derive contact barrier and dissipative friction, beginning from a continuous formulation while addressing short-comings in the original handling of the max operator and so physical forces.

\section{Contact Model}

We begin construction of our contact model with the barrier energy,
\[
b(d, \hat{d})=
\Bigg\{\begin{array}{lr}
-\kappa \left(\frac{d}{\hat{d}} - 1\right)^2\ln \left(\frac{d}{\hat{d}}\right), & 0<d<\hat{d} \\
0 & d \geq \hat{d},
\end{array}
\]
evaluated on unsigned distances $d$, with a stiffness parameter $\kappa$ in units of \si{\pascal\cdot\meter}. Note that for larger distances the barrier is not active, it then activates when $d$ decreases below the small activation threshold $\hat{d}>0$. The barrier diverges as the distances tend to zero, preventing interpenetrations.

Assigning the barrier to distances $d(x_1,x_2) = \|x_1-x_2\|$ evaluated between material point pairs $x_1$ and $x_2$, we define the corresponding smoothed, contact energy density
$$b\big(d(x_1,x_2), \hat{d} \big).$$
For contact between a point $x \in \R^d$ and a curve $c(s):[0,1] \rightarrow \mathbb{R}^{d}$ the barrier is likewise defined by the unsigned distance between the point and curve  
$$d(x,c) = \min_{s\in [0,1]} d\big(x, c(s) \big).$$
This gives us a corresponding point-to-curve barrier energy 
$$b(d(x,c), \hat{d}),$$
or equivalently, as $b$ is monotonically decreasing, we can define it as
$$\max_{s\in [0,1]} b\big( d\big(x, c(s)\big),\hat{d} \big).$$

The contact energy for a curve $c_1$, with respect to a curve $c_2$, is then 
$$ P_{\to c_2}(c_1) = \int_{\ct \in [0,1]} \left(\max_{\cs \in [0,1]} b\left( d\big(c_1(\ct), c_2(s) \big),\hat{d} \right) \right) d\ct.$$
We correspondingly define the total contact potential between the two curves as  
\[P(c_1,c_2) = \tfrac{1}{2}\Big( P_{\to c_2}(c_1) + P_{\to c_1}(c_2) \Big).\]

Remark: in the limit, as $\hat{d}\rightarrow 0$, the energies $P_{\to c_2}(c_1)$ and $P_{\to c_1}(c_2)$ equivalently measure the total contact potential between the curves $c_1$ and $c_2$ while, for finite $\hat{d}$, we take their average.

With deformation we must consider self-contact. For a single curve $c$ the self-contact energy is then 
\[
\frac{1}{2} \int_{\ct \in [0,1]} \left(\max_{\cs \in [0,1] \excluderadius{r} \ct} b\left( d\big(c(\ct), c(\cs) \big), \hat{d} \right) \right) d\ct,
\]
where we define $\excluderadius{r}: \{x~|~a \leq x \leq b\} \times \R \times \R \mapsto \mathcal{P}(\R)$ as 
\[
[a,b] \excluderadius{r} \ct \defeq \{s \in [a, b]~|~|t - s| > r\},
\]
with $r\rightarrow0$.

With self-contact defined we are now able to directly generalize the contact potential to an arbitrary number of curves $\mathcal C = \{ c_i\}$ by treating all contacts in the domain as \emph{self-contacts}. We first (re)parameterize the domains across all curves in $\mathcal C$ with $s \in [0,1]$ so that $\mathcal C(s)$ traverses the material points across all curves $\{ c_i\}$ contiguously. The total contact potential is then simply
\[
P(\mathcal C) = \frac{1}{2} \int_{\ct\in [0,1]} \left(\max_{s\in [0,1] \excluderadius{r} \ct} b\left( d\big(\mathcal C(\ct), \mathcal C(\cs) \big),\hat{d} \right) \right) d\ct.
\]

For contact in $\R^3$, we extend our barrier potential to a set of surfaces $\mathcal S = \{S_i\}$.  We parameterize these surfaces by common (possibly discontinuous) coordinates $u \in \tilde M \subset \R^2$, so that $\mathcal S(u)$ traverses the material points across all surfaces $\{ S_i\}$ contiguously. The total contact potential is then 
\begin{align}
\label{eq:cont-3D-barrier=potential}
P(\mathcal S) = \frac{1}{2} \int_{u \in \tilde M} \left(\max_{v \in \tilde M \excluderadius{r} u} b\left( d\big(\mathcal S(u), \mathcal S(v) \big),\hat{d} \right) \right) du,
\end{align}
where we overload the operator $\excluderadius{r}: \mathcal{P}(\R^2) \times \R \times \R^2 \mapsto \mathcal{P}(\R^2)$ to be 
\[
\tilde{M} \excluderadius{r} u \defeq \{v \in \tilde{M}~|~\|u-v\|_2 > r\},
\]
with $r\rightarrow0$.

Remark: within an infinitesimal region around each point at coordinate $u$ (respectively $\ct$), we do not resolve contact; this region is empty in the limit. In turn this requires that $\hat{d}/r\rightarrow 0$ to ensure that the contact potential will not diverge for all configurations. In the discrete setting, with finite $\hat{d}$ and spatial mesh resolution, this requirement simplifies.

\section{Friction}

Frictional contact adds contact-driven dissipative forcing that opposes sliding. The magnitude and direction of these frictional forces, generated across contacting codomains are determined by choice of a friction model that is, in turn, parameterized by the sliding velocity field, the normal pressures exerted by contact, and the frictional coefficient between the contacting codomains. 

We model friction via the Maximal Dissipation Principle~\cite{moreau73unilateral} which posits frictional forces maximize the rate of dissipation in sliding up to a maximum magnitude imposed by a limit surface; e.g., Coulomb's constraint.

For contacts $k$ formed between any two surface (alternately curve) points $x_1$ and $x_2$, with corresponding velocities $\dot{x}_1$ and $\dot{x}_2$, we extract the sliding velocity as 
$$v_k = P(x_1,x_2)(\dot{x}_1 - \dot{x}_2),$$ 
where the sliding projection is $P = T(x_1,x_2) T(x_1,x_2)^T \in \R^{3 \times 3}$ with $T(x_1,x_2) \in \R^{3 \times 2}$ constructed from the unit column vectors orthogonal to $x_1-x_2$. %

Maximizing dissipation rate subject to the Coulomb constraint defines friction forces $f_k \in \R^3$ applied at $k$ as
\begin{align}
\begin{split}
f_k = \argmin_{f} \> f^T v_k \> \> \text{s.t.} \> \> \|f\| \leq \mu_k \lambda_k,
   \end{split}
 \label{eq:frictionForceDef}
\end{align}
with $\mu_k$ the local frictional coefficient and $\lambda_k$ the magnitude of the normal force exerted by the contact barrier between points $x_1$ and $x_2$.
Equivalently we have 
\begin{align}
\begin{split}
f_k \in -\partial F_k(v_k),
   \end{split}
 \label{eq:frictionPotential}
\end{align}
with a nonsmooth energy  
$$F_k(v_k) = \mu_k \lambda_k \| v_k \|$$
encoding the transitions between sticking and sliding behaviors corresponding to the varying active sets of \cref{eq:frictionForceDef}.

\begin{wrapfigure}{r}{0.3\textwidth}
    \centering
    \includegraphics[width=\linewidth]{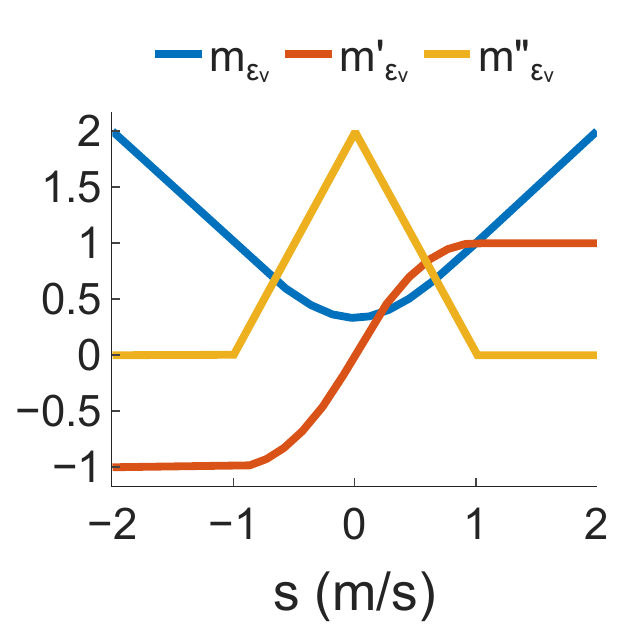} 
    \vspace*{-15pt}
    \caption{Plot of \cref{eq:friction_mollifier} and its derivatives ($\epsilon_v = 1$ for better visualization).}
    \label{fig:friciton_mollifier}
\end{wrapfigure}

\subsection{Smoothed friction}

We formulate friction with a smooth dissipative pseudo-potential. We start by mollifying $F_k$ with 
$$D_k(v_k) = \mu \lambda_k m_{\epsilon_v}(\|v_k\|), $$
where 
\begin{equation}
\label{eq:friction_mollifier}
m_{\epsilon_v}(s) = 
    \begin{cases} 
      -\frac{s^3}{3\epsilon_v^2} +\frac{s^2}{\epsilon_v} + \frac{\epsilon_v}{3}, & s < \epsilon_v \\
      s, & s \geq \epsilon_v 
   \end{cases}    
\end{equation}
approximates the jump conditions (see \cref{fig:friciton_mollifier}) and $\epsilon_v$ (in units of \si{\meter/\second}) defines the range below which small sliding velocities are resolved as static. Friction forces are then $$f_k = -\nabla D_k(v_k),$$ and better approximate the nonsmooth slip-stick transitions as $\epsilon_v \to 0$.

\subsection{Integration}

For a pair of \emph{contacting} points $x_1$ and $x_2$ the dissipative energy is then 
$$D(\dot x_1, \dot x_2, x_1, x_2) = \mu \ \lambda(x_1, x_2)  \ m_{\epsilon_v}\big( \| P(x_1,x_2)\ (\dot x_1 - \dot x_2)\| \big),$$ where 
$$\lambda(x_1, x_2) = -\frac {\partial b\big(d(x_1, x_2)\big)}{\partial d},$$ is the (positive) contact force magnitude between points $x_1$ and $x_2$, and the friction force applied between points $x_1$ and $x_2$ is correspondingly 
$$f(\dot x_1, \dot x_2, x_1, x_2) = - \nabla_{(\dot x_1, \dot x_2)} D(\dot x_1, \dot x_2, x_1, x_2).$$

We next parameterize collections of curves $\mathcal C = \{c_i\}$ and surfaces $\mathcal S = \{S_i\}$ in both space (respectively $s \in [0,1]$ and  $u \in \tilde M \subset \R^2$) and time $\ct \in \R$. In the following we continue to reserve overdots for \emph{time} derivatives and, unless needed, do not explicitly include time parameters, e.g. we have $c(s) = c(s,\ct)$ and $\dot c(s) = \dot c(s,\ct) = \partial c(s,\ct) / \partial \ct$. 
 Then, the total, dissipative friction potential for a system of curves is 
 \begin{align*}
 D(\dot{\mathcal C}, \mathcal C) = \frac{1}{2} &\int_{s\in [0,1]} D\Big(\dot{\mathcal  C}(s), \dot{\mathcal C}(\ell[s]), \mathcal C(s), \mathcal C(\ell[s]) \Big) \ ds \> \text{with}\\
 &\ell[s] = \argmin_{u\in [0,1] \excluderadius{r} s} d\Big(\mathcal C(s), \mathcal C(u) \Big),
 \end{align*}
 while the corresponding dissipative friction potential for surfaces is
  \begin{align*}
 D(\dot{\mathcal S}, \mathcal S) = \frac{1}{2} & \int_{u \in \tilde M} D\Big(\dot{\mathcal S}(u), \dot{\mathcal S}(\ell[u]), \mathcal S(u), \mathcal S(\ell[u]) \Big) \ du \> \text{with}\\
 &\ell[u] = \argmin_{v \in \tilde M \excluderadius{r} u} d\Big(\mathcal S(u), \mathcal S(v) \Big).
 \end{align*}

\section{Contact Spatial Discretization}
\label{sec:contact-discretization}

In the discrete setting we apply piecewise linear, compatible discretizations of curves with edges, areas (2D) and surfaces (3D) with triangles, and volumes (3D) with tetrahedra. Here we have two tasks. First, we integrate the contact potentials over boundary (edge and triangle) elements and second, we smoothly approximate the $\max$ operator in these integrals so that we can efficiently solve the resulting nonlinear problems (see \cref{sec:numerical-solution}) for statics and dynamic time-stepping with second-order, Newton-type methods. 

In the following we denote the computational mesh for contact and friction potentials over the set of \emph{boundary} vertices $V$, \emph{boundary} edges $E$, and \emph{boundary} triangles (3D) $T$. 

\subsection{Discretization and Numerical Integration}

We begin by defining our curve discretization with polyline geometry. As in the smooth case we can parameterize the domain across all polylines with $s \in [0,1]$ so that $p(s):[0,1] \rightarrow \mathbb{R}^{d}$ traverses all material points, across all edges $e \in E$ in the polylines contiguously. The corresponding curve contact potential is then 
\begin{align}
    \frac{1}{2}\int_{s\in [0,1]} \left(\max_{e\in E\setminus p(s)} b\big( d(p(s), e),\hat{d}\big) \right) \ ds,
    \label{eq:pw_linear_curve}
\end{align}
where $E\setminus p$ is the set of boundary edges that do not contain the point $p$.

Applying polyline vertices as nodes (and quadrature points), we numerically integrate the curve contact potential. For each nodal position $x \in V$ we then have a corresponding material space coordinate $\bar x \in \bar V$. Piecewise linear integration of the curve barrier is then 
\[ \frac{1}{2}\sum_{\bar x \in \bar V} w_{\bar x} \left(\max_{e\in E\setminus x(\bar x)} b\Big( d\big(x(\bar x), e\big),\hat{d} \Big) \right), \]
where $w_{\bar x}$ are the quadrature weights, each given by half the sum of the lengths (in material space) of the two boundary edges incident to ${\bar x}$.
Correspondingly, following the same steps, piecewise-linear integration of the surface barrier over a triangulated boundary mesh in 3D gives the surface contact potential
\begin{align}
    \frac{1}{2} \sum_{\bar x \in \bar V}  w_{\bar x} \left(\max_{\ct \in T\setminus x(\bar x)} b\Big( d\big(x(\bar x), \ct\big),\hat{d} \Big) \right),
    \label{eq:pw_linear_surface}
\end{align}
where $T\setminus x$ is the set of boundary faces that do not contain $x$, and $w_{\bar x}$ are the quadrature weights, each given by one third of the sum of the areas (in material space) of the boundary triangles incident to ${\bar x}$.

\subsection{Smoothly Approximating the Max Function}
Our next step is to smoothly approximate the $\max$ operator in the contact potentials.
A natural option to consider would be a softmax approximator. Using the discrete curve energy as a concrete example we could smooth the max operator with the $p$-norm as
\begin{align*}
    \frac{1}{2} \sum_{\bar x \in \bar V}  w_{\bar x} \left(\sum_{e \in E\setminus x(\bar x)} b\Big( d\big(x(\bar x), e\big),\hat{d} \Big)^p \right)^{1/p}.
\end{align*}
We note, however, that this would significantly decrease sparsity in subsequent numerical solves by increasing stencil-size per contact evaluation.  At the same time, accuracy would require large $p$ and so additional ill-scaling also contributing to significant increase in numerical solver costs. Similar issues arise for LogSumExp approximation, while sparsity increase could be addressed in the $p$-norm formulation, by dropping the outer normalizing $1/p$ term, but doing so would generate increasingly ill-scaled and ill-conditioned problems with the necessary further increases in $p$ required for improved accuracy in the approximation.

We leave investigations of these above approximators to future work. Here, to smoothly approximate the barrier energies with accuracy \emph{and} computational efficiency we directly consider the boundary geometry. We begin with an evaluation of the max barrier at a point $x$ where at least one edge in $E\setminus x$ (respectively triangle in $T\setminus x$) is closer than $\hat d$. If we consider a rough \emph{starting} approximation by summation over all barriers between $x$ and nonincident boundary elements, clearly one of the nonzero summands corresponds to the desired max. In the lucky case, when all other boundary elements are farther than $\hat d$, this approximation is sharp. However, in cases where more than one boundary element is within the $\hat d$ distance of $x$ this approximation will overestimate the max barrier with the incorrect addition of undesirable barriers from these close-by elements. We require $\hat d$ small with respect to edge length. Then, when this set of ``close-enough'' boundary elements form a convex curve (respectively surface) w.r.t. the evaluation point they each contribute an extra barrier contribution that is exactly evaluated by a distance to a vertex (respectively edges) incident to the closest boundary edge (respectively boundary triangle). 
Likewise, when they form a nonconvex curve (respectively surface) the extraneous barrier contributions are lower-bounded by barriers evaluated with vertices (respectively edges) incident to the closest edge (respectively triangle).

\begin{figure}[t]
    \centering
    \includegraphics[width=0.3\linewidth]{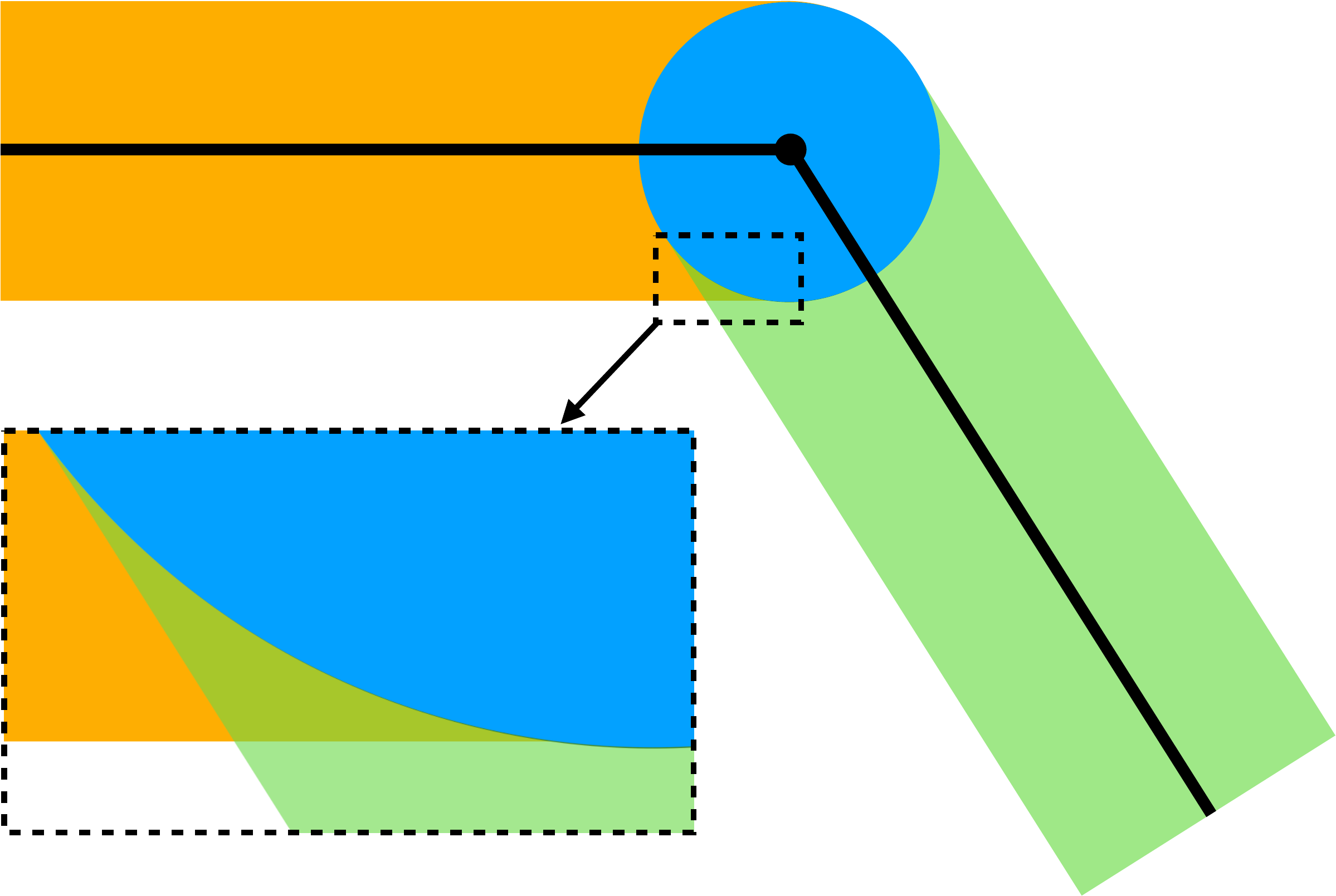}
    \caption{In this simple two-edge illustration, the yellow and green regions are only counted once by the summation, but the blue region and the yellow-green overlap are counted twice. If we subtract once the blue region, then for the right-top boundary (convex), it becomes perfect, but for the left-bottom boundary (concave), we can still see some overlap that are counted twice.}
    \label{fig:corner}
\end{figure}

Our resulting approximators for curves and surfaces are then respectively  
\begin{align*}
\Psi_c(x) &= \sum_{e\in E\setminus x} b( d(x, e),\hat{d}) \quad - \sum_{x_2\in V_{int}\setminus x}b( d(x, x_2),\hat{d}) \> \approx \max_{e\in E\setminus x} b( d(x, e),\hat{d}) ,
\end{align*} 
and
\begin{align*}
\Psi_s(x) &= \sum_{t\in T\setminus x} b( d(x, t),\hat{d}) 
\quad - \sum_{e\in E_{int}\setminus x} b( d(x, e),\hat{d})
\quad + \sum_{x_2\in V_{int}\setminus x}b( d(x, x_2),\hat{d}) \\
&\approx \max_{t\in T\setminus x} b( d(x, t),\hat{d}),
\end{align*}
where $\Vint \subseteq V$ is the subset of internal curve/surface nodes (e.g., vertices with valence two for curves) and $\Eint \subseteq E$ is the subset of internal surface edges (i.e., edges incident to two triangles). 
For locally convex regions this estimator is tight while remaining smooth. In turn, for nonconvex regions it improves over direct summation (see \cref{fig:corner,fig:dist_field_comp}). 

\begin{figure}[t]
    \centering
    \includegraphics[width=0.75\linewidth]{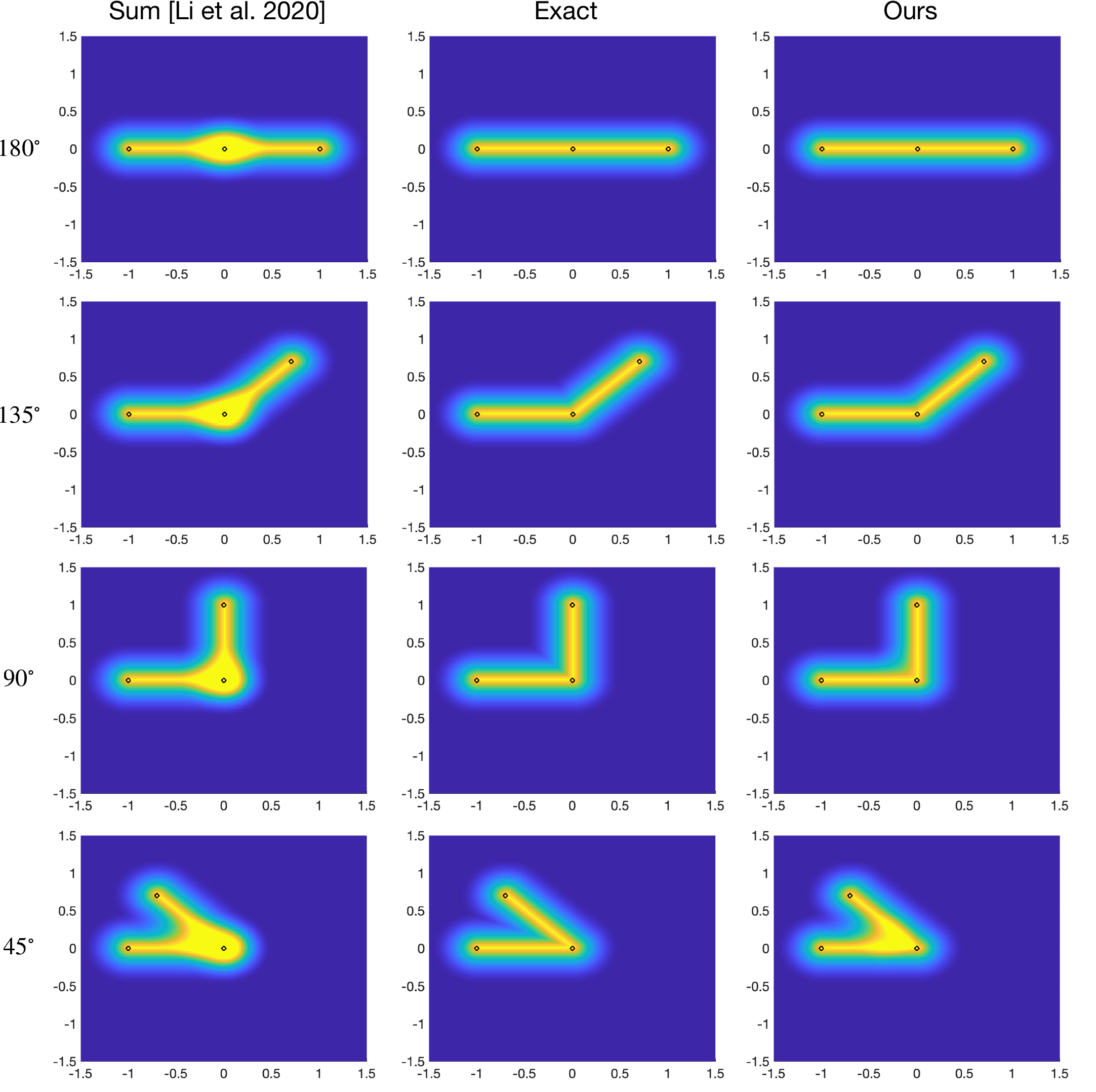}
    \caption{Here we visualize the contact energy field with different approximations to the $\max$ operator. The energy visualized is simplified to $(d-\hat{d})^2$ when $d < \hat d$ with $\hat{d}=0.5$ for better visibility.
    }
    \label{fig:dist_field_comp}
\end{figure}

The corresponding discrete barrier potentials are then simply
$$P_c(V) = \sum_{x \in V} w_x \Psi_c(x),$$ 
for curves, and 
$$P_s(V) = \sum_{x \in V} w_x \Psi_s(x),$$ for surfaces, where we simplify with $w_x = w_{\bar x}$ defined appropriately, per domain, as covered above.

Remark: For diminishing $\hat{d}$, the (naive) direct summation over all contact pairings, without correction (e.g., $\sum_{t\in T\setminus x} b( d(x, t),\hat{d})$), becomes an increasingly good approximation to the $\max$-based barrier energy density as the number of extraneous (not closest but still within $\hat d$ distance) surface elements becomes smaller and smaller.

\subsection{Edge Quadrature}

With the above discretization, we ensure that modeled geometries will remain free of point-edge and point-triangle interpenetrations independent of the resolution applied. This is because we have defined our barriers on point-edge and point-triangle pairings.   Likewise, as we show in \cref{sec:3D-convergence} this discretization converges under refinement so that edge-edge intersections also vanish. However, in many practical applications, it is desirable to prevent edge-edge intersections even at moderate resolution. To do so we additionally construct an alternate, edge-based quadrature of the surface barrier.

With an edge quadrature we discretize \cref{eq:cont-3D-barrier=potential} with
\[\frac{1}{2}\sum_{\bar{e}\in E} w_{e} (\max_{t\in T\setminus \bar{e}} b( d(e, t),\hat{d})).\]
Here $w_{e}$ are the quadrature weights, each given by one-third of the sum of the areas (in material space) of the boundary triangles incident to ${e}$, using edge-surface distance to approximate the average of the point-surface distances for surface points near each edge. 
For efficiency, we then further approximate edge-surface distances in our barriers with edge-edge distances,
\[\frac{1}{2}\sum_{e \in E} w_{e} (\max_{e_2\in E\setminus e} b( d(e, e_2),\hat{d})),\]
where $E \setminus e$ is the set of surface edges non-adjacent to $e$.
Approximating $\max$ with our summation we then obtain the edge-edge barriers 
\begin{align*}
    \Psi_w(e) &= \sum_{e_2 \in E\setminus e} b(d(e, e_2), \hat{d}) - \sum_{x_1\in V\setminus e} \> \big(\rho(x_1)-1 \big) \> b( d(x_1, e),\hat{d}) \\ 
    & \approx \max_{e_2\in E\setminus e} b( d(e, e_2),\hat{d}),\\
\end{align*}    
where $\rho(x_1)$ gives the number of surface edges incident to node $x_1$, and $V \setminus e$ is the set surface nodes that are not incident to edge $e$. 
The corresponding wireframe edge-barrier potential is then    
\begin{align*}    
    P_w(E) &= \frac{1}{2}\sum_{e\in E} w_{e} \Psi_w(e).
\end{align*}

\subsubsection{Combined Discretization}

When desired, to ensure complete non-in\-tersection of 3D surfaces, independent of discretization resolution, we then jointly employ the above edge-based quadrature in combination with our node-based quadrature. The total, combined contact potential for surfaces is then 
\[\alpha P_s(V) + (1-\alpha)P_w(E),\]
where $\alpha \in (0, 1]$ (averaging with $\alpha = 1/2$ is sufficient) so that under combined spatial ($x$) and distance ($\hat{d}$) refinement both energies converge. Alternately when small edge-edge intersections are acceptable for modeling errors we can apply solely the node-based potential $P_s(V)$.

\subsection[Positivity of Phi\_c, Phi\_s, and Phi\_w]{Positivity of $\Phi_c$, $\Phi_s$, and $\Phi_w$}
\label{sec:positivity}

Modeling contact via barrier representation requires a sufficiently small $\dhat$. Here, for the purposes of demonstrating positivity of our barrier energies, we  further define an acceptably small scale for $\dhat$:
\begin{definition}
    For any point $x$, we call $\dhat$ \acceptable{} if, every connected component of the intersection of ball $B_{\dhat}^x$ of radius $\dhat$ centered on $x$ with the boundary contains at most one vertex.
\end{definition}
We remark that, if the rest mesh is not in contact, then $\dhat$ is \acceptable. However, $\dhat$ must be \acceptable for every time step, which can be achieved by shrinking.

Before showing the positivity of the contact barriers, we settle on a few simple statements.
\begin{remark}\label{rem:edge-bound}
    Let $x_1$ and $x_2$ be the endpoints of an edge $e$; for any point $x$ in two and three dimension, $d(x,e)\le \min(d(x,x_1), d(x,x_2))$,
\end{remark}
\begin{remark}\label{rem:tri-bound}
    Let $e_1$, $e_2$ and $e_3$ be the edges of a triangle $t$; for any point $x$ in two and three dimension, $d(x,t)\le \min(d(x,e_1),d(x,e_2), d(x,x_3))$,
\end{remark}
and
\begin{remark}\label{rem:eedge-bound}
    Let $e_2$ be an edge and $x$ be one of its endpoints; for any edge $e_1$ in two and three dimensions, $d(e_1,e_2)\le d(x,e_1)$.
\end{remark}
We are now ready to show the positivity of the different barrier potentials. 

\begin{proposition}\label{prop:psic}
If $\dhat$ is \acceptable{}, then $\Psi_c \ge 0$.
\end{proposition}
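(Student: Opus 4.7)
The plan is to show $\Psi_c(x) \ge 0$ by constructing a one-to-one assignment of each subtracted vertex term to an edge term of at least equal magnitude, exploiting the \acceptable{} hypothesis to rule out conflicts. Since the barrier $b(\cdot,\dhat)$ vanishes for $d\ge \dhat$ and is strictly decreasing on $(0,\dhat)$, only terms at distance strictly less than $\dhat$ can contribute. So I would first reduce the problem to the ``active'' index sets
$V^* = \{x_2\in V_{int}\setminus x : d(x,x_2)<\dhat\}$ and
$E^* = \{e\in E\setminus x : d(x,e)<\dhat\}$, and recast the inequality as
$\sum_{e\in E^*} b(d(x,e),\dhat) \ge \sum_{x_2\in V^*} b(d(x,x_2),\dhat)$.

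Next, I would extract the two structural consequences of \acceptable{} $\dhat$ needed for the matching. Observation (a): for any $x_2\in V^*$, $x$ and $x_2$ cannot be joined by an edge, because such an edge would be a straight segment between two points in $B_{\dhat}^x$ and hence (by convexity of the Euclidean ball) entirely contained in $B_{\dhat}^x\cap\text{boundary}$, giving a component with two mesh vertices. Consequently both edges incident to a vertex $x_2\in V^*$ lie in $E\setminus x$, and by Remark~\ref{rem:edge-bound} each such edge $e$ satisfies $d(x,e)\le d(x,x_2)<\dhat$, so both lie in $E^*$. Observation (b): if two distinct internal vertices $x_2,x_2'\in V^*$ shared an edge, that edge again would be fully contained in $B_{\dhat}^x$ and its component would contain two vertices, again violating \acceptability{}. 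Hence the incident-edge sets of distinct elements of $V^*$ are pairwise disjoint.

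With (a) and (b) in hand, I would define an injection $\phi\colon V^*\to E^*$ by letting $\phi(x_2)$ be, say, the first (in any fixed ordering) of the two edges incident to $x_2$. By Remark~\ref{rem:edge-bound} together with the monotonicity of $b$ in its first argument, $b(d(x,\phi(x_2)),\dhat) \ge b(d(x,x_2),\dhat)$. Summing over $V^*$ and using injectivity of $\phi$,
\[
\sum_{e\in E^*} b(d(x,e),\dhat) \;\ge\; \sum_{x_2\in V^*} b(d(x,\phi(x_2)),\dhat) \;\ge\; \sum_{x_2\in V^*} b(d(x,x_2),\dhat),
\]
which is exactly $\Psi_c(x)\ge 0$.

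The main obstacle is purely the combinatorial bookkeeping: making sure that the \acceptable{} hypothesis is strong enough to simultaneously prevent an edge from being excluded by the $E\setminus x$ restriction \emph{and} prevent two different internal vertices from competing for the same incident edge. Both reductions hinge on the same geometric fact, namely that a segment with both endpoints inside $B_{\dhat}^x$ lies entirely inside $B_{\dhat}^x$, so once this is stated cleanly the rest is a direct application of Remark~\ref{rem:edge-bound} and the monotonicity of $b$.
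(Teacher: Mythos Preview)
Your argument is correct and uses the same ingredients as the paper---\cref{rem:edge-bound}, the monotone decrease of $b$, and the one-vertex-per-component consequence of the \acceptable{} hypothesis---but packages them differently. The paper proves nonnegativity component by component: each connected component of $B_{\dhat}^x\cap\text{boundary}$ contains at most one vertex, so a two-case analysis (no vertex, or one vertex flanked by two edges) suffices, with the single vertex term absorbed by one of its incident edge terms. You instead work globally, building an injection $V^*\hookrightarrow E^*$ and using observations (a) and (b) to guarantee both well-definedness and injectivity. Your route avoids the (somewhat implicit) step of splitting the sum over components and makes the $E\setminus x$ exclusion issue explicit via (a); the paper's route is more geometric and keeps the local picture front and center. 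Either way the crux is the same matching of each active interior vertex to an incident edge with no smaller barrier value.
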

\begin{proof}
We show that 
\begin{align*}
\Psi_c(x) &= \sum_{e\in E\setminus x} b( d(x, e),\dhat) - \sum_{x_2\in \Vint\setminus x}b( d(x, x_2),\dhat)
\end{align*} 
is positive for every connected component $C$ in the intersection  between $B_{\dhat}^x$ and the boundary. Note that if $C=\emptyset$, then $\Psi_c(x)=0$. We now count the number of possible edges and vertices in $C$:
\begin{enumerate}
    \item $C$ contains only one edge and no vertices.
    \item $C$ contains only two edges and one vertex.
\end{enumerate}
No other cases are possible as the boundary is manifold, $C$ contains only one connected component, and $B_{\dhat}^x$ can contain at most a vertex. For case 1, $\Psi_c(x)\ge 0$ is trivial as it contains only positive terms. For case 2, it follows from \cref{rem:edge-bound} and $b$ being a monotonically decreasing function, that for the vertex $\tilde x$ shared by the two edges $e_1$ and $e_2$
\[
b( d(x, e_1),\dhat) - b( d(x, \tilde x),\dhat)= \Delta b\ge 0,
\]
therefore 
\[
\Psi_c(x) =  b( d(x, e_1),\dhat)+ b( d(x, e_2),\dhat) - b( d(x, \tilde x),\dhat)=
\Delta b+b( d(x, e_2),\dhat)\ge0.
\]
\end{proof}
The proof for $\Psi_s$ follows a similar idea.
\begin{proposition}\label{prop:psis}
If $\dhat$ is \acceptable{}, then $\Psi_s \ge 0$.
\end{proposition}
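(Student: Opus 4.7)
The plan is to follow the structure of the proof of \cref{prop:psic}, decomposing $\Psi_s(x)$ by connected components of $B_{\dhat}^x \cap \partial$ and proving non-negativity on each component separately. Since $b(d,\dhat) = 0$ for $d \ge \dhat$, every non-vanishing term in $\Psi_s(x)$ comes from a triangle in $T \setminus x$, an internal edge in $\Eint \setminus x$, or an internal vertex in $\Vint \setminus x$ whose nearest point to $x$ lies strictly inside $B_{\dhat}^x$, and each such element contributes to a unique connected component $C$. For each $C$, I would collect the relevant triangles, internal edges, and internal vertices into sets $T_C$, $E_C$, $V_C$; by acceptability, $|V_C| \le 1$.

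The main lever is the combination of \cref{rem:tri-bound} with monotonicity of $b$: for any edge $e$ of a triangle $t$, $b(d(x,t),\dhat) \ge b(d(x,e),\dhat)$, and similarly $b(d(x,e),\dhat) \ge b(d(x,v),\dhat)$ for any endpoint $v$ of $e$ via \cref{rem:edge-bound}. Two cases then mirror \cref{prop:psic}. When $|V_C| = 0$, the triangles of $T_C$ are glued in $C$ only through parts of internal edges in $E_C$, and for a \acceptable{} $\dhat$ this triangle--edge adjacency graph is a tree with $|E_C| = |T_C| - 1$; pairing each $e \in E_C$ with a distinct incident $t \in T_C$ makes every pair $b(d(x,t),\dhat) - b(d(x,e),\dhat) \ge 0$, leaving a single non-negative unpaired triangle term. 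When $|V_C| = 1$ with vertex $\tilde x$, the component is a fan of triangles around $\tilde x$; for an open fan $|E_C| = |T_C| - 1$ and the argument reduces to the previous case with the extra non-negative $b(d(x,\tilde x),\dhat)$ as a bonus, while for a closed fan $|E_C| = |T_C|$ the same pairing handles all but one edge, with the remaining edge cancelled against an unpaired triangle and the vertex term contributing non-negatively.

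The main obstacle will be the combinatorial-topological step of justifying that, for an \acceptable{} $\dhat$, every connected component $C$ of $B_{\dhat}^x \cap \partial$ takes exactly one of the forms above, and that the required injection from $E_C$ into $T_C$ actually exists. This is the two-dimensional analogue of the brief remark ``no other cases are possible as the boundary is manifold'' in \cref{prop:psic}, but genuinely more delicate because a disk-like surface patch can contain many triangles and edges rather than at most two. Once this structural fact is in hand, summing the component-wise bounds yields $\Psi_s(x) \ge 0$.
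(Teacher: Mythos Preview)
Your proposal is correct and follows essentially the same approach as the paper: decompose by connected components of $B_{\dhat}^x \cap \partial$ and pair each internal edge in a component with a distinct incident triangle using \cref{rem:tri-bound} and the monotonicity of $b$, leaving only non-negative residual terms. The paper's case split is slightly coarser than yours (one triangle; two triangles and one edge; $n$ triangles, $m\le n$ edges, one vertex), and it too asserts rather than proves the combinatorial-topological claim you correctly flag as the main obstacle.
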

\begin{proof}
We show that 
\begin{align*}
\Psi_s(x) &= \sum_{t\in T\setminus x} b( d(x, t),\dhat) 
 - \sum_{e\in \Eint\setminus x} b( d(x, e),\dhat)
 + \sum_{x_2\in \Vint\setminus x}b( d(x, x_2),\dhat)
\end{align*} 
is positive for every connected component $C$ in the intersection  between $B_{\dhat}^x$ and the boundary. Note that if $C=\emptyset$, then $\Psi_s(x)=0$. We now count the number of possible triangles, edges, and vertices in $C$:
\begin{enumerate}
    \item $C$ contains only one triangle and no edges or vertices.
    \item $C$ contains only two triangles, one edge, and no vertices.
    \item $C$ contains only $n$ triangles, $m$ edged, and one vertex $\tilde x$.
\end{enumerate}
No other cases are possible as the boundary is manifold, $C$ contains only one connected component, and $B_{\dhat}^x$ can contain at most a vertex. 
For case 1, $\Psi_s(x)\ge 0$ is trivial as it contains only positive terms. 
For case 2, it follows from \cref{rem:tri-bound} and $b$ being a monotonically decreasing function, that for the vertex $\tilde e$ shared by the two triangles $t_1$ and $t_2$
\[
b( d(x, t_1),\dhat) - b( d(x, \tilde e),\dhat)= \Delta b\ge 0,
\]
therefore 
\[
\Psi_s(x) =  b( d(x, t_1),\dhat)+ b( d(x, t_2),\dhat) - b( d(x, \tilde e),\dhat)=
\Delta b+b( d(x, t_2),\dhat)\ge0.
\]
For case 3, we first note that the number of triangles $n$ is always larger or equal to the number of edges $m$. This is the case since the edges need to be in the interior (no boundary edges), and if an edge is included in $C$, then the two adjacent triangles are. Following a similar argument as for case 2, we can bound every edge barrier $b( d(x, e),\dhat)$ with one of the adjacent triangles' barriers $b( d(x, t),\dhat)$. Since $n\ge m$,
\[
\sum_{t\in T\setminus x} b( d(x, t),\dhat) 
 - \sum_{e\in \Eint\setminus x} b( d(x, e),\dhat)= \Delta b'\ge 0,
\]
and
\[
\Psi_s(x) = 
 \Delta b'+b( d(x, \tilde x),\dhat)\ge 0.
\]
\end{proof}
Finally, we show the positivity of the edge-edge energy.
\begin{proposition}\label{prop:psiw}
If $\dhat$ is \acceptable{}, then $\Psi_w \ge 0$.
\end{proposition}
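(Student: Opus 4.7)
The plan is to follow the template of Propositions \ref{prop:psic} and \ref{prop:psis}, but using Remark \ref{rem:eedge-bound} as the basic geometric estimate in place of Remarks \ref{rem:edge-bound} and \ref{rem:tri-bound}. Since $b$ has support only within $\hat d$, only edges $e_2$ and vertices $x_1$ whose distance to $e$ is at most $\hat d$ can contribute nonzero terms to $\Psi_w(e)$, and outside this neighborhood $\Psi_w(e)$ is trivially zero. First I would restrict attention to the portion $N$ of the boundary within $\hat d$ of $e$ and decompose $N$ into its connected components $C$, arguing per-component and then summing.

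The algebraic core of the argument is as follows. Fix a component $C$ and let $\tilde x \in C$ be a vertex not incident to $e$, with $\rho_C(\tilde x)$ edges incident to $\tilde x$ that lie in $C \cap (E \setminus e)$. By Remark \ref{rem:eedge-bound}, each such edge $e_2$ satisfies $d(e,e_2) \le d(e,\tilde x)$, and since $b$ is monotonically decreasing,
\[
\sum_{e_2 \ni \tilde x,\; e_2 \in E\setminus e} b(d(e,e_2),\hat d)
\;\ge\; \rho_C(\tilde x)\, b(d(e,\tilde x),\hat d).
\]
Combining with the subtraction $(\rho(\tilde x)-1)\,b(d(e,\tilde x),\hat d)$ yields a per-vertex net contribution bounded below by $\bigl(\rho_C(\tilde x)-\rho(\tilde x)+1\bigr)\,b(d(e,\tilde x),\hat d)$.

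Next I would enumerate the possible configurations of $C$ in the spirit of the previous two proofs: either $C$ consists of a single non-adjacent edge with no interior vertex (trivially positive); or $C$ is a ``fan'' of edges meeting at a single interior vertex $\tilde x$. Acceptability of $\hat d$ gives at most one vertex per connected component, which is exactly what prevents double-counting of edge barriers between distinct vertices. In the generic fan case all $\rho(\tilde x)$ incident edges belong to $C \cap (E\setminus e)$, so the inequality above closes with a nonnegative remainder $b(d(e,\tilde x),\hat d)\ge 0$.

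The main obstacle will be the borderline configuration in which some edge incident to $\tilde x$ is adjacent to $e$, and hence excluded from $E\setminus e$, so that $\rho_C(\tilde x) < \rho(\tilde x)$ and the naive bound could go negative. I expect to rule this out using the \acceptable{} hypothesis: such a configuration requires both $\tilde x$ and an endpoint of $e$ to appear in the same connected component of $B^x_{\hat d}\cap \partial$ for some point $x$ on $e$, which forces two vertices into a single component and violates acceptability. Once this case is excluded, summing the nonnegative per-component contributions yields $\Psi_w(e) \ge 0$.
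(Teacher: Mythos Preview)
Your decomposition into connected components, the two-case split (isolated edge versus fan at a single vertex $\tilde x$), and the use of Remark~\ref{rem:eedge-bound} to dominate each copy of the subtracted vertex barrier by an incident edge barrier are exactly the paper's argument. The paper never raises the borderline configuration you flag; it silently assumes all $\rho(\tilde x)$ edges incident to $\tilde x$ land in $E\setminus e$.

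Your attempt to exclude that borderline case via acceptability does not go through as written. Acceptability constrains balls $B_{\hat d}^{x}$ around \emph{points}, whereas the region relevant to $\Psi_w(e)$ is the $\hat d$-tube around the \emph{edge} $e$, and one does not reduce to the other. Concretely, take $e$ the segment from $(0,0)$ to $(10,0)$, $v=(0,0)$ an endpoint of $e$, $\tilde x=(5,\tfrac12)$, $\hat d=1$, with $e_2=(\tilde x,v)$ a surface edge. Then $d(\tilde x,e)=\tfrac12<\hat d$ so the vertex term for $\tilde x$ is active and $e_2$ is adjacent to $e$, yet $d(\tilde x,v)>5$, so no ball $B_{\hat d}^{x}$ can contain both $\tilde x$ and $v$; acceptability is not violated, and your ``two vertices in one component'' contradiction is unavailable. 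Two remarks soften this: first, when only one incident edge of $\tilde x$ is lost to adjacency your own inequality already yields $\rho_C(\tilde x)-\rho(\tilde x)+1=0$, so that case is harmless without any appeal to acceptability; second, the genuinely problematic situation is $\tilde x$ adjacent to \emph{both} endpoints of $e$ (apex of a triangle over $e$ with height $<\hat d$), which acceptability as stated also fails to exclude and which the paper's proof, like yours, does not address.
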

\begin{proof}
We show that 
\begin{align*}
\Psi_w(e) = \sum_{e_2 \in E\setminus e} b(d(e, e_2),\dhat) -
\sum_{x_1\in V \setminus e} (\rho(x_1)-1) b( d(x_1, e),\dhat)
\end{align*} 
is positive for every connected component $C$ in the intersection  between $B_{\dhat}^x$ and the boundary. We now count the number of possible edges and vertices in $C$ (excluding the trivial case $C=\emptyset$):
\begin{enumerate}
    \item $C$ contains only one edge and no vertices.
    \item $C$ contains $n_{e,\tilde x}$ edges and one vertex $\tilde x$.
\end{enumerate}
No other cases are possible as the boundary is manifold, $C$ contains only one connected component, and $B_{\dhat}^x$ can contain at most a vertex. For case 1, $\Psi_w(x)\ge 0$ is trivial as it contains only positive terms. For case 2, it follows from \cref{rem:eedge-bound} that for $\tilde x$ shared by the $n_{e,\tilde x}$  
\[
b( d(e, e_2),\dhat) - b( d(\tilde x, e),\dhat) \ge 0,
\]
for every edge $e_2\in C$. Therefore for an $e^\star \in E\setminus e$
\[
\Psi_w(e) = \sum_{e_2 \in E\setminus (e\cup
e^\star)} (b(d(e, e_2),\dhat) - b( d(\tilde x, e),\dhat))+b(d(e, e^\star),\dhat)\ge 0.
\]
\end{proof}

\subsection{Quality of the smooth approximation}
We just showed that our approximations to the actual maximum distance share the positivity property of the actual non-smooth max. By looking into the previous proofs, we can estimate how and where our approximations break down. In all cases, if the set $C$ contains more than one connected component, the approximation is poor. For instance, this can happen when $\dhat$ is \acceptable but larger than the high-frequency details of the mesh. In the following, we will focus on the case where $C$ contains only one component. If $C$ contains only one primitive (case 1 in the proofs), our approximation is trivially exact as the sum contains only one term, which coincides with the maximum.

\paragraph{Vertex-edge case} In the proof of \cref{prop:psic}, we arbitrary select $e_1$ to bound the point distance $b( d(x, \tilde x),\dhat)$. Let $e^\star=\argmin_{e_i=\{e_1, e_2\}}(d(x, e_i))$, $\bar e^\star$ the other edge and $\Delta b= b( d(x, \bar e^\star),\dhat) - b( d(x, \tilde x),\dhat)$. It is now clear to see that the error in the approximation is exactly $\Delta b$, which happens when $x$ is closer to the edge $\bar e^\star$ than to the vertex $\tilde x$ as shown in \cref{fig:corner}.

\paragraph{Vertex-triangle case} Case 2 in the proof of \cref{prop:psis} follows the same argument as the vertex-edge case. We call $t^\star$ the triangle closest to $x$, $\bar t^\star$ the other one, and $\Delta b= b( d(x, \bar t^\star),\dhat) - b( d(x, \tilde e),\dhat)$. In this case, the error is $\Delta b$, which happens when $x$ is closer to the triangle $\bar t^\star$ than to the edge $\tilde e$ (imagine an extruded version of \cref{fig:corner}). For the sake of simplicity, we exclude the case where the surface has boundaries; therefore, in case 3, we have the same number of edges and faces. Let us call $t^\star$ the triangle closest to $x$ and $\bar T$ the triangles in $C$ excluding $t^\star$. If there exists one edge $e^\star$ such that $d(x, e^\star)=d(x, \tilde x)$ (i.e., $x$ is sufficiently far from concavities), then the error is the sum of the differences between the barriers on the triangles and their adjacent edges. This is the case in concave parts of the mesh.

\paragraph{Edge-edge case} In the proof of \cref{prop:psiw}, we clearly see that our approximation is related to the errors introduced if the distances between the edges and the vertex are non-zero. This happens when more than one edge is closer to the vertex $\tilde x$ which is the case when $e$ is large or next to $\tilde x$.

\section{Time Discretization}
\label{sec:time-stepper}

After spatial discretization we start with $n$ nodal positions $x \in \R^d$ in the computational mesh, concatenated as degrees of freedom in the vector, $\bfi{x} \in \R^{d n}$. Correspondingly, we have a finite element mass matrix, $M$, and total deformation energy, $U(\bfi{x})$, defined on the material domain. 

Discretizing in time, we solve time steps variationally (see \cref{sec:numerical-solution} below). To do so we minimize discrete energies whose stationary points give each applied numerical time integration method's positional update \cite{oritz1999variational}. At time $t$, we have prior nodal positions $\bfi x^{t}, \bfi x^{t-1},...$  and velocities $\bfi v^{t}, \bfi v^{t-1},...$ . Applying a time step size of $h$, we then compute the time step update for the next nodal \emph{positions} $\bfi x^{t+1}$ as the minimization of an appropriately constructed Incremental Potential (IP)  \cite{kane2000variational}, $E\left(\bfi x, h \right)$, over valid $\bfi x \in \mathbb{R}^{d n}$ so that 
$$\bfi x^{t+1} = \argmin_{\bfi x} E\left(\bfi x, h \right).$$
For each such IP we accompany it with a velocity update function $v(\cdot)$, that correspondingly defines the time step method's velocity update, $$\bfi v^{t+1} = \bfi v(\bfi x^{t+1}),$$ or, applied per node as $v^{t+1} = v(x^{t+1})$.

As a particular example consider Newmark with $\beta = \tfrac{1}{4}$ and $\gamma = \tfrac{1}{2}$. With  applied external forces, $f_{\text{ext}}$, we have the Newmark IP,  
\begin{align}
E\left(\bfi x, h \right) & = \tfrac{1}{2} \|\bfi x - \tilde {\bfi x}\|_M^2 + \tfrac{h^2}{4} W(\bfi x), \> \text{with}\\
 \tilde {\bfi x} &= \bfi x^t + h \bfi v^t +  \tfrac{h^2}{2} M^{-1}  f{_\text{ext}} - \tfrac{h^2}{4} M^{-1}  \nabla W(\bfi x^t),
\end{align}
and its corresponding velocity update
$$
{\bfi v}(\bfi x) = \tfrac{2}{h} (\bfi x - \bfi x^t) - \bfi v^t, 
$$
with $W(\bfi x)$ the sum of internal ($U$), barrier (see \cref{sec:contact-discretization}) and (when applicable) friction (see \cref{sec:friction-discretization}) energies.

\section{Friction Discretization}
\label{sec:friction-discretization}

Following our contact barrier discretization we next smoothly approximate friction forces so the we can continue to employ Newton-type methods to solve contact with friction and numerically integrate the corresponding \emph{contact-coupled} energy over the curve and surface domains to form a dissipative potential.

We first construct discrete friction potentials corresponding with piecewise-linear discretizations corresponding to our barrier discretizations. On the polyline and triangular surface meshes they are respectively 
\begin{align*} 
\frac{1}{2}&\sum_{\bar x \in \bar V} w_{\bar x} D\Big( v\big[x(\bar x)\big], v\big[\ell_c[x(\bar x)]\big], x(\bar x), \ell_c[x(\bar x)] \Big) \>\> \text{with} \>\>
\ell_c[x_1] = \argmin_{x_2 \in e, \ e\in E\setminus x_1} d(x_1,x_2), 
\end{align*}
and 
\begin{align*} 
\frac{1}{2}&\sum_{\bar x \in \bar V} w_{\bar x} D\Big( v\big[x(\bar x)\big], v\big[\ell_s[x(\bar x)]\big], x(\bar x), \ell_s[x(\bar x)] \Big) \>\> \text{with} \>\>
\ell_s[x_1] = \argmin_{x_2 \in t, \ t \in T\setminus x_1} d(x_1,x_2). 
\end{align*}

Following the above smooth approximation of the max operator in the contact barrier via differences, our corresponding curve and surface friction energies are then respectively
\begin{align*}
\Phi_c(x) & =\sum_{e \in E \setminus x} D(v(x), v(e, x), x, p(e, x)) \quad-\sum_{x_2 \in V_{i n t} \setminus x} D\left(v(x), v\left(x_2\right), x_1, x_2\right) \\
& \approx D\left(v(x), v\left(\ell_e[x(\bar{x})]\right), x, \ell_e[x(\bar{x})]\right),
\end{align*}
and
\begin{align*}
\Phi_s(x) & =\sum_{t \in T \setminus x} D(v(x), v(t, x), x, p(t, x))-\sum_{e \in E_{i n t} \setminus x} D(v(x), v(e, x), x, p(e, x)) \\
& +\sum_{x_2 \in V_{i n t} \setminus x} D\left(v(x), v\left(x_2\right), x, x_2\right) \\
& \approx D\left(v(x), v\left(\ell_s[x]\right), x, \ell_s[x]\right),
\end{align*}
where $v(s, x)$ and $p(s, x)$ respectively return the discrete velocity and position of the closest point in a simplex $s$ (edge or face) to a point $x$. In turn, the final, discrete, dissipative friction potential is
$$D_c(V) = \sum_{x \in V} w_x \Phi_c(x),$$ 
for curves, and 
$$D_s(V) = \sum_{x \in V} w_x \Phi_s(x),$$ for surfaces, where we simplify with $w_x = w_{\bar x}$ defined appropriately, per domain, as covered above. Corresponding total friction forces are then
$$f_c(V) = -\sum_{x \in V} w_x \frac{\partial \Phi_c(x)}{\partial v},$$ 
and
$$f_s(V) = - \sum_{x \in V} w_x \frac{\partial \Phi_s(x)}{\partial v}.$$

\section{Numerical Solution}
\label{sec:numerical-solution}

As covered in \cref{sec:time-stepper}, after discretization each simulation step solution (dynamic, quasistatic, or static) is generated by \emph{locally} minimizing the applied incremental potential (IP), $E(\bfi x, h)$. To minimize $E$ we apply a Projected Newton solver customized for handling barrier potentials. Projected Newton (PN) methods are second-order unconstrained optimization strategies for minimizing nonlinear, nonconvex functions where the Hessian may be indefinite. At each Newton iteration, we project all \emph{local} energy stencils' (including barrier and friction) Hessians to the cone of symmetric positive semi-definite (PSD) matrices prior to assembly. 

While our barrier energies diverge at contact, this alone does not guarantee that a Newton iteration process will not violate the distance constraints ($d>0$) for all possible contact pairs. Standard line search~\cite{nocedal2006numerical}, e.g., back-tracking with Wolfe conditions, can find an energy decrease in configurations that have passed through intersection, resulting in a step that takes the geometry out of the admissible set. To ensure feasibility for all position updates internal to the solver we apply a continuous, intersection-aware line search filter for 3D meshes. In each line search we first apply a continuous collision detection (CCD)~\cite{li2021codimensional} to conservatively compute a large, but always feasible, step size along the descent direction. We then apply back-tracking line search from this step size upper bound to obtain energy decrease. CCD then certifies that each step taken is always valid. 
When we apply friction we follow Li et al.'s\ \cite{Li2020IPC} lagged-iteration method and supplement the incremental potential over successive iterated Newton solves, per time step, with a pre-scaled pseudo-potential energy that holds contact-force magnitudes and sliding projections fixed from the prior Newton solve, until convergence with current contact forces and projections. When we apply barrier-based energy densities for our elasticity potential, $U$, e.g., neo-Hookean, we combine an inversion-aware line search filter~\cite{smith2015bijective} that additionally pre-filters the search direction for a large but always inversion-free step size. In combination this guarantees that every step of every position change in the Newton iteration process (and so simulation) applies an intersection- and (when desired)  inversion-free update.

\paragraph{Termination} 
For termination of the solver we check convergence with the infinity norm of the Newton search direction (Newton decrement) scaled by time step (but \emph{unscaled} by line-search step size). Specifically we solve each time step's barrier IP to an accuracy satisfying $\frac{1}{h} \| H^{-1} \nabla E(x) \|_\infty < \epsilon_d$. This provides affine invariance and a characteristic measure using the Hessian's natural scaling as metric. Accuracy is then directly defined by $\epsilon_d$ in physical units of velocity (and so is independent of time-step size applied) and consistently measures quadratically approximated distance to local optima across examples with varying scales and conditions.

\paragraph{Solution accuracy} 
Each such numerically converged time-step solution satisfies accuracy criteria for choice of applied numerical time integration method.
Discrete \emph{momentum balance} is directly satisfied as standard after convergence. For example, in a simple illustrative case with implicit Euler we have
\begin{align*}
\label{eq:ie_em}
\nabla_x  E(\bfi x,h) = 0 \implies M\left(\frac{x-\hat{x}}{h^2}\right) = - \nabla W(x),
\end{align*}
Comparable discrete momentum balance follows when we apply alternate time integration methods, e.g. implicit Newmark.
Here contact forces, per surface vertex (and, when applied, edge) stencils $k$ are then  
\begin{align*}
- w_k \frac{\partial \Psi_k(x_k)}{\partial x_k}. 
\end{align*}
In turn \emph{positivity}, of these forces is covered in detail in Section\ \ref{sec:positivity} above.
Line-search filtering then guarantees \emph{admissibility} (non-intersection) and, when applicable, for barrier-type elasticity energy densities, \emph{global injectivity}. Finally, our barrier definition ensures that a \emph{discrete complimentarity} is always satisfied as contact forces can not be applied at distance more than $\hat{d}$ away.

\section{Benchmark Evaluation}

We begin our evaluation with two benchmark problems treating the transient impact of linearly elastic bars in one dimension. Importantly, both problems are equipped with analytic solutions. This allows us to compare results with prior methods analyzed by Doyen and colleagues~\cite{doyen2011time} and to also demonstrate convergence of our contact model to known elastodynamic impact solutions. 

Following Doyen et al.~\cite{doyen2011time}, both problems resolve the dynamics of a one-dimen\-sional linearly elastic bar of length $L=\SI{10}{\meter}$, Young's modulus $E=\SI{900}{\newton}$, and density $\rho=\SI{1}{\kilo\gram/\meter}$, initialized (undeformed) at a height of $h_0=\SI{5}{\meter}$ above a rigid ground. Each bar is spatially discretized with a uniform mesh size of $\Delta x$ via linear finite elements. With this common framework there are then two benchmark problems. 

The first, an \emph{impact} problem, resolves a single impact of an elastic bar by initializing the bar's velocity to $v_0 = \SI{-10}{\meter/\second}$ and eliminating gravitational acceleration ($g_0 = 0$). This benchmark has been widely applied in prior analyses and enables comparison of the numerical oscillation artifacts generated by differing contact models~\cite{doyen2011time}.

The second, a \emph{bouncing} problem, resolves a periodic sequence of elastic bar impacts and free-flights. To do so, with the above chosen material parameters, we initialize the bar at rest ($v_0 =0$) with a gravitational acceleration of $g_0 = \SI{-10}{\meter/\second^2}$. This obtains a periodic trajectory of alternating contacts and free-flight for the bar described by an analytic solution. This benchmark, introduced by Doyen et al.~\cite{doyen2011time}, further enables us to analyze the energy evolution and longer-term trajectories generated by contact models over sequential impacts. 

Below we cover the results of our benchmark testing in detail. Here, we first quickly summarize our key takeaways. In brief we note that the following tests demonstrate that \EIPC \emph{qualitatively} follows the the displacement and contact pressure behavior of a penalty-based contact model. However, unlike penalty-based methods, we confirm that \EIPC additionally provides interpenetration-free trajectories independent of choice of discretization and contact-stiffness parameters. We then show convergence of our model under refinement to the benchmarks' analytical solutions. To our knowledge, these are the first results to demonstrate this convergence.

\subsection{Comparison with Doyen et al.'s benchmark}

\label{sec:1d}
\label{sec:1d_impact}
\label{sec:1d_bounces}

For a direct, side-by-side comparison with Doyen et al.'s evaluation we begin by applying implicit Newmark ($\beta=\tfrac{1}{4}$, $\gamma=\tfrac{1}{2}$) time integration, spatial discretization with $\Delta x = \SI{0.1}{\meter}$, and calculate time steps $h = \nu_C \> \Delta x / \sqrt{E/\rho}$ with a Courant number of $\nu_C = 1.5$. We comparably set our contact model with $\hat{d}=\Delta x$ and $\kappa = 0.1 E$, and so effectively treat the contact barrier as an additional hyperelastic potential. 

We summarize simulation results for the \emph{impact} and \emph{bouncing} problems in \cref{fig:1d_impact,fig:1d_bounces} respectively. Here we observe that \EIPC generates displacement (bottom node), contact pressure, and energy trajectories with closely comparable profiles, and so qualitatively similar error behaviors, to penalty methods in both magnitude and over time. 

However, in these benchmarks we also see key differences between penalty methods and \EIPC. As demonstrated in the top-left plot of \cref{fig:1d_impact} we observe \EIPC preserves a penetration-free (more generally interpenetration-free) trajectory independent of choice of contact stiffness. This is in contrast to penalty methods where penetration errors are uncontrollable for a fixed contact stiffness. Here \EIPC's displacement error is controllable with a curve remaining above the analytic solution during contact by no more than $\hat{d}$. Following this observation, we next evaluate \EIPC's behavior over variations in the contact model's threshold and stiffness, as well as for alternate choices of numerical time integration.

\begin{figure}[ht]
    \centering
    \parbox{0.48\linewidth}{
        \centering
        {\small Penalty method~\cite{doyen2011time}}\\\vspace{5pt}
        \includegraphics[width=0.49\linewidth]{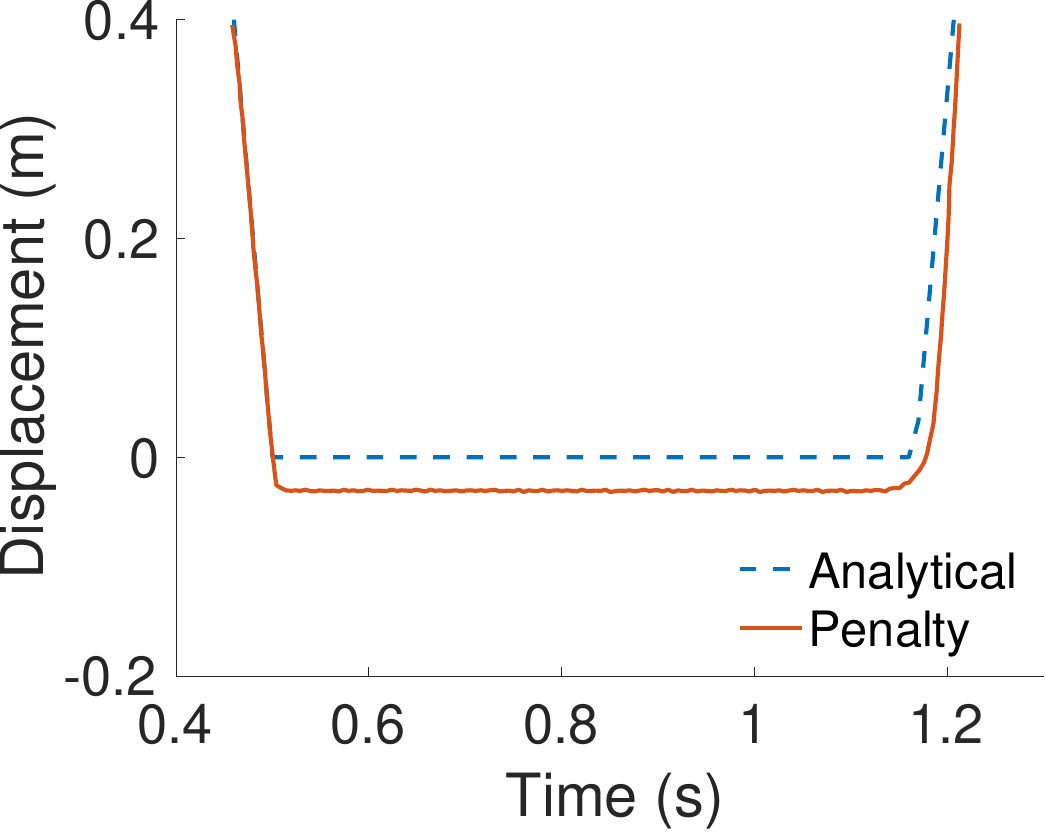}
        \includegraphics[width=0.49\linewidth]{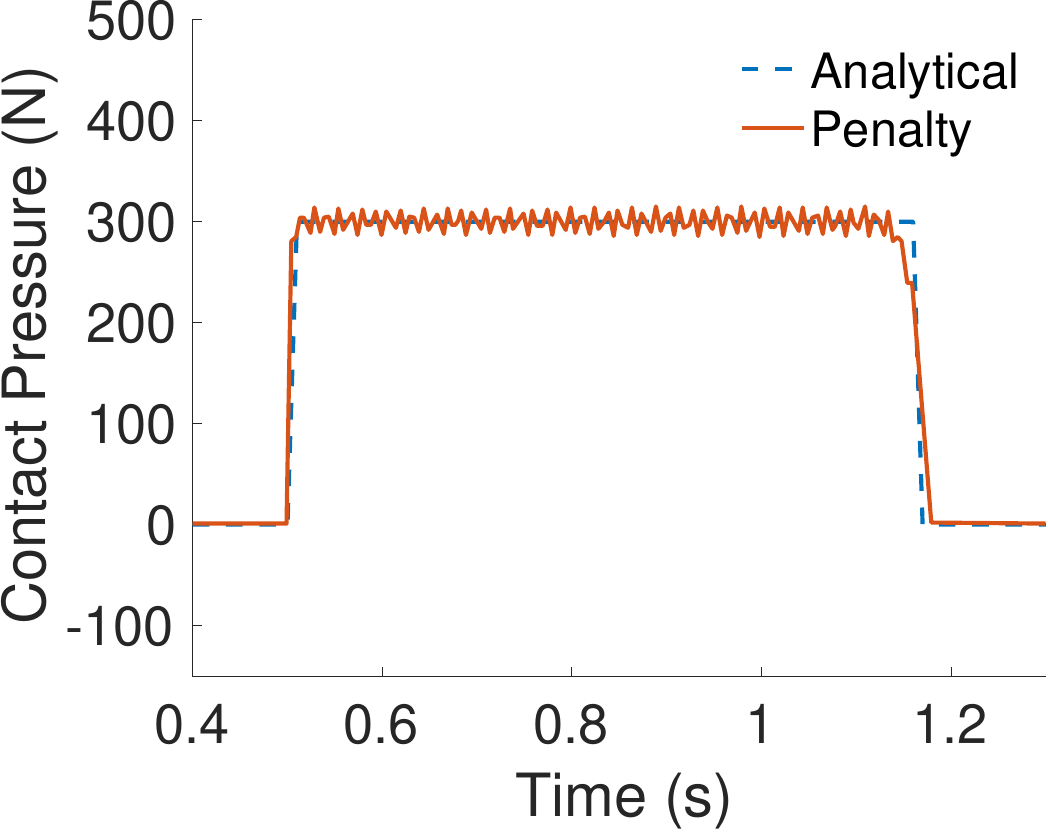}
    }
    \hspace{0.02\linewidth}
    \parbox{0.48\linewidth}{
        \centering
        {\small Ours with $\hat{d}=\Delta x$ and $\kappa = 0.1Y$}\\\vspace{5pt}
        \includegraphics[width=0.49\linewidth]{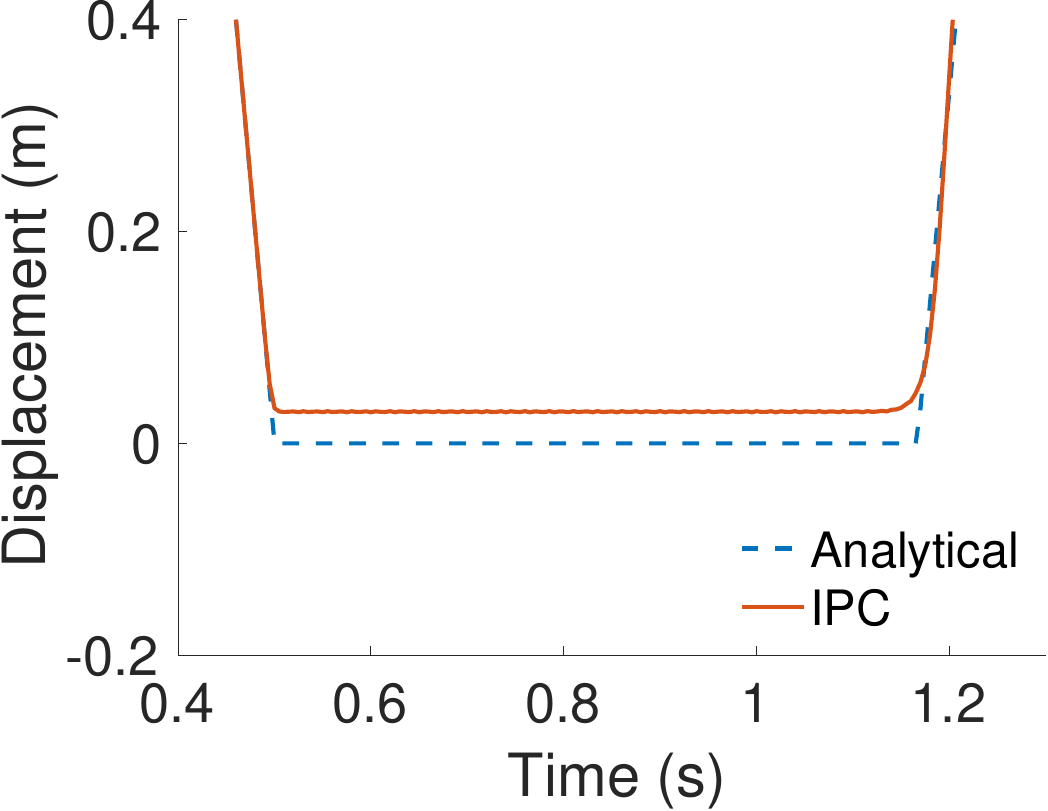}
        \includegraphics[width=0.49\linewidth]{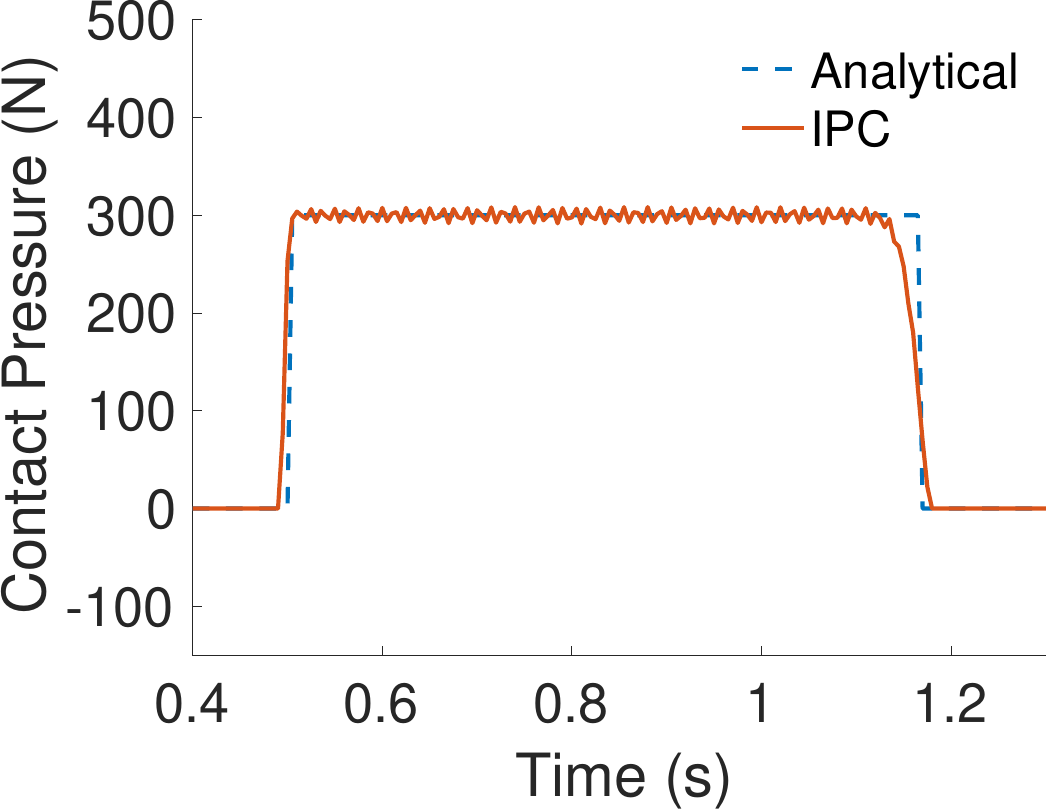}
    }
    \caption{
    \textbf{Impact of an elastic bar.}
    Comparison between the penalty-based contact method in \citetdoyen{} (left) our method (right), both with Newmark time integration and same other settings.}
    \label{fig:1d_impact}
\end{figure}

\subsection{Varying barrier stiffness, threshold, and time-integration}

While there is no need to change \EIPC's contact model parameters in order to avoid interpenetration, reducing the threshold parameter $\hat{d}$ (and/or the contact stiffness $\kappa$) will improve the complementarity accuracy (decreased gap at contact) in simulation results. At the same time varying these parameters has direct implications for the contact-pressure oscillations produced, and so on the stability of the solutions obtained. In turn it is then also important to consider choice of the time integration method applied. 

In \cref{fig:1d_impact_diffStiff}, top and middle, we see that varying \EIPC's contact stiffness by $0.1\times$ or $10\times$ introduces significantly smaller variations in contact pressure when compared to the large jumps obtained by varying contact stiffness with the  penalty method (as observed in \citetdoyen{}). However, varying $\hat{d}$ in the \EIPC model similarly reproduces comparably large contact pressure oscillations to varying stiffness in the penalty model. This is because the \EIPC barrier has local support in the distance range of $(0, \hat{d})$, while the sharpness of this potential is more sensitive to changes in $\hat{d}$ than $\kappa$.

\begin{figure}[ht]
    \centering
    \includegraphics[width=0.33\linewidth]{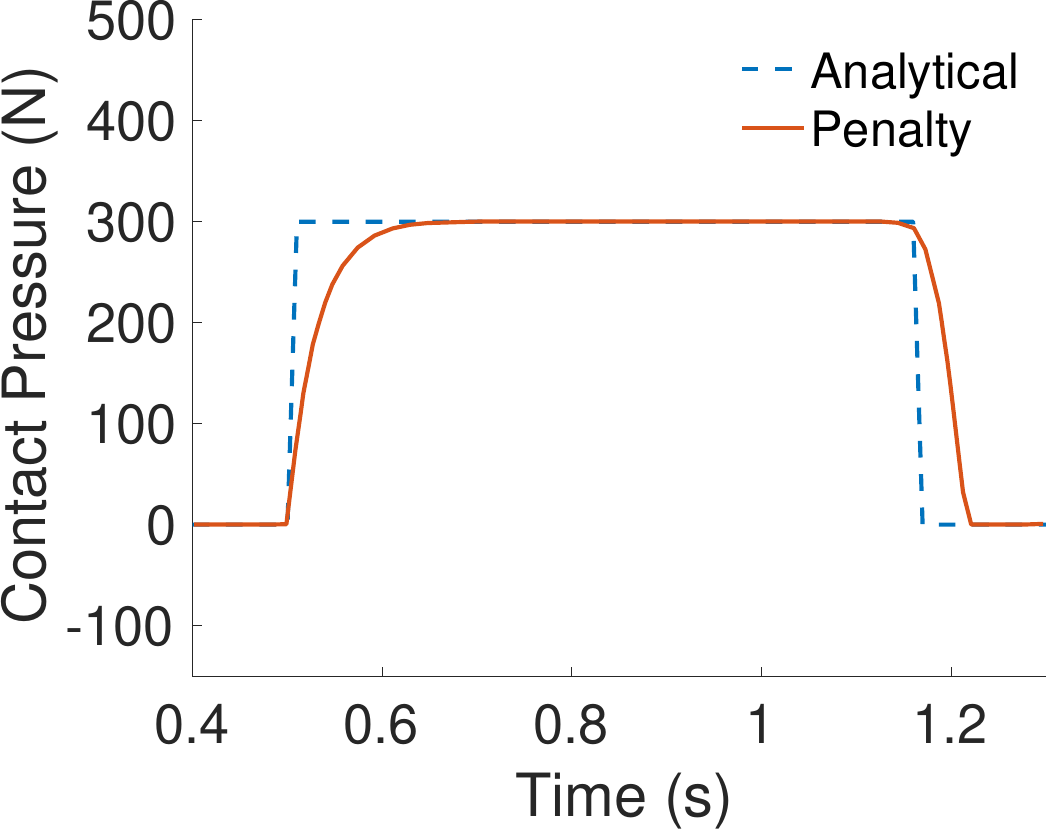} \hspace{0.2cm}
    \includegraphics[width=0.33\linewidth]{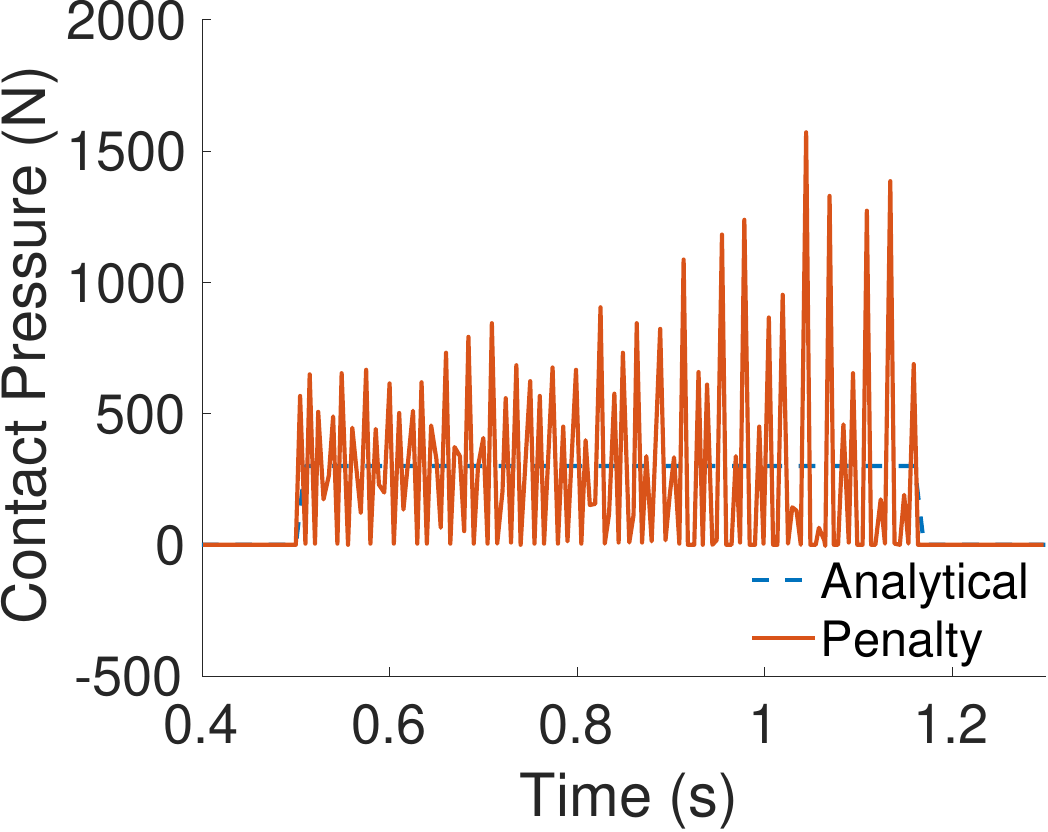}
    \\{\small Penalty method~\cite{doyen2011time} with $0.1\times$ (left) and $10\times$ (right) stiffness}\\
    \vspace{0.3cm}
    \includegraphics[width=0.33\linewidth]{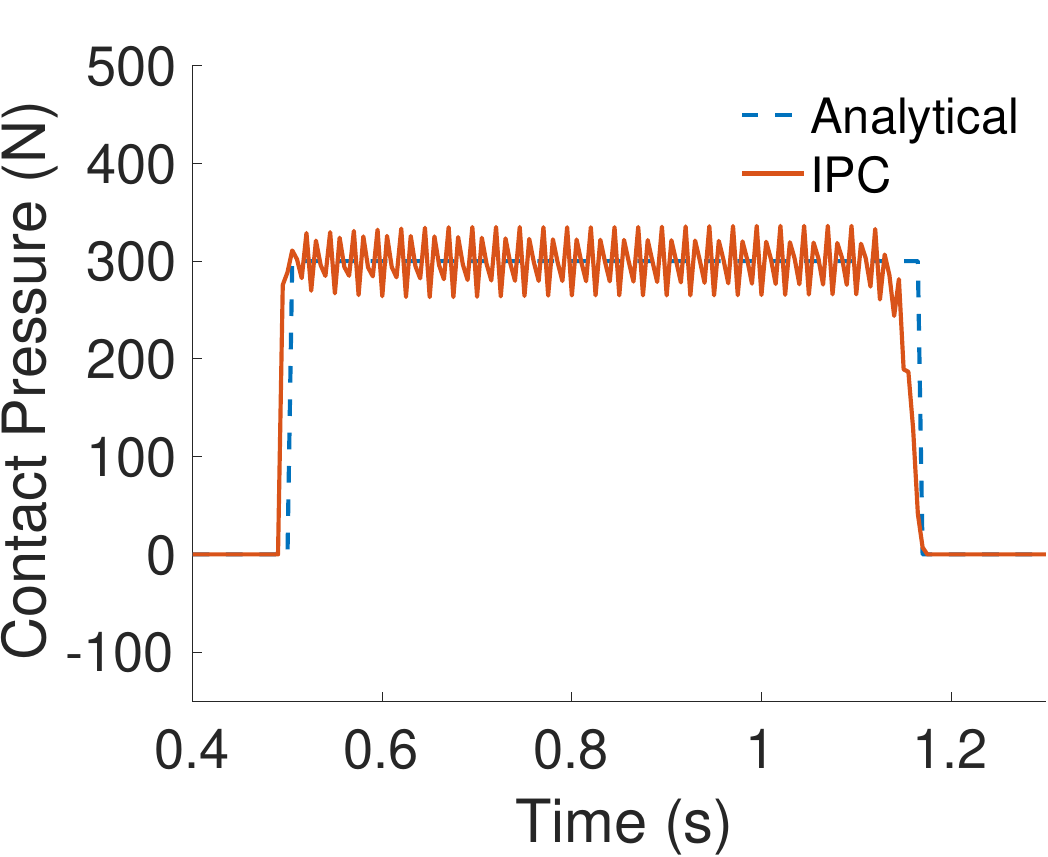} \hspace{0.2cm}
    \includegraphics[width=0.33\linewidth]{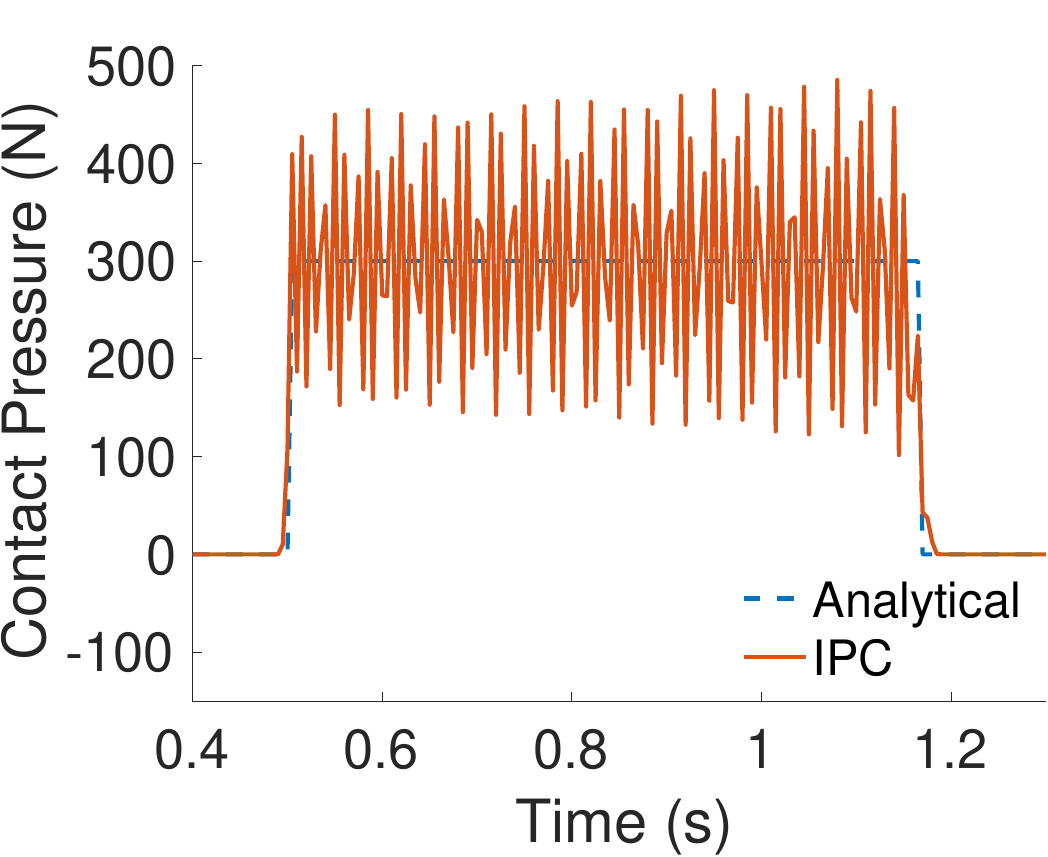}
    \\{\small Ours with $\kappa=Y$ (left) and $\kappa=0.01Y$ (right)}\\
    \vspace{0.3cm}
    \includegraphics[width=0.33\linewidth]{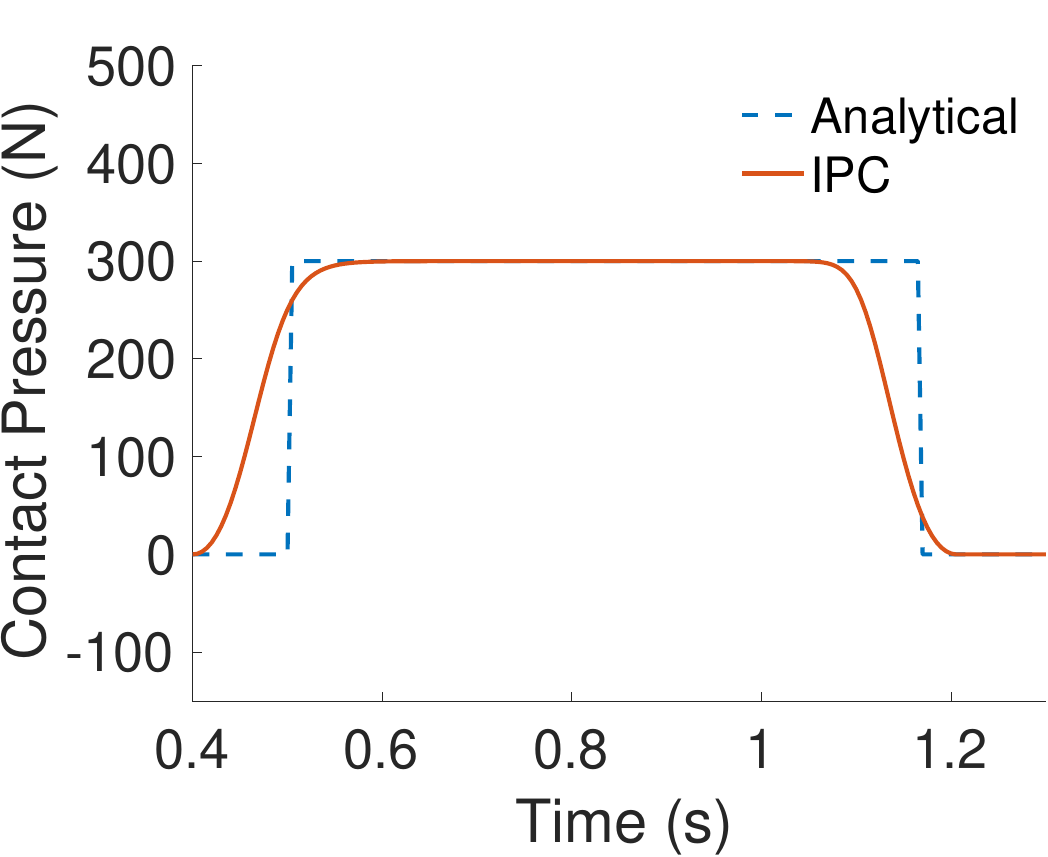} \hspace{0.2cm}
    \includegraphics[width=0.33\linewidth]{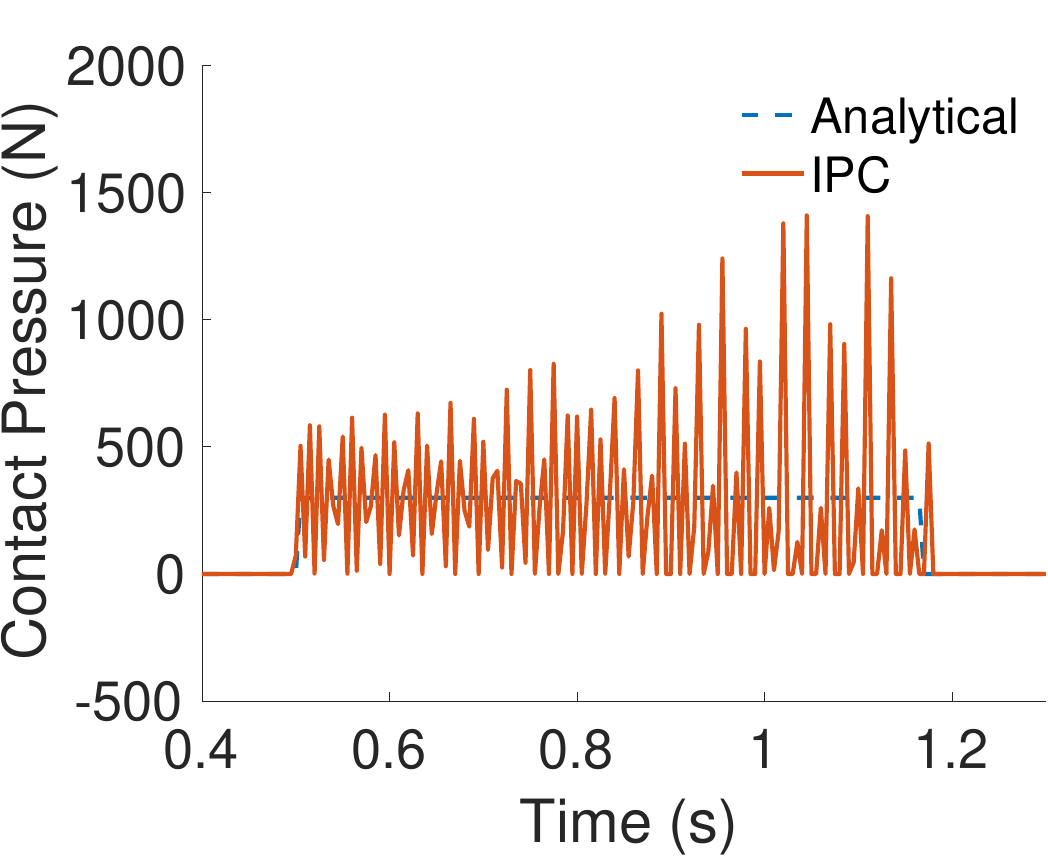}
    \\{\small Ours with $\hat{d}=10\Delta x$ (left) and $\hat{d}=0.1\Delta x$ (right)}\\
    \caption{
    \textbf{Impact of an elastic bar -- different contact stiffness.}
    From top to bottom: penalty method with different stiffness~\cite{doyen2011time}, our method with different $\kappa$, and our method with different $\hat{d}$, all with Newmark time integration and same other settings. Results in the right column has sharper energy than the left column, and so less smooth contact pressure profile.}
    \label{fig:1d_impact_diffStiff}
\end{figure}

This increase in generated contact pressure oscillation, as we decrease $\hat{d}$, implies an important tradeoff. We obtain improved gap accuracy (given by smaller $\hat{d}$) at the cost of a sharper contact potential. In turn, generated pressure oscillations are artifacts from time-integration with these increasingly sharp potentials. Simulations are then significantly improved if we step away from employing marginally stable time integrators like implicit Newmark with $\beta=\tfrac{1}{4}$, $\gamma=\tfrac{1}{2}$. For example, switching to A-stable integrators like BDF-2 and implicit Euler (IE) provides smooth contact pressures with reduced $\hat{d}=0.1\Delta x$ (see \cref{fig:1d_impact_ie_bdf2}) \emph{without} decreasing timestep size. If we then additionally lower the time step size of the IE solution by $0.1\times$ for less numerical dissipation, we see the contact pressure profile (\cref{fig:1d_impact_ie_bdf2} middle) then closely follows many discretizations proposed for improved stability (i.e., see methods 4.1, 4.3, 4.6, 4.7, 6.2, 7.1 in~\cite{doyen2011time}). Similarly, in terms of accuracy, we see BDF-2 generates a solution more than $2\times$ closer to the analytical solution than IE, with significantly less numerical dissipation of the total system's energy.

\begin{figure}[ht]
    \centering
    \parbox{0.3\linewidth}{
        \centering
        {\small \textbf{Implicit Euler}\\$\hat{d}=0.1\Delta x$}\\
        \includegraphics[width=\linewidth]{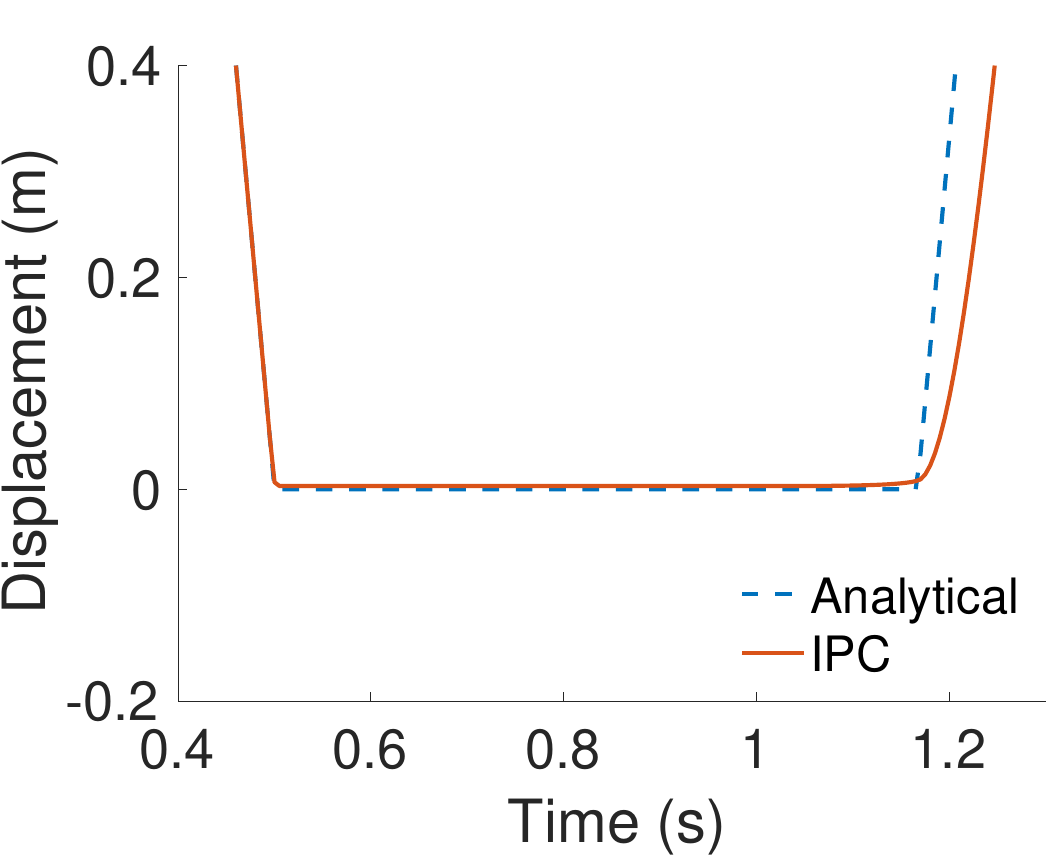}\\
        \includegraphics[width=\linewidth]{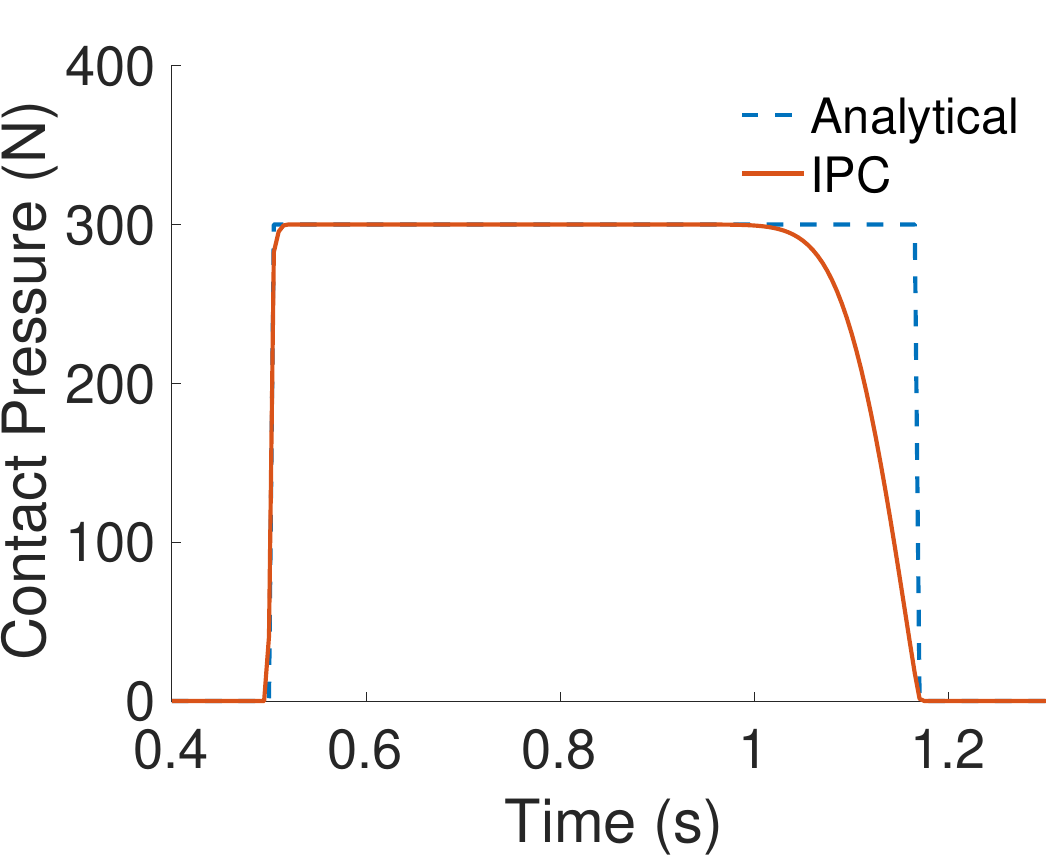}
    }
    \hspace{0.02\linewidth}
    \parbox{0.3\linewidth}{
        \centering
        {\small \textbf{Implicit Euler}\\$\hat{d}=0.1\Delta x$ and $0.1h$}\\
        \includegraphics[width=\linewidth]{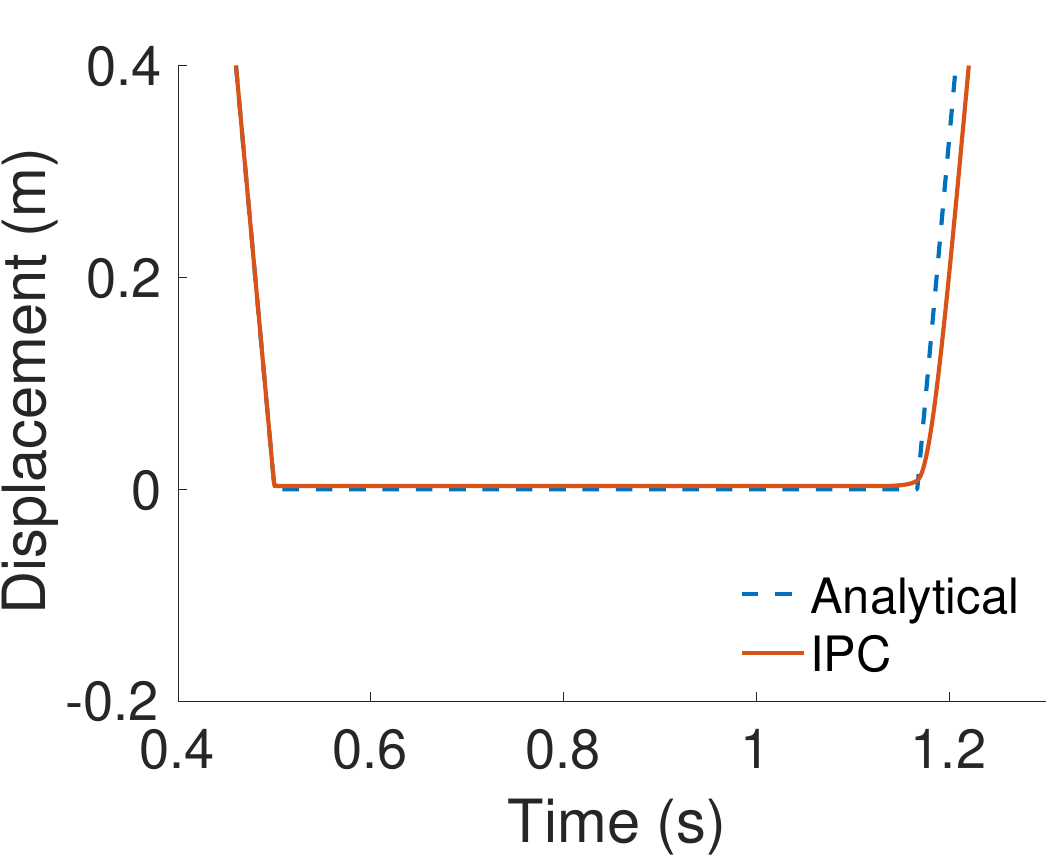}\\
        \includegraphics[width=\linewidth]{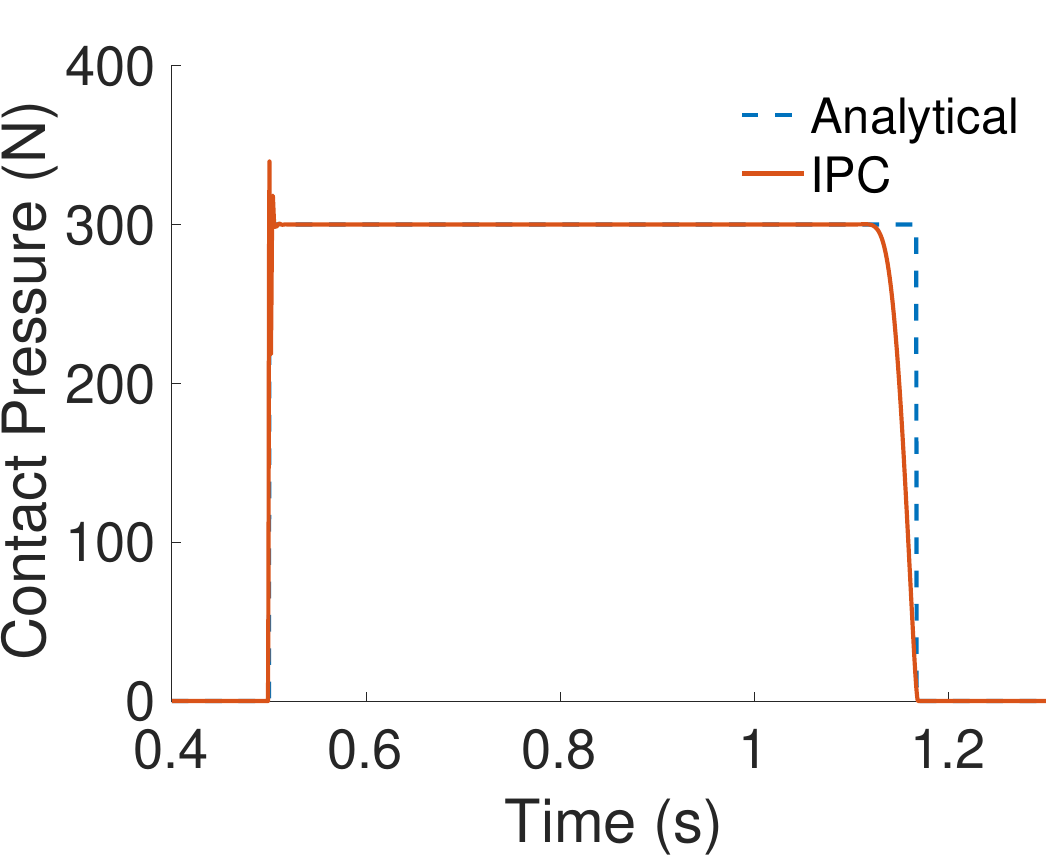}
    }
    \hspace{0.02\linewidth}
    \parbox{0.3\linewidth}{
        \centering
        {\small \textbf{BDF-2}\\$\hat{d}=0.1\Delta x$}\\
        \includegraphics[width=\linewidth]{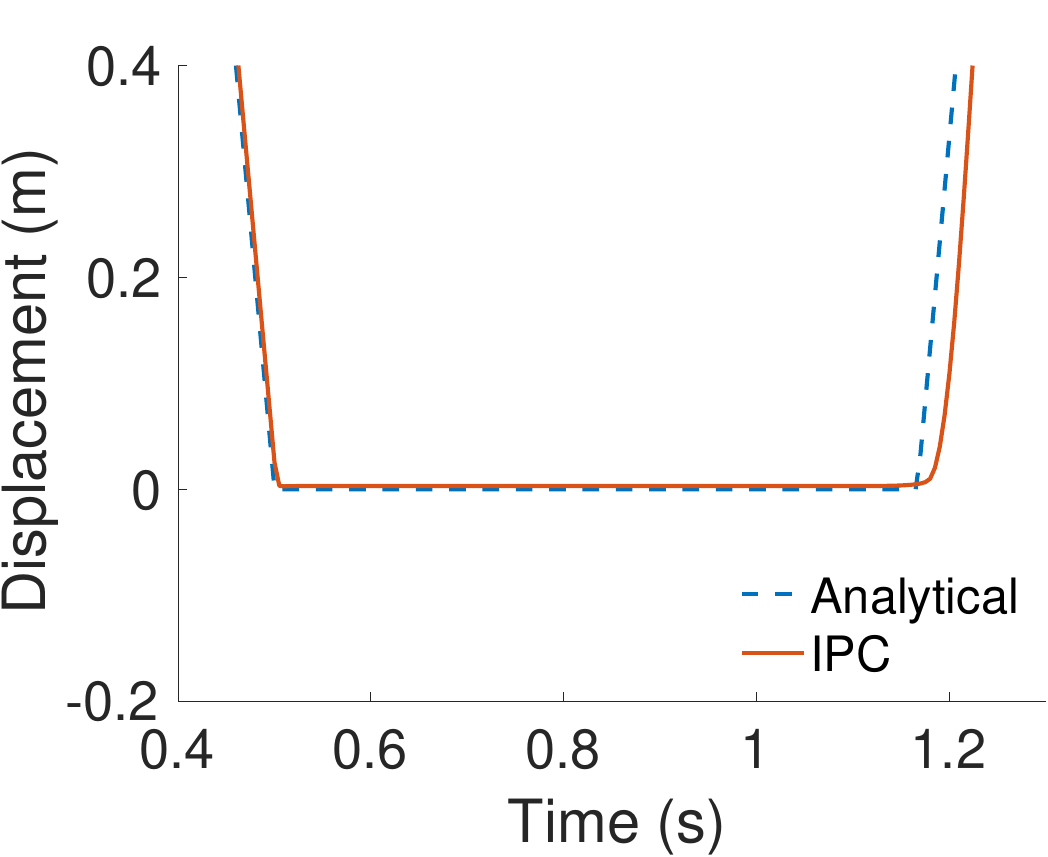}\\
        \includegraphics[width=\linewidth]{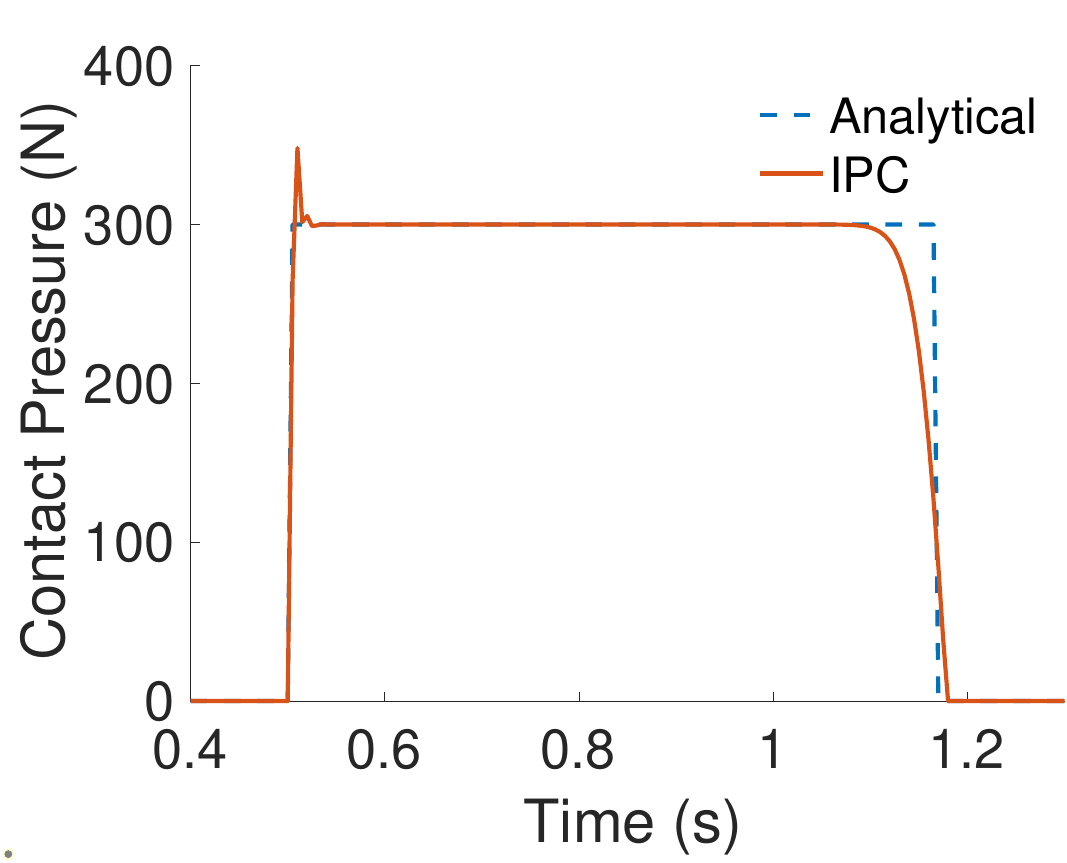}
    }
    \caption{
    \textbf{Impact of an elastic bar -- different time discretizations.}
    Our results with implicit Euler (left two columns) and BDF-2 (right column) time integration. Left: $0.1\times$ smaller $\hat{d}$, middle: $0.1\times$ smaller $\hat{d}$ with $0.1\times$ smaller $h$, right:  $0.1\times$ smaller $\hat{d}$.}
    \label{fig:1d_impact_ie_bdf2}
\end{figure}

\begin{figure}[ht]
    \centering
    \parbox{0.48\linewidth}{
        \centering
        {\small Penalty method~\cite{doyen2011time}}\\\vspace{5pt}
        \includegraphics[width=0.49\linewidth]{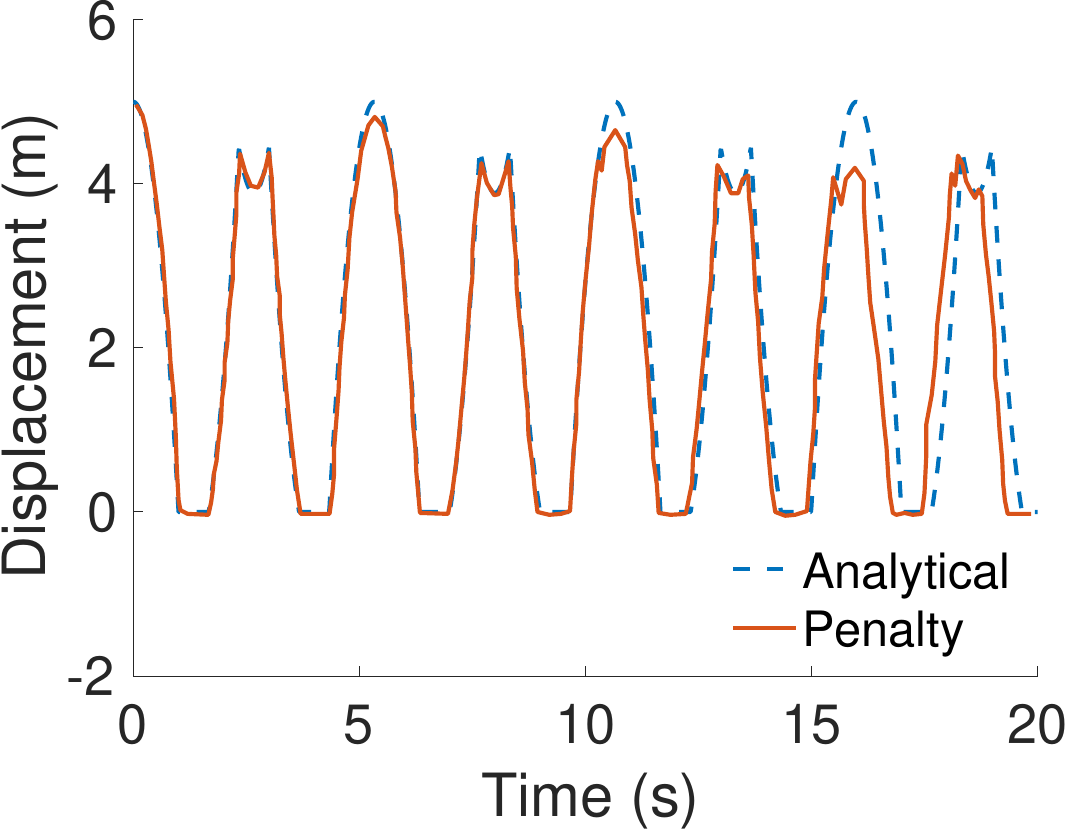}
        \includegraphics[width=0.49\linewidth]{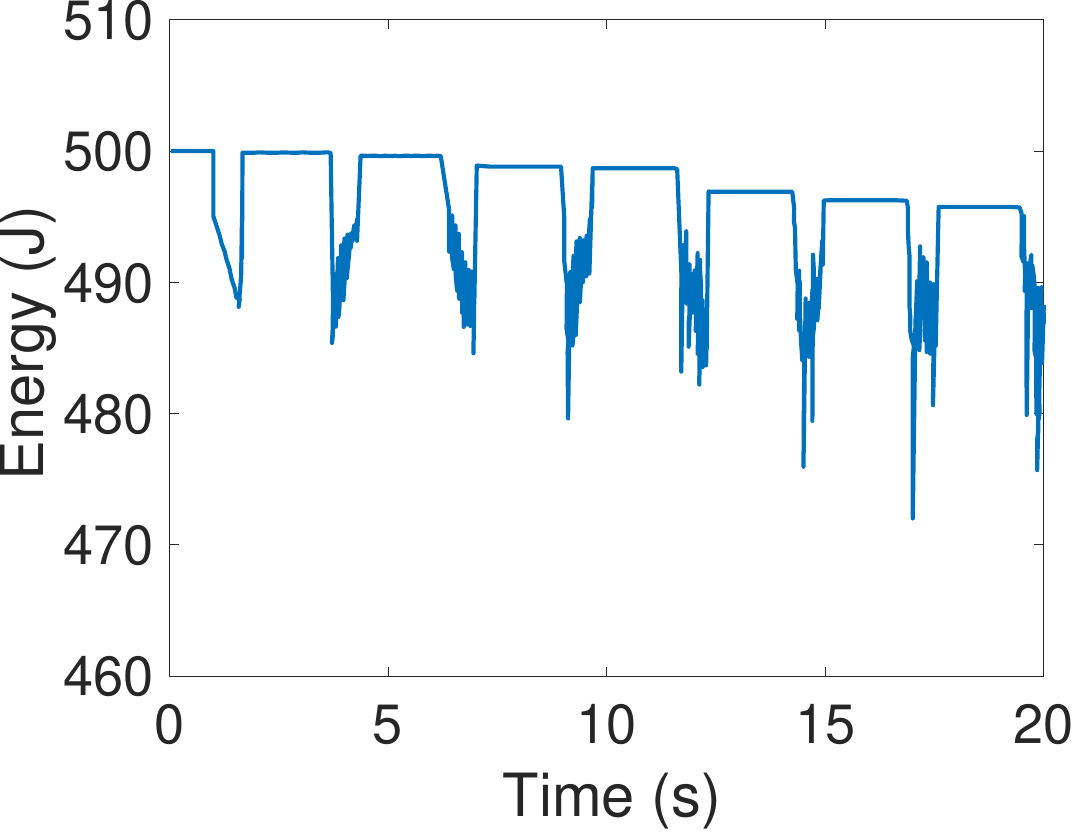}
    }
    \hspace{0.02\linewidth}
    \parbox{0.48\linewidth}{
        \centering
        {\small Ours with $\hat{d}=\Delta x$ and $\kappa = 0.1Y$}\\\vspace{5pt}
        \includegraphics[width=0.49\linewidth]{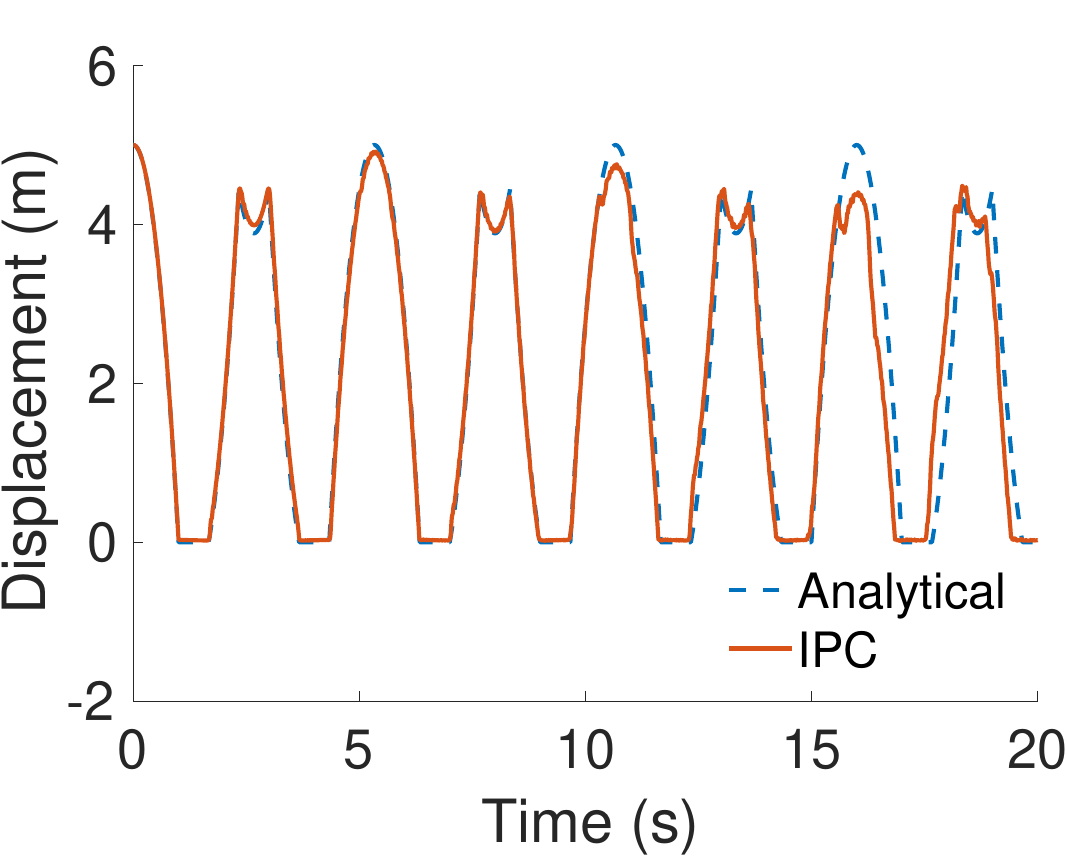}
        \includegraphics[width=0.49\linewidth]{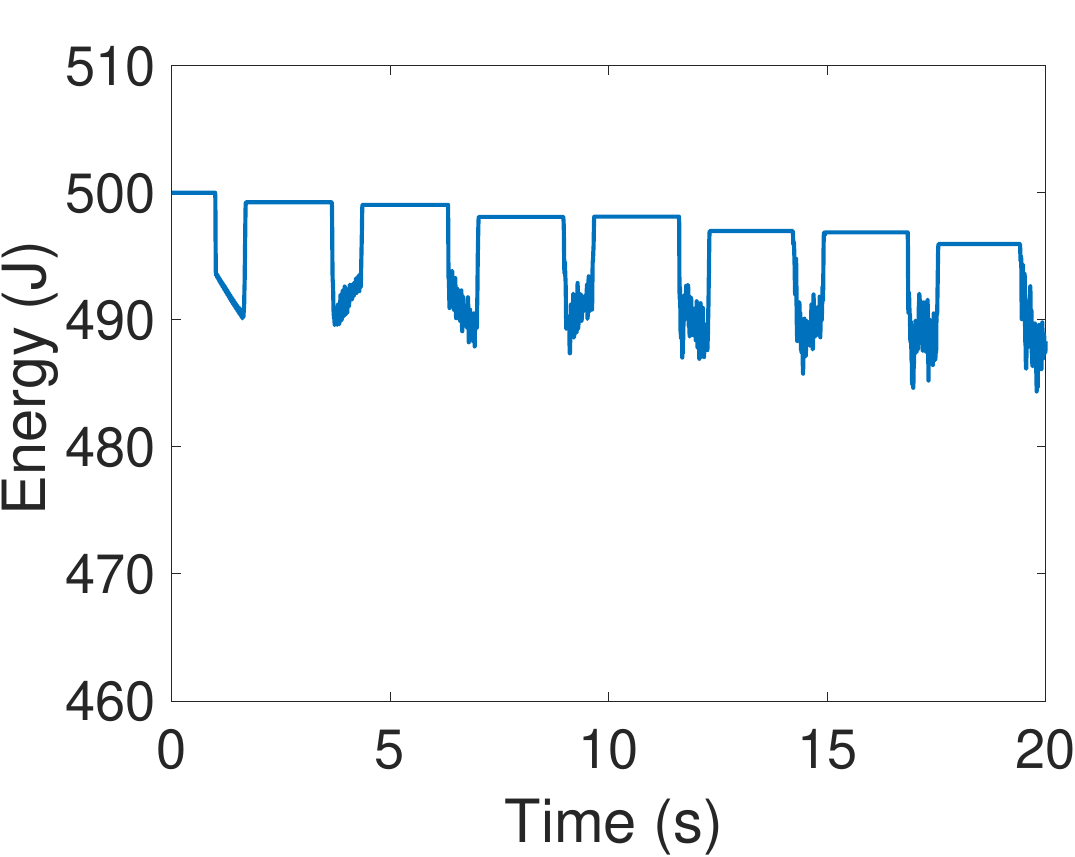}
    }
    \caption{
    \textbf{Bounces of an elastic bar.}
    Comparison between the penalty based contact method in \citetdoyen{} (left) our method (right), both with Newmark time integration and same other settings.}
    \label{fig:1d_bounces}
\end{figure}

\subsection{Refinement Analysis} 

As covered above, reduction in $h$ smooths the barrier for decreasing $\hat{d}$, and so improves stability. This is unsurprising as the contact pressure oscillations we observe are artifacts generated by refining the spatial discretization $\hat{d}$ without accompanying temporal refinement. Correspondingly, to improve accuracy, both of \EIPC's spatial parameters, $\Delta x$ and $\hat{d}$, must jointly be refined with $h$. Here we next analyze convergence under refinement for both the \emph{impact} problem and \emph{bouncing} problems, refining by successively halving $\Delta x$, setting the relationship to the threshold as $\hat{d} =  c_B \Delta x$ (with parameter $c_B$) and time step (as above) with, $h = \nu_C\tfrac{\Delta x}{\sqrt{E/\rho}}$.

\paragraph{Impact Problem}

In \cref{fig:1d_impact_conv}, we consider implicit Newmark time integration for the impact problem with $c_B = 4$, and observe that both displacement and contact pressure converge to the analytic solution. Displacement converges linearly while contact pressure converges sublinearly (rate of $\sim 0.5$). Both rates follow reasonable expectations with contact gap error decreasing linearly w.r.t. $\hat{d}$ and contact pressure given by the barrier energy derivative.
Next we consider BDF-2 time integration with $c_B = 4$. To support BDF-2 (wider time stencil) we provide consistent initialization of displacement and velocity history with the analytic solution at time $t=-h$. In \cref{fig:1d_impact_conv_BDF2}, we see BDF-2 provides comparable convergence to IE for both the displacement and contact pressure. We also note that if we decrease our stability criterion for $\hat{d}$ to $c_B = 1$, convergence rates significantly degrade for both Newmark and BDF-2 due to lack of smoothness in the barrier.

\paragraph{Bouncing Problem} 

Next in \cref{fig:1d_bounces_conv}, to look at longer, time-varying behavior with multiple impacts we consider implicit Newmark time integration for the bouncing problem with $c_B = 8$. Here we observe both displacement (bottom node) and total energy converge linearly to analytical solution. To achieve comparable (linear) convergence with BDF-2 (\cref{fig:1d_bounces_conv_BDF2}) time-integration in this problem requires setting $\nu_C = 0.75$ so that respective time step sizes are halved suggesting that Newmark's improved energy conservation helps in capturing the longer-term behavior of repeated elastic bouncing. 

 \paragraph{Comparison to Constraint-based IPC}
 
 In contrast to constraint-based contact model of the original IPC~\cite{Li2020IPC} formulation, \EIPC provides a consistent discretization of the contact potential in the smooth setting.
Here we consider the resulting, improved convergence behavior for {\EIPC} by considering the original IPC's behavior on the \emph{impact} problem benchmark (same settings as \EIPC above). We begin with \citetIPC{} original model which augments the unconstrained incremental potential with an uncalibrated barrier energy. Instead the barrier stiffness is adaptively and automatically updated to gain improved numerical conditioning of the Hessian. For this original formulation we observe no convergence for both displacement (order=$0.2334$) and contact pressure (order=$0.0823$) in the impact problem. Alternately, if we update the original IPC model to keep the contact barrier stiffness fixed (stiffness selected to match \EIPC simulation at coarsest resolution), convergence improves (displacement order=$0.6997$, contact pressure order=$0.2745$) but is still far from satisfactory.

\begin{figure}[ht]
    \centering
    \includegraphics[width=0.24\linewidth]{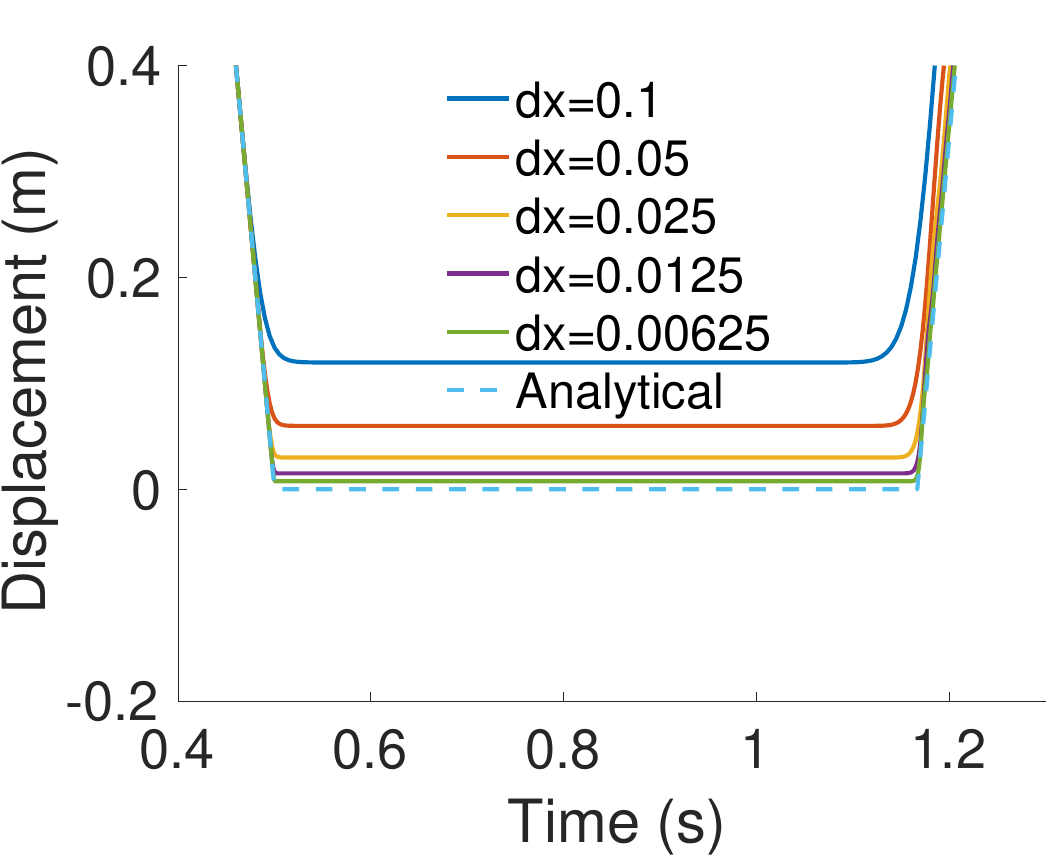} 
    \includegraphics[width=0.24\linewidth]{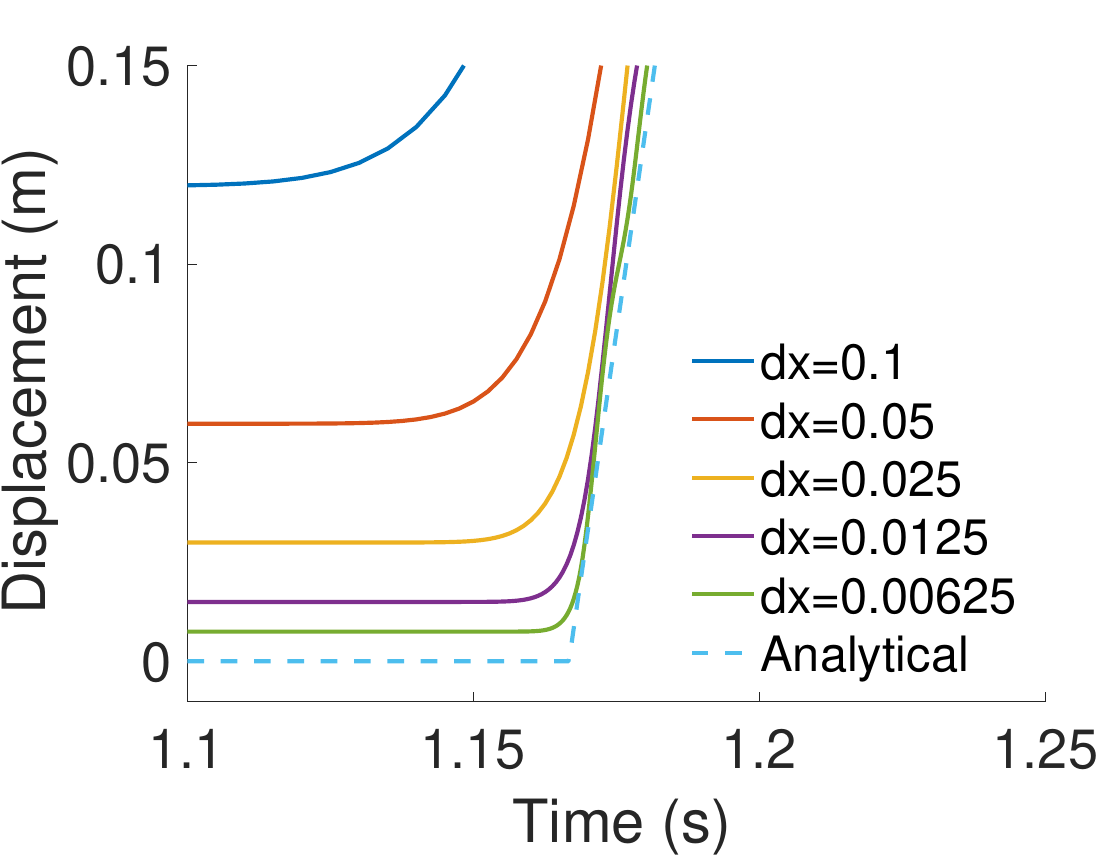}
    \includegraphics[width=0.24\linewidth]{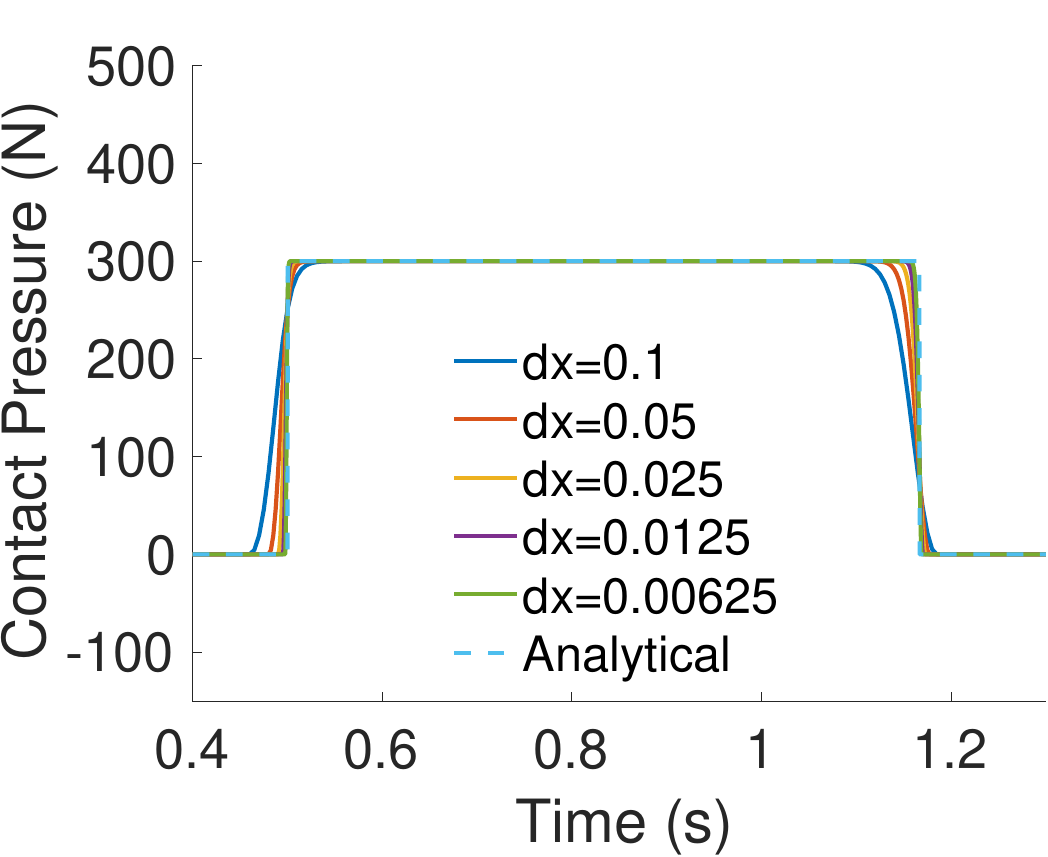} 
    \includegraphics[width=0.24\linewidth]{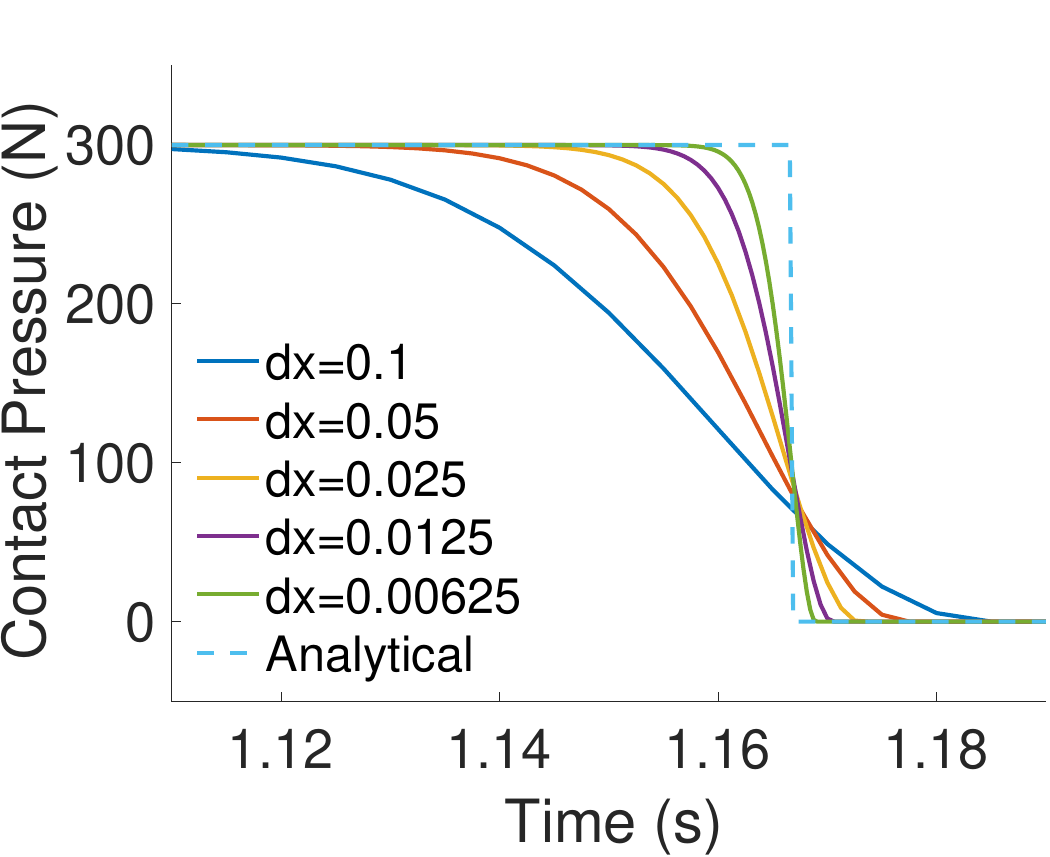}
    \caption{
    \textbf{Impact of an elastic bar -- convergence under refinement (Newmark).}
    Convergence of displacement (left, rate $=1.0389$) and contact pressure (right, rate $=0.4709$) under $\hat{d}$, $\Delta x$, and $h$ refinement with Newmark time integration. Second and fourth column contains zoom-in views of the first and third column plots respectively.}
    \label{fig:1d_impact_conv}
\end{figure}

\begin{figure}[ht]
    \centering
    \includegraphics[width=0.24\linewidth]{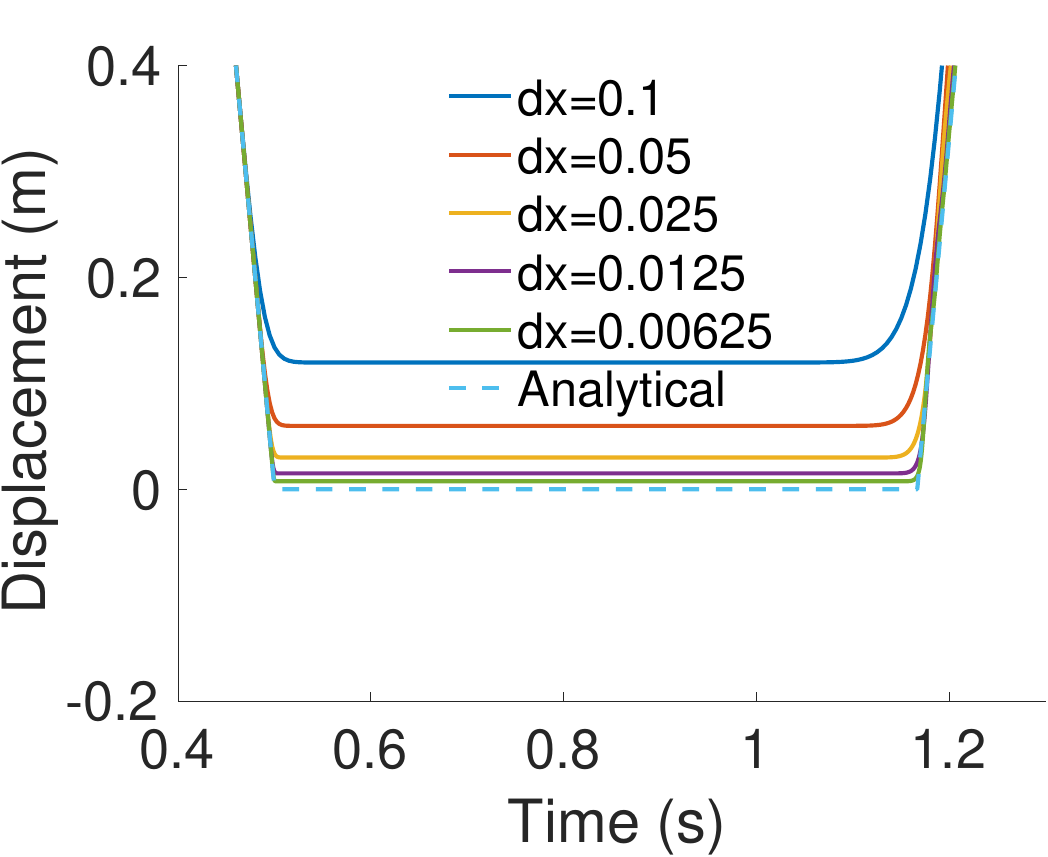} 
    \includegraphics[width=0.24\linewidth]{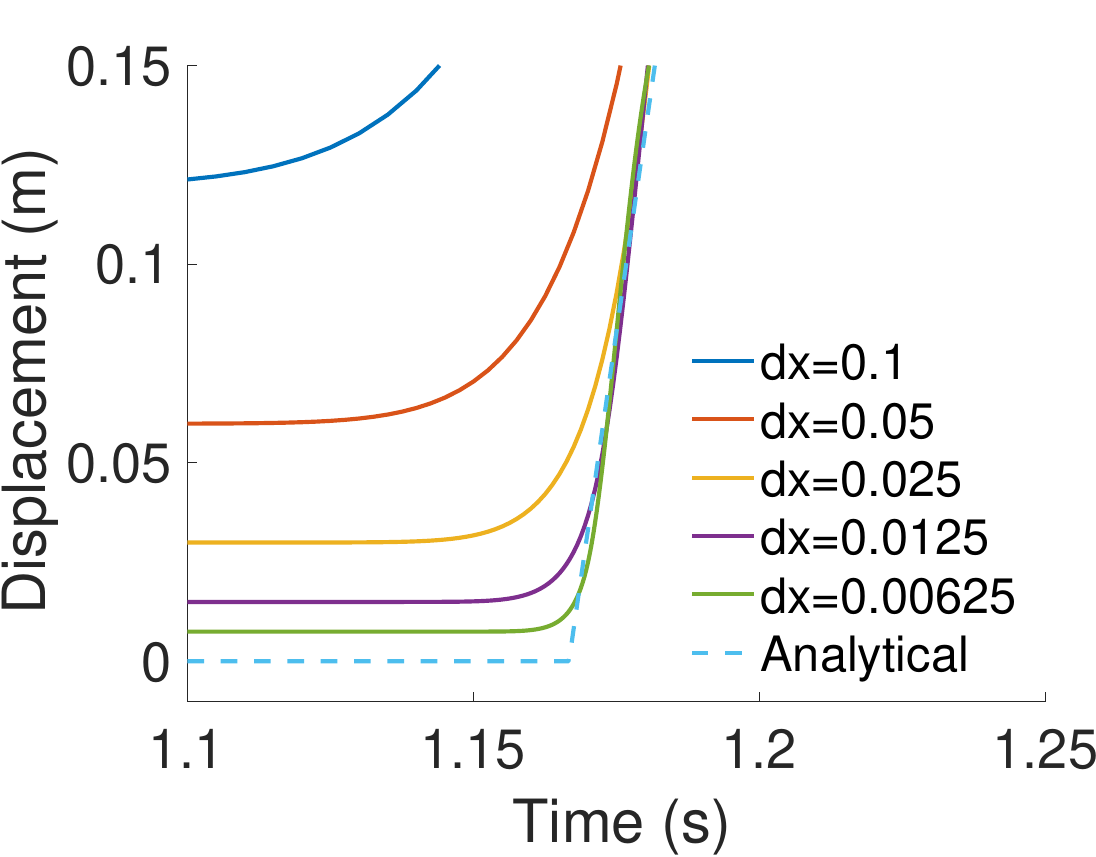}
    \includegraphics[width=0.24\linewidth]{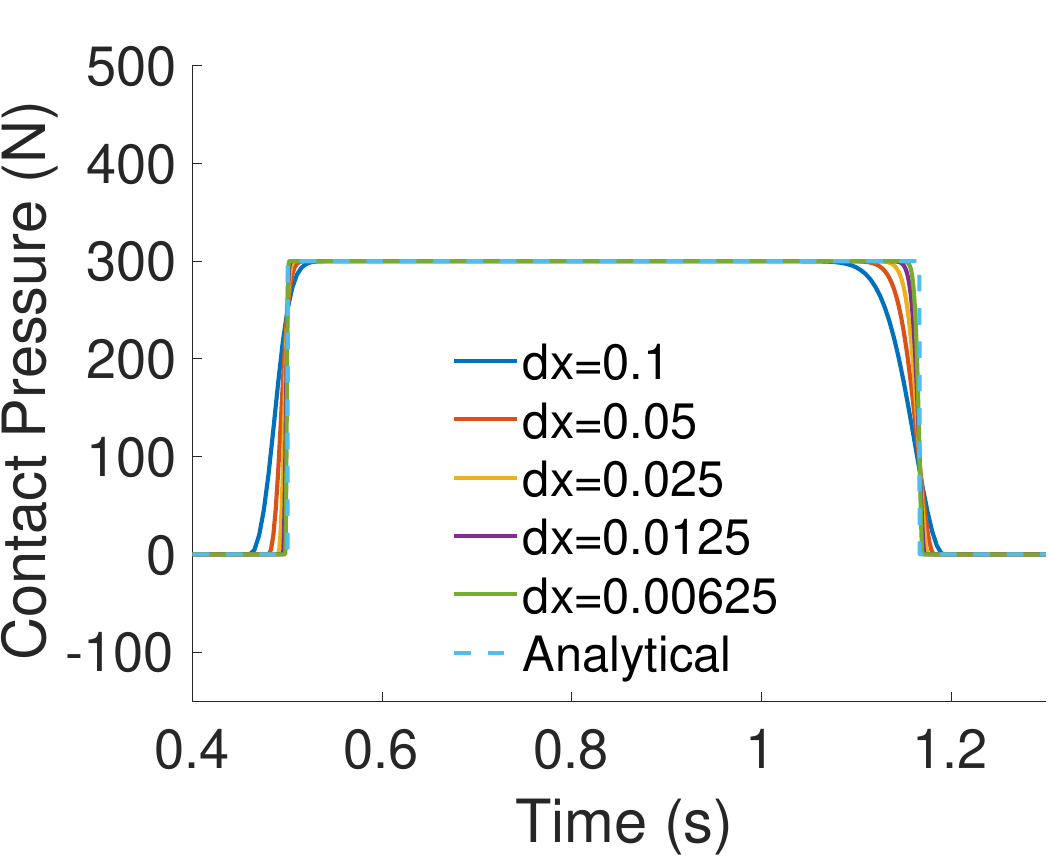} 
    \includegraphics[width=0.24\linewidth]{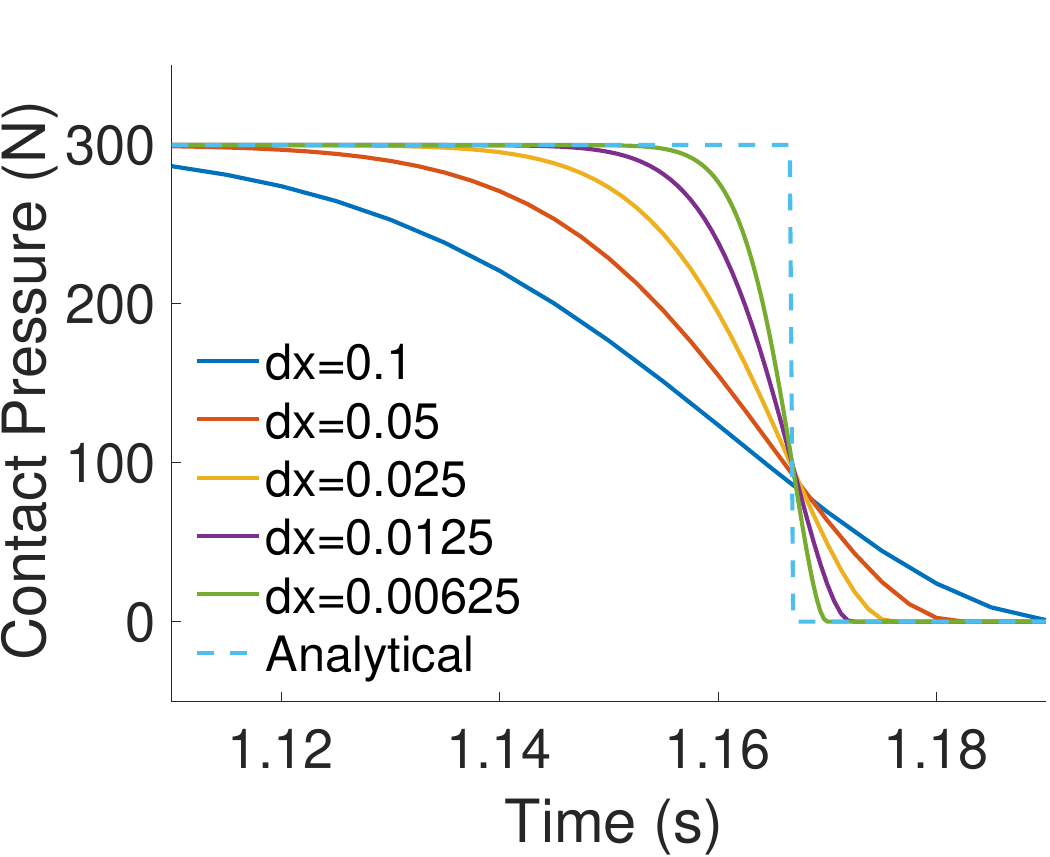}
    \caption{
    \textbf{Impact of an elastic bar -- convergence under refinement (BDF-2).}
    Convergence of displacement (left, rate $=1.0669$) and contact pressure (right, rate $=0.4402$) under $\hat{d}$, $\Delta x$, and $h$ refinement with BDF-2 time integration. Second and fourth column contains zoom-in views of the first and third column plots respectively.}
    \label{fig:1d_impact_conv_BDF2}
\end{figure}

\begin{figure}[ht]
    \centering
    \includegraphics[width=0.24\linewidth]{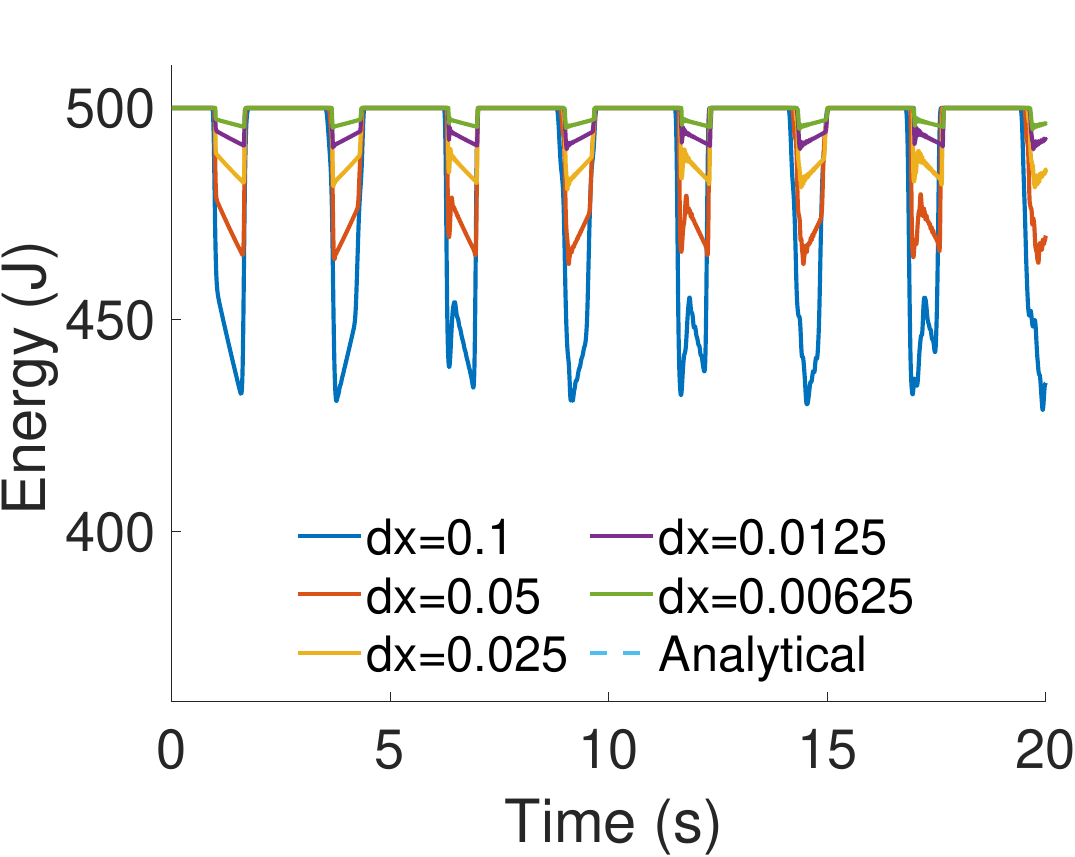}
    \includegraphics[width=0.24\linewidth]{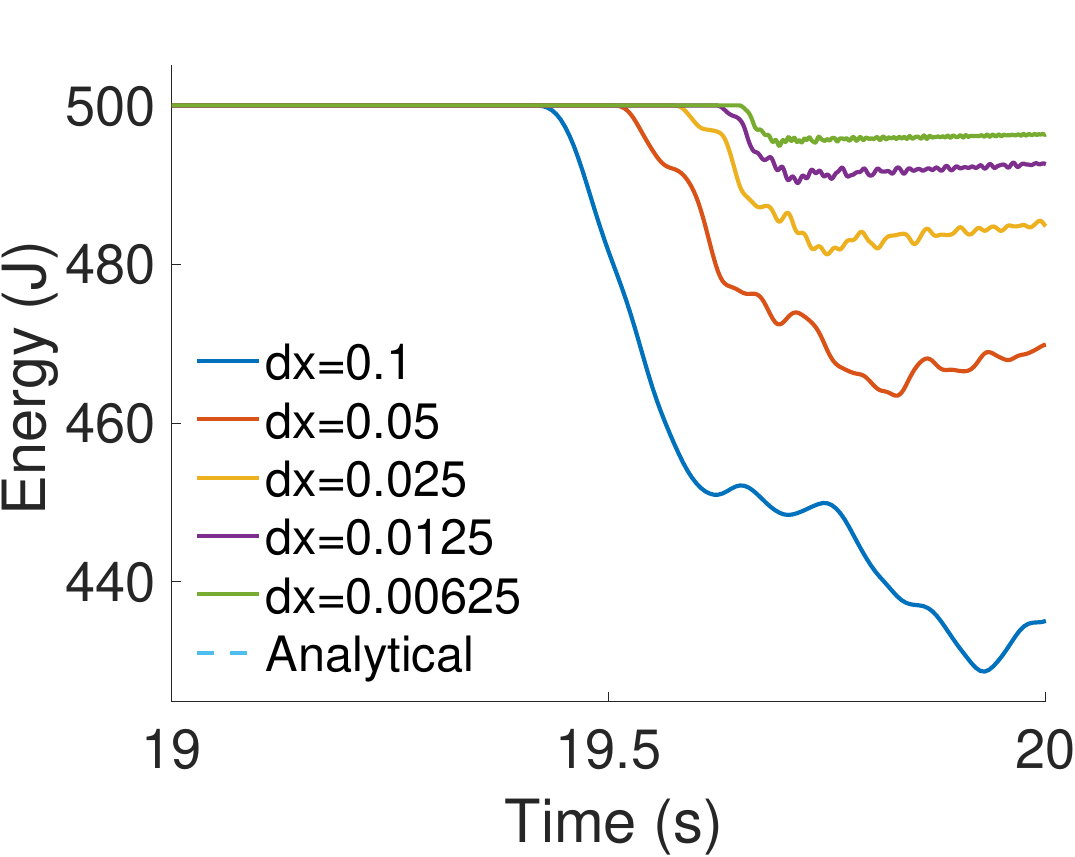}
    \includegraphics[width=0.24\linewidth]{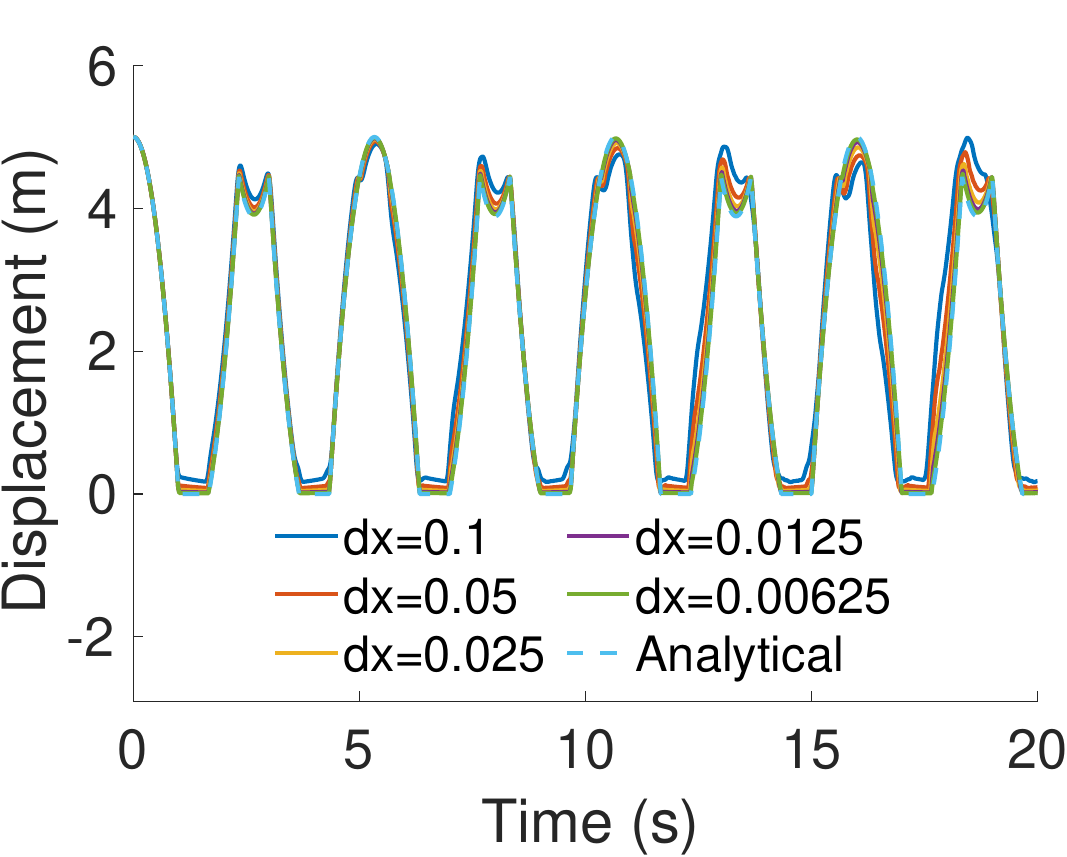}
    \includegraphics[width=0.24\linewidth]{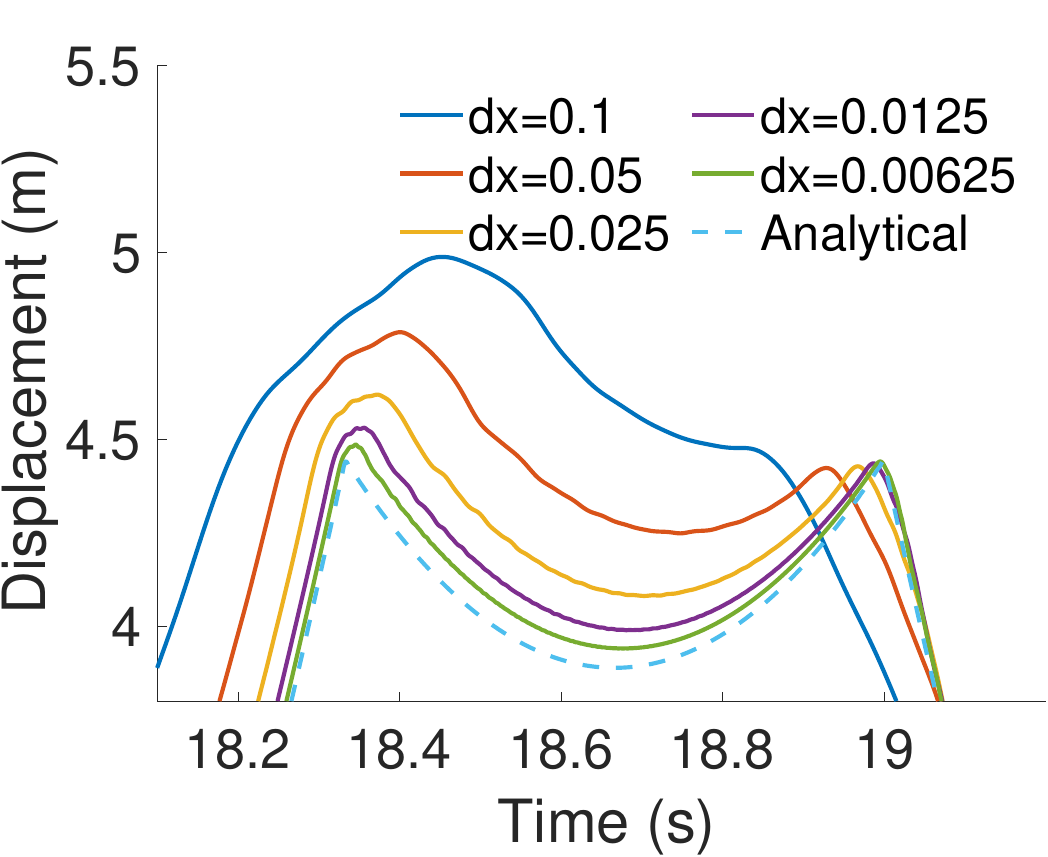}
    \caption{
    \textbf{Bounces of an elastic bar -- convergence under refinement (Newmark).}
    Convergence of energy (top, rate $=0.9877$) and displacement (bottom, rate $=1.0092$) under $\hat{d}$, $\Delta x$, and $h$ refinement with Newmark time integration. Second and fourth column contains zoom-in views of the first and third column plots respectively.}
    \label{fig:1d_bounces_conv}
\end{figure}

\begin{figure}[ht]
    \centering
    \includegraphics[width=0.24\linewidth]{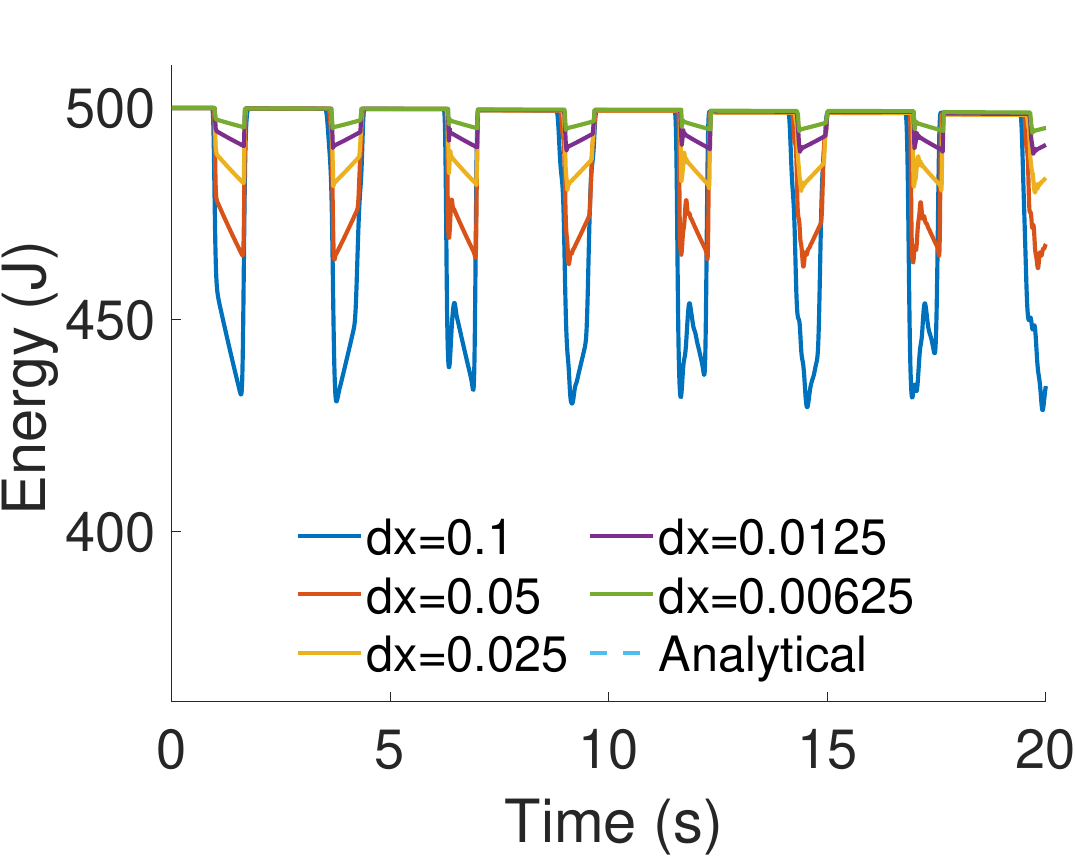}
    \includegraphics[width=0.24\linewidth]{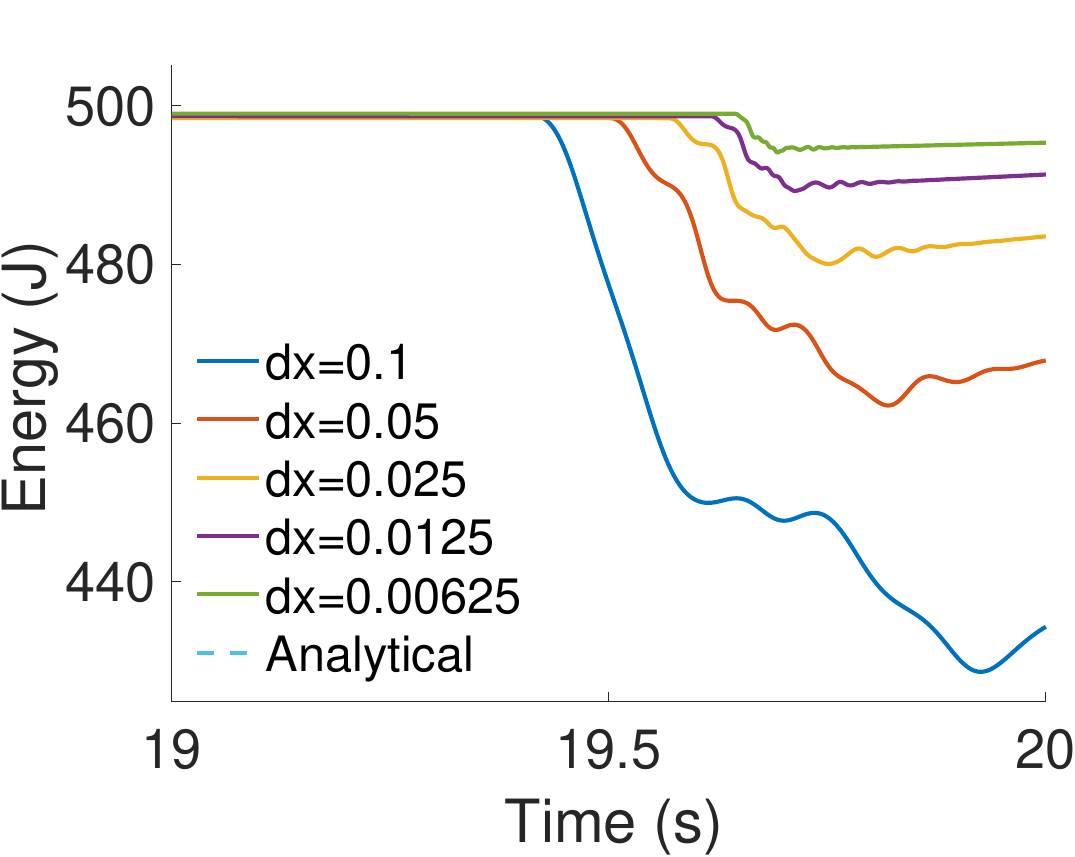}
    \includegraphics[width=0.24\linewidth]{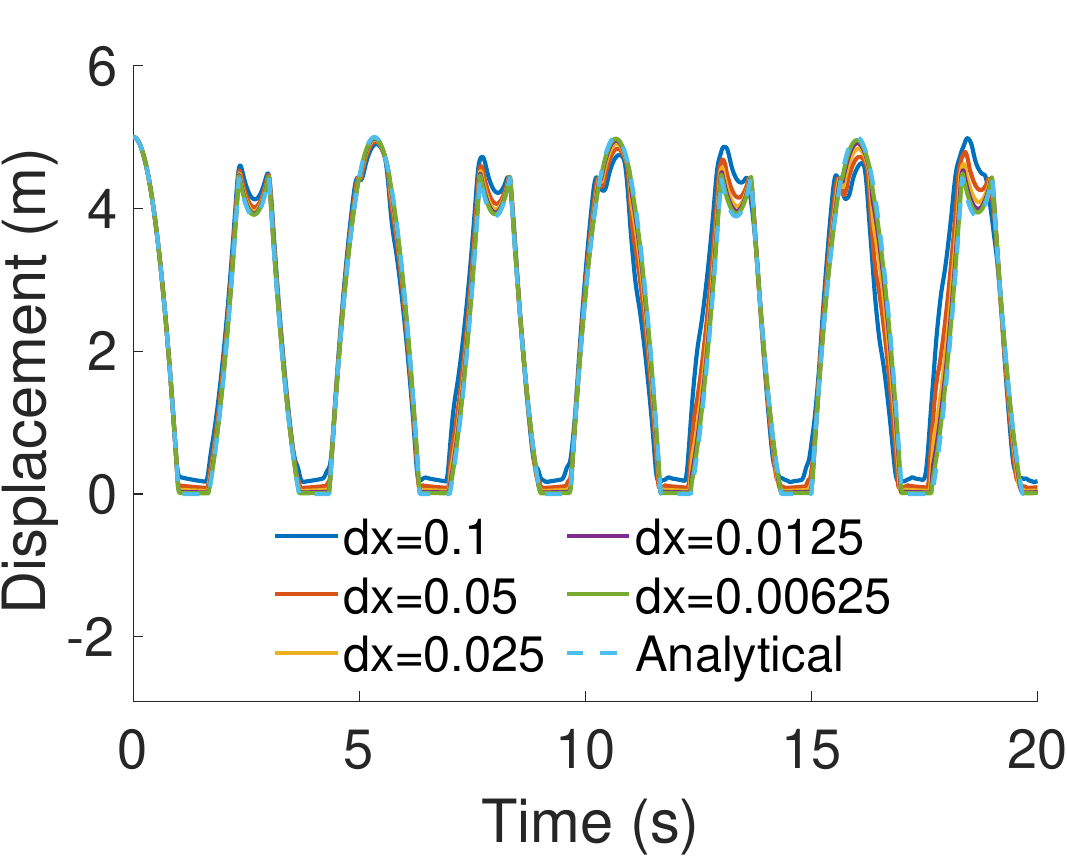}
    \includegraphics[width=0.24\linewidth]{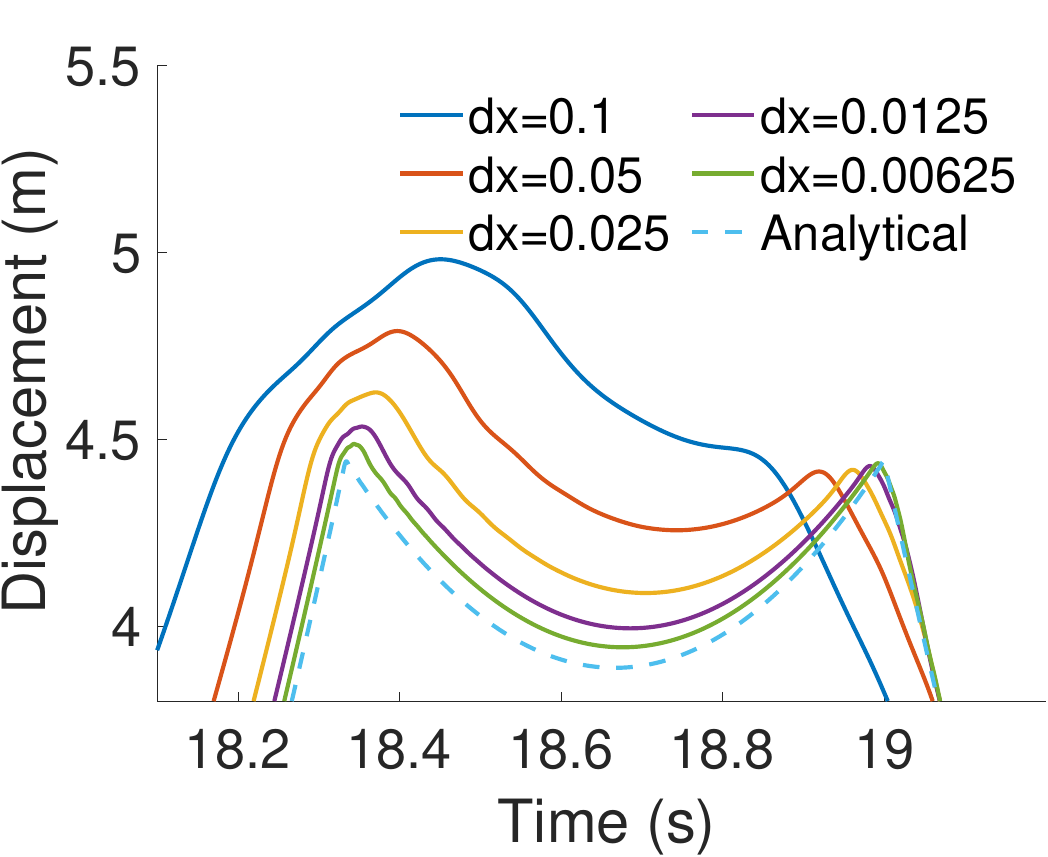}
    \caption{
    \textbf{Bounces of an elastic bar -- convergence under refinement (BDF-2).}
    Convergence of energy (left, rate $=0.9384$) and displacement (right, rate $=0.9661$) under $\hat{d}$, $\Delta x$, and $h$ refinement with BDF-2 time integration. Second and fourth column contains zoom-in views of the first and third column plots respectively.}
    \label{fig:1d_bounces_conv_BDF2}
\end{figure}

\section{Evaluation in 2D and 3D}

In two and three-dimensions, frictional contact now becomes possible and we must consider the contact-interaction of meshed interfaces. Here we first examine the sliding and bouncing behavior of an elastic square on a fixed analytical ground and show that the maximal energy dissipation and displacement and contact pressure curves all converge under refinement just as in our 1D evaluation above. We then demonstrate the accurate capture of stick and slip behaviors under varying friction coefficients by \EIPC with an analytical slope test and show that with \EIPC's consistent smooth approximation to the max operator, the vertical displacement of a square slipping on a fixed meshed ground can converge to a straight line with only spatial refinement of the mesh boundary. We then further consider frictional benchmark tests and close with challenging geometric collision ``stress-tests'', a large-deformation high-speed dynamic collision problem, and an application to the analysis of compressed microstructure testing.

\subsection{Refinement in 2D}

\label{sec:2d_slides}

\paragraph{Block on ground}
We first consider refinement of a slower-speed contact problem in 2D with a $\SI{2}{\meter}$-wide square, initialized to a height immediately ($\hat{d}$) above a fixed analytical ground without friction. We use a nonlinear (neo-Hookean) material with Young's: $E=\SI{4000}{\newton/\meter}$, Poisson: $\nu=0.2$, and density $\rho=\SI{100}{\kilo\gram/\meter^2}$; gravity is $g=\SI{-5}{\meter/\second^2}$, and time step is set by $\nu_C=1.5$. The square is uniformly and symmetrically tessellated with $\Delta x$. Under gravity, this soft square will compress while its bottom interface slides periodically back and forth along the ground.
Fixing the relations $\hat{d} = 0.5\Delta x$ and $h = \nu_C\frac{\Delta x}{\sqrt{E/\rho}}$, we perform  refinements by half down to $\Delta x = 0.0125m$. We measure system energy, (central top node's) vertical displacement, and (at center bottom node) contact pressure over time. Applying BDF-2 time integration, all above measures converge linearly (to finest solution -- no analytic model is available) as resolution increases despite the nonlinear elasticity applied (\cref{fig:2d_slides}).

\begin{figure}[ht]
    \centering
    \includegraphics[width=0.175\linewidth]{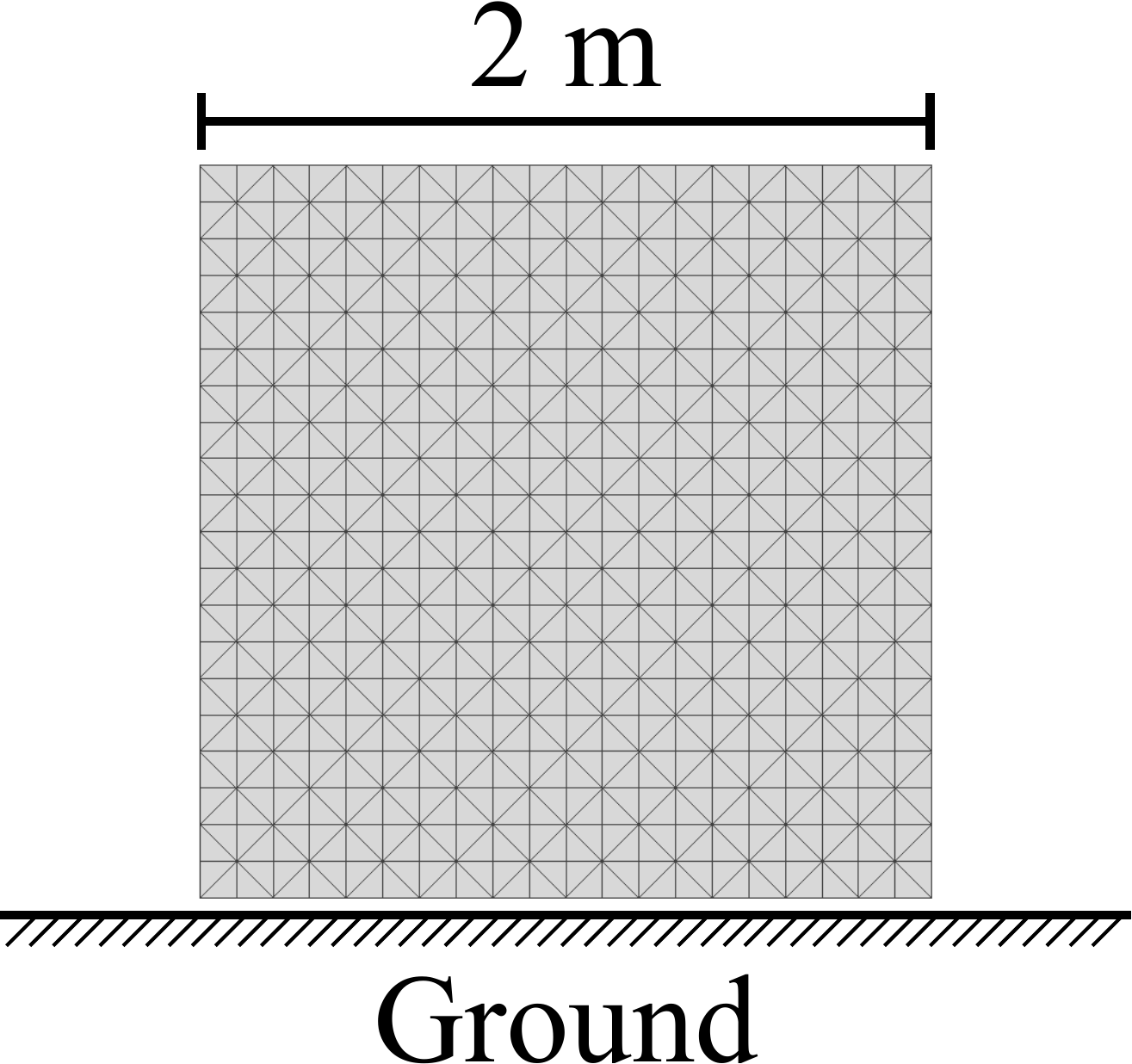} %
    \includegraphics[width=0.265\linewidth]{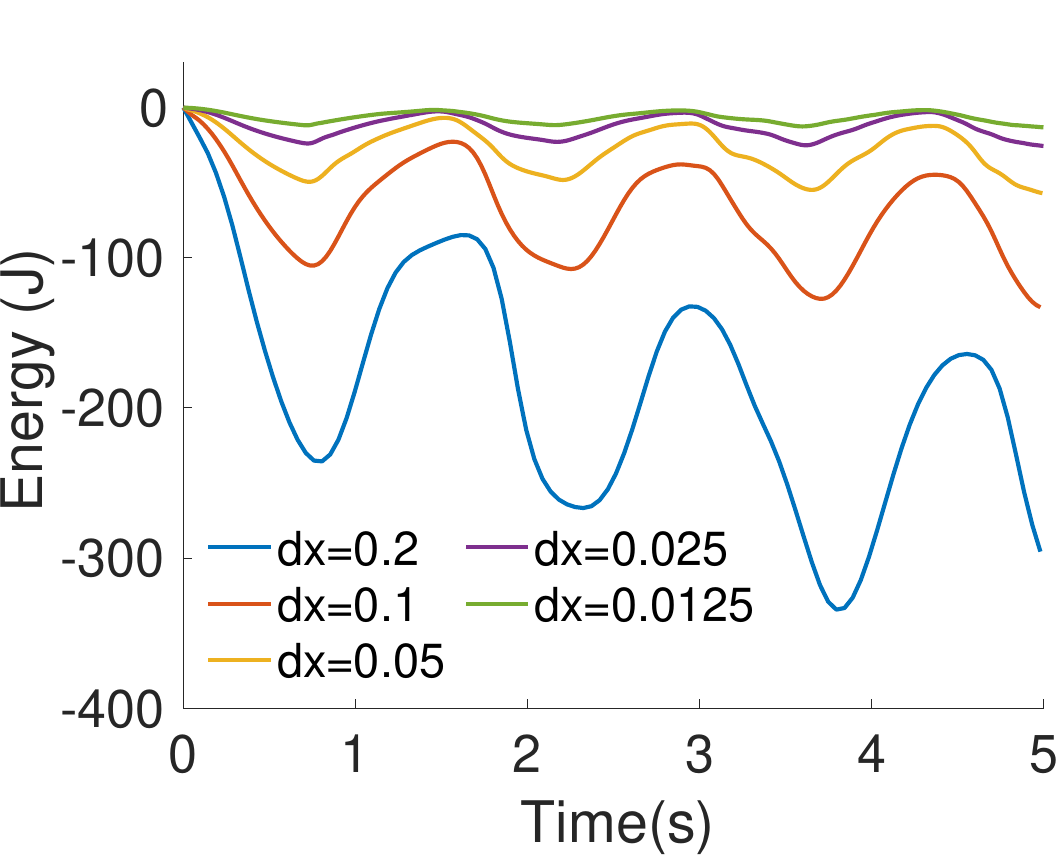}
    \includegraphics[width=0.265\linewidth]{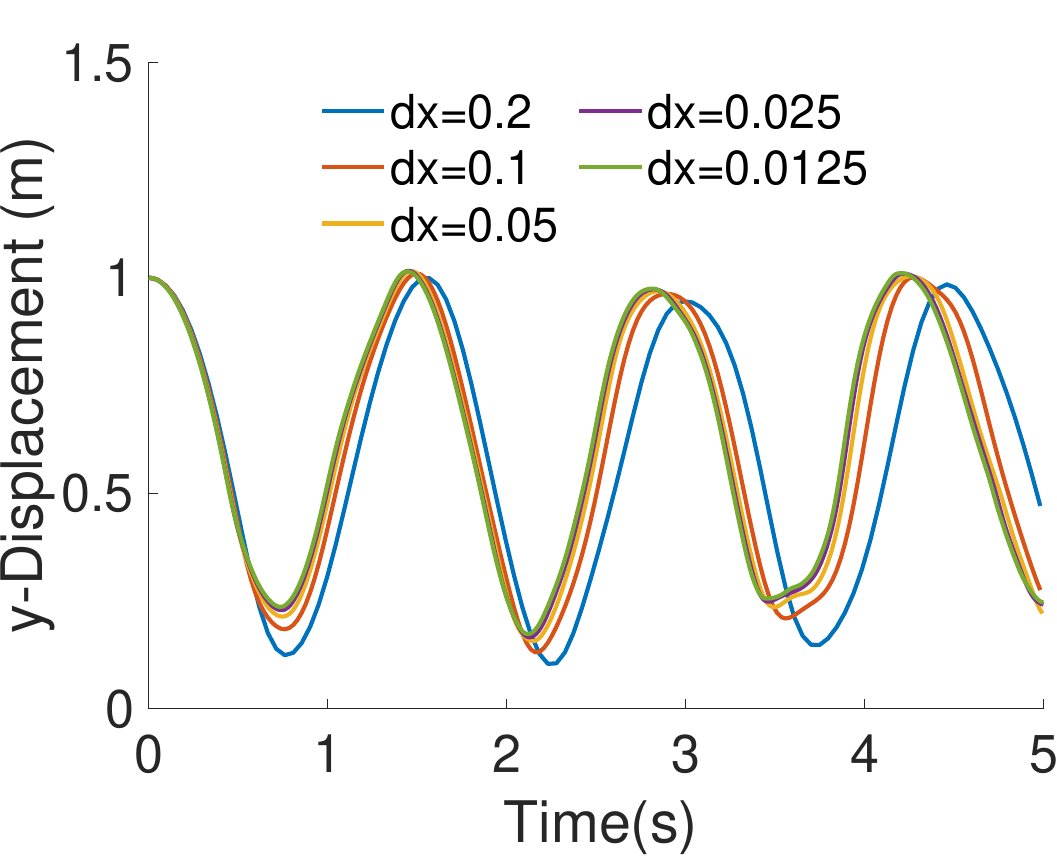} %
    \includegraphics[width=0.265\linewidth]{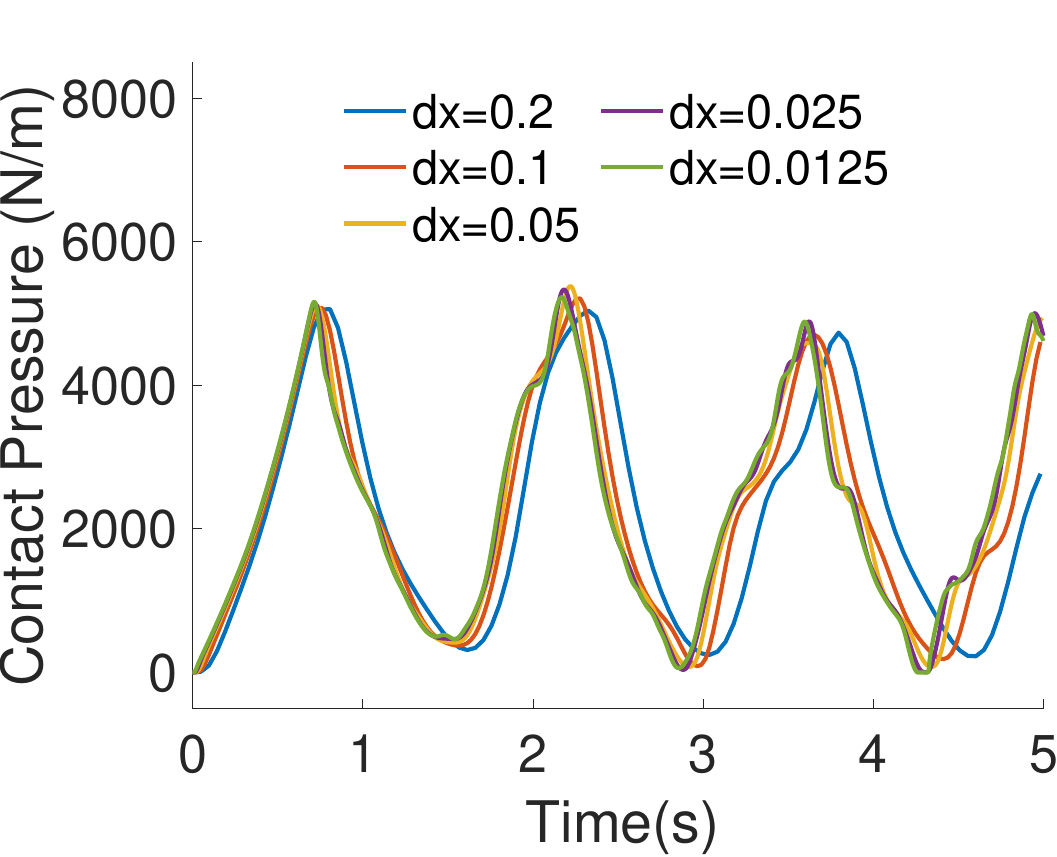}
    \caption{
    \textbf{Static block on ground.}
    From left to right: experiment setup ($\Delta x = 0.1m$) and convergence of energy (rate $=1.5688$), displacement (rate $=1.3850$), and contact pressure (rate $=1.1952$) under $\hat{d}$, $\Delta x$, and $h$ refinement with BDF-2 time integration.}
    \label{fig:2d_slides}
\end{figure}

\paragraph{Impact and bouncing on ground}
We next consider refinement with higher-speed impacts and repeated bouncing in 2D. We extrude the \emph{bouncing} benchmark problem from 1D (\cref{sec:1d_bounces}), setting the stiffness $2\times$ as large, $\nu_C = 0.75$, and apply BDF-2 time integration. This gives a $\SI{10}{\meter}$-wide square initialized $\SI{5}{\meter}$ above a fixed analytical ground (again no friction). The square is uniformly and symmetrically tessellated by $\Delta x$, with $\rho = \SI{1}{\kilo\gram/\meter^2}$, $E = \SI{1800}{\newton/\meter}$, $\nu=0.2$, applying neo-Hookean elasticity (ensuring no element inversion). Gravity is set to $g=\SI{-10}{\meter/\second^2}$ and $\kappa = 0.1E$.
During simulation, over repeated bounces, gravitational energy progressively transfers to elasticity energy as the highest bouncing point decreases and high-frequency elastic waves become more pronounced.
Repeating the same refinement as for the ``block-on-ground'' above, we now observe that all measurements converge with resolution increase (\cref{fig:2d_bounces}). 
However, as the simulation continues,  high-frequency elastic waves magnify, so that the simulation becomes less stable and the curves at changing resolution diverge increasingly from accumulated errors.

\begin{figure}[ht]
    \centering
    \includegraphics[width=0.18\linewidth]{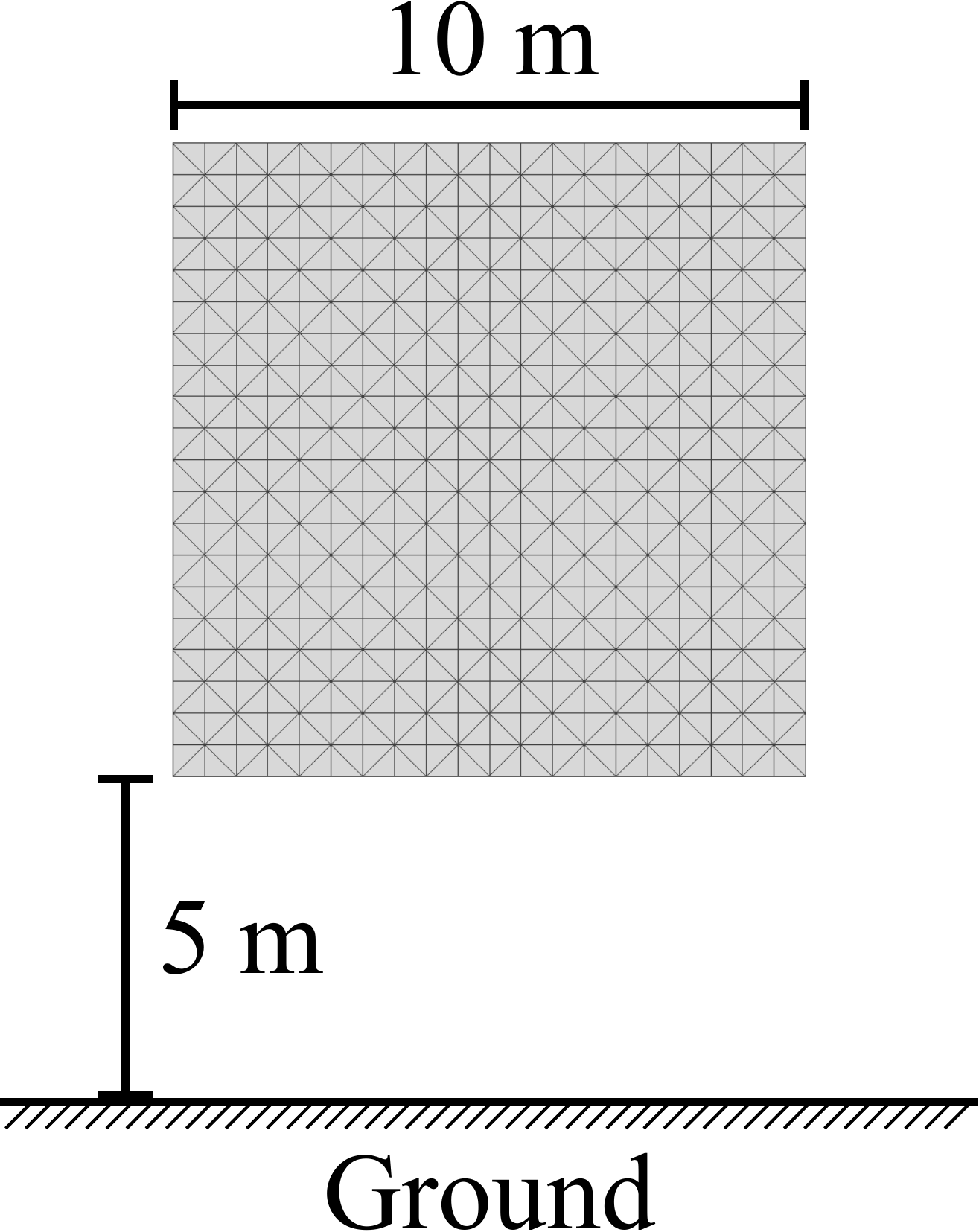}%
    \includegraphics[width=0.27\linewidth]{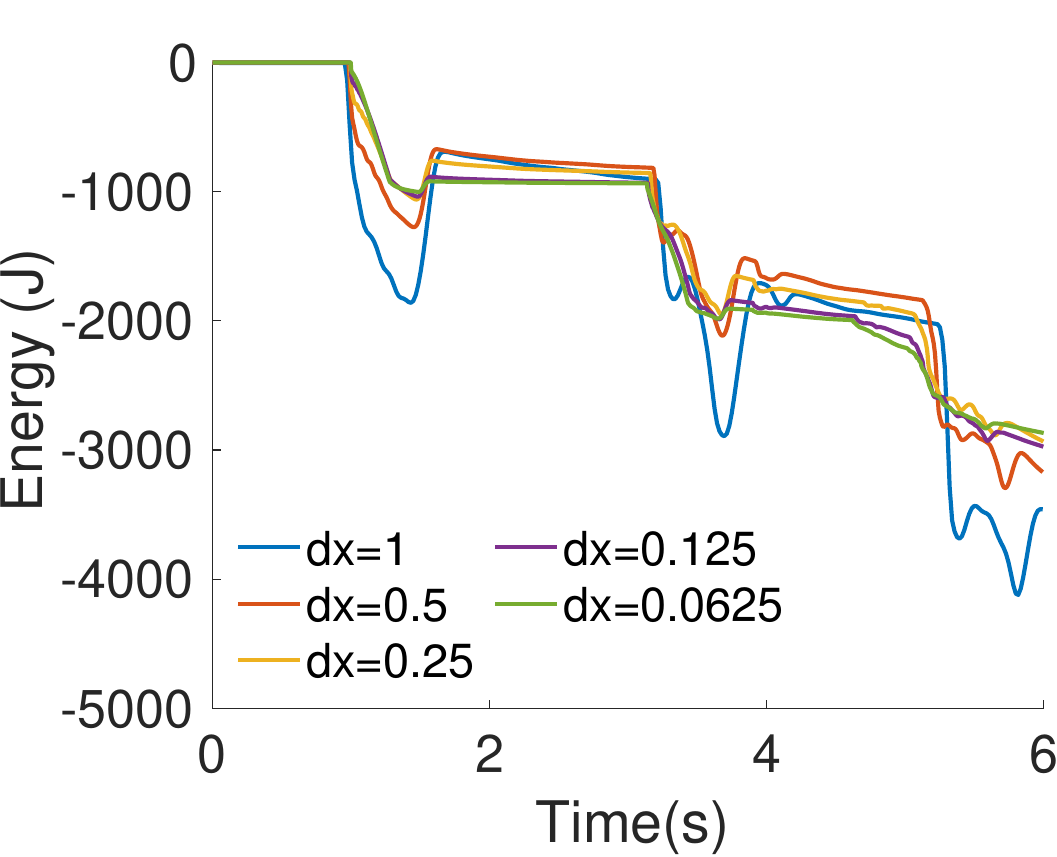}
    \includegraphics[width=0.27\linewidth]{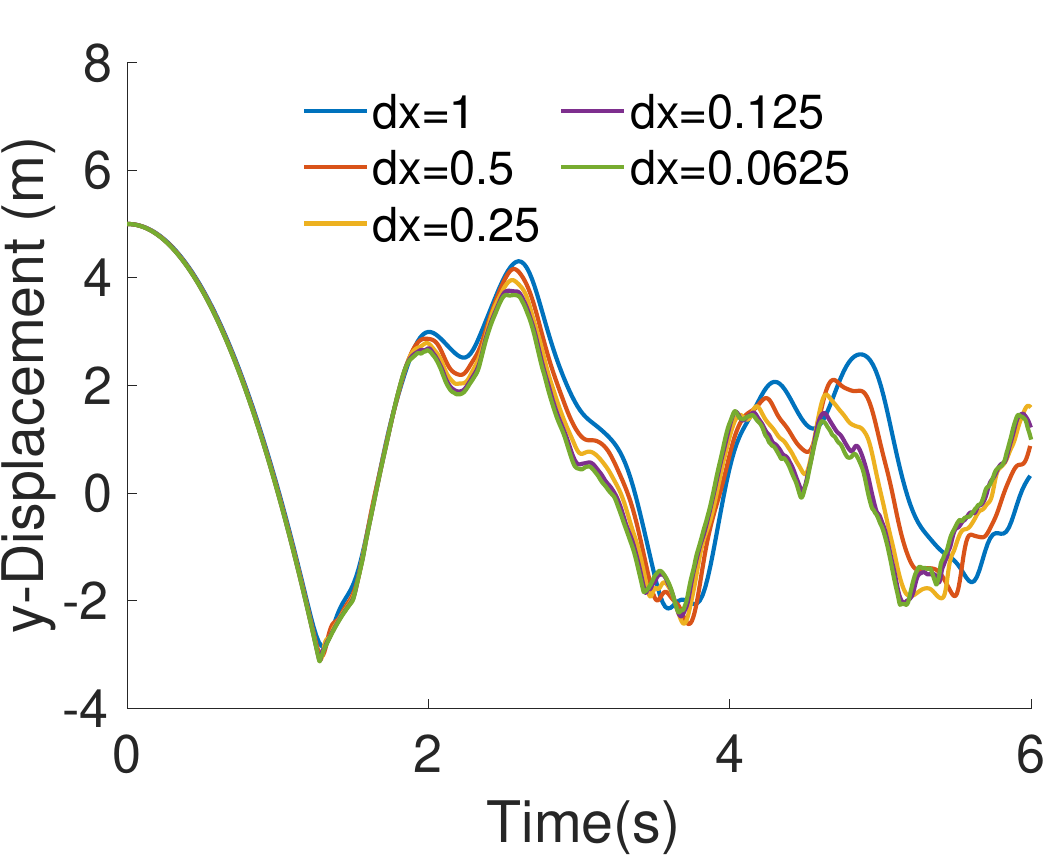}%
    \includegraphics[width=0.27\linewidth]{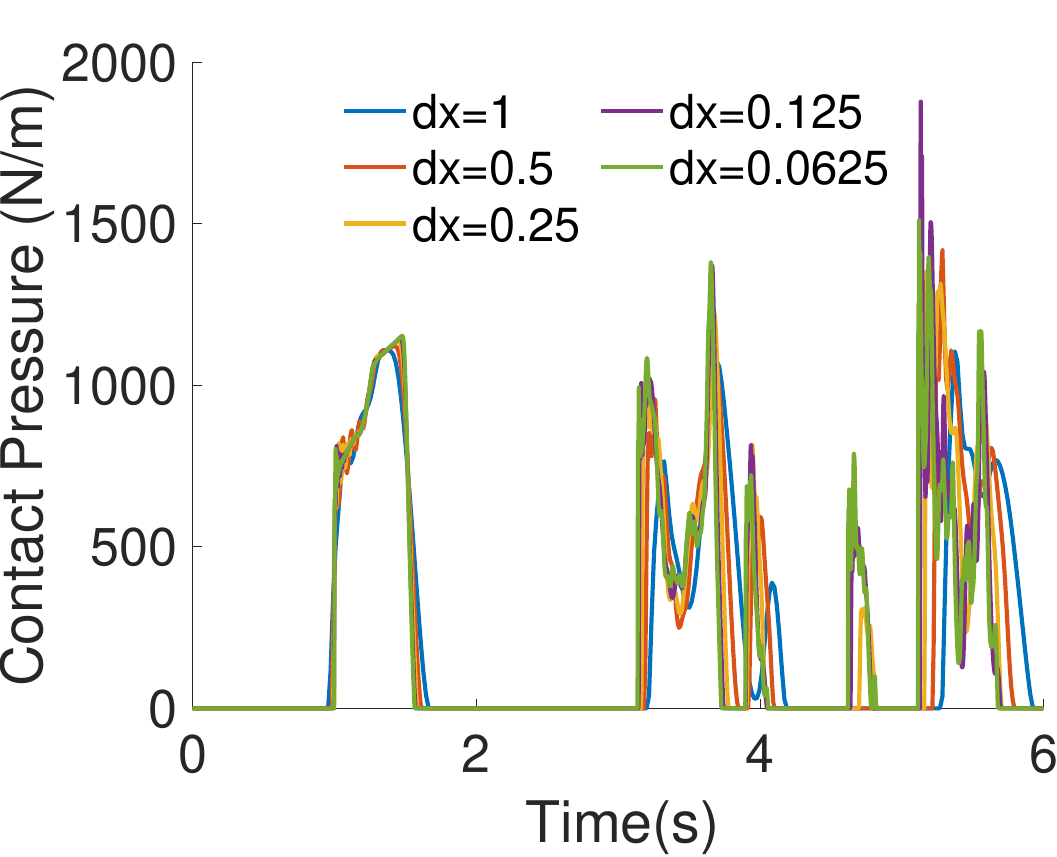}
    \caption{
    \textbf{Dynamic block on ground.}
    From left to right: experiment setup ($\Delta x = 0.5m$) and
    convergence of energy (rate $=1.0718$), displacement (rate $=1.1527$), and contact pressure (rate $=0.5065$) under $\hat{d}$, $\Delta x$, and $h$ refinement with BDF-2 time integration.}
    \label{fig:2d_bounces}
\end{figure}

\subsection{Tessellation Error: A Sliding Block on Meshed Boundary}

\subsubsection{2D}

To compare and verify the direct summation approximation~\cite{Li2020IPC} and our consistent approximation to the max operator, we test an example with a $\SI{2}{\meter}$-wide square sliding on a fixed $\SI{16}{\meter}$-wide meshed ground ($\mu=0$).
The square is placed right $\hat{d}=\SI{0.1}{\meter}$ above the ground in the middle with an initial velocity $v_0 = \SI{1}{\meter/\second}$. It has Young's modulus $E = \SI{2e11}{\newton/\meter}$, Poisson's ratio $\nu = 0.3$, and density $\rho = \SI{8000}{\kilo\gram/\meter^2}$, nearly rigid. The gravity is $g=\SI{-5}{\meter/\second^2}$, and we set $\kappa = \SI{1e6}{\newton/\meter}$ and fix the time step size at $h=\SI{0.01}{\second}$. Both the square and the single layer ground are uniformly tessellated with $\Delta x$.

With $\Delta x = \SI{2}{\meter}$, after the initial drop for acquiring contact forces in IPC framework, we clearly see the jumps on the horizontal displacement curve given by direct summation everytime when the square corner is crossing a node on the ground (\cref{fig:2d_slip} top). The arc between the jumps are due to the ground point to square edge contact pair, which hold the square at different location at bottom, forming unbalanced force distributions during slipping. Note that these all only happen within the scale of $\SI{e-3}{\meter}$, nearly 2 orders-of-magnitude smaller than $\hat{d}$.
Our consistent approximation still have jumps but the magnitude is much smaller (\cref{fig:2d_slip} bottom), and the arcs have very similar profile.

\begin{figure}[ht]
    \centering
    \parbox{0.48\linewidth}{
        \centering
        {\small Direction summation~\cite{Li2020IPC}}\\\vspace{5pt}
        \includegraphics[width=0.49\linewidth]{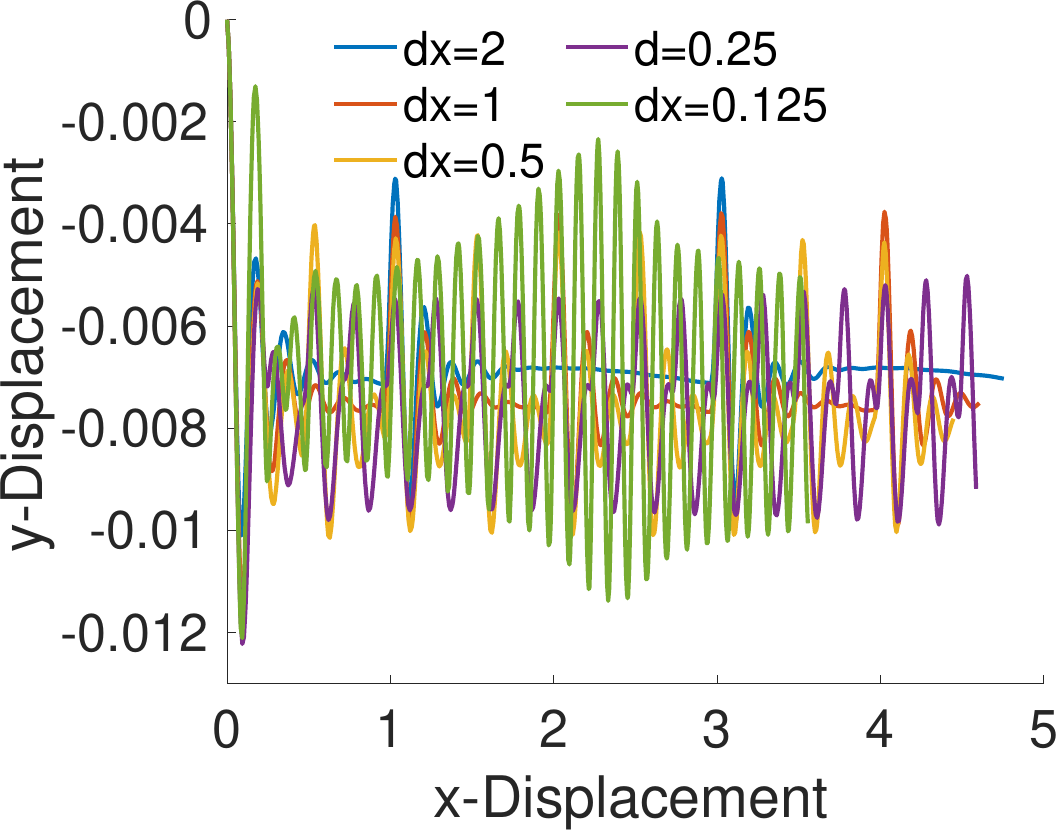}
        \includegraphics[width=0.49\linewidth]{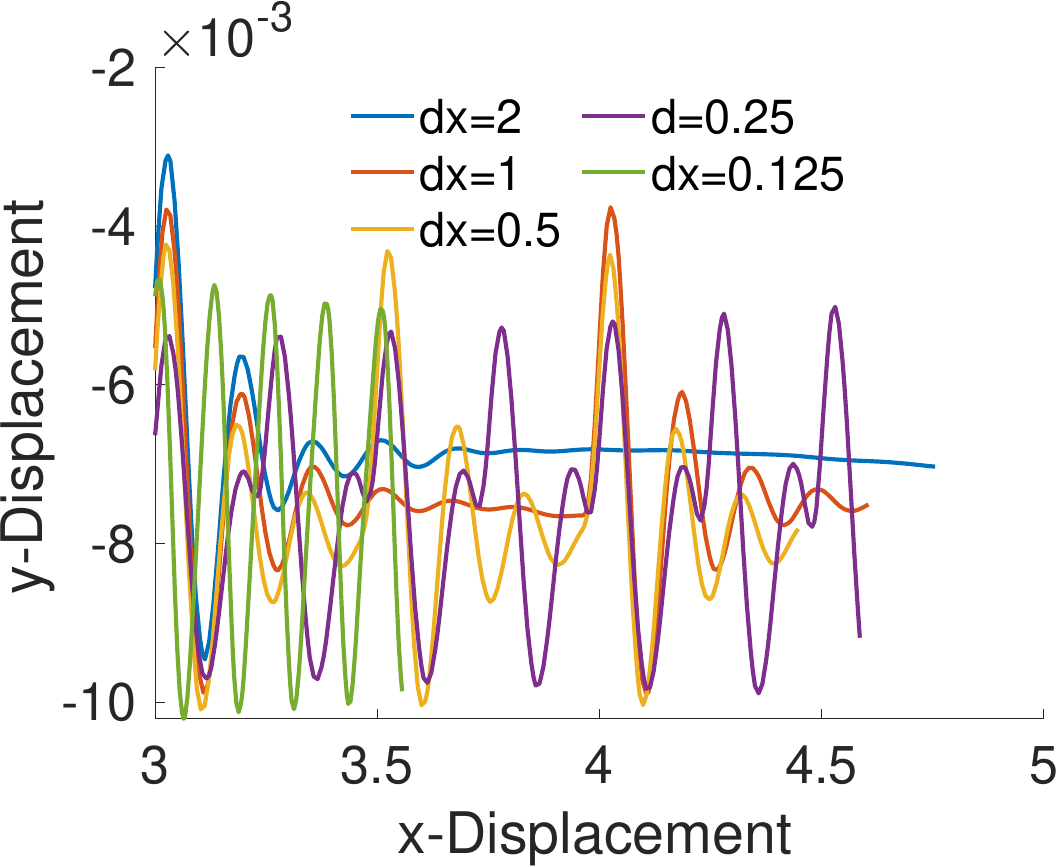}
    }
    \hspace{0.02\linewidth}
    \parbox{0.48\linewidth}{
        \centering
        {\small Our subtraction-based method}\\\vspace{5pt}
        \includegraphics[width=0.49\linewidth]{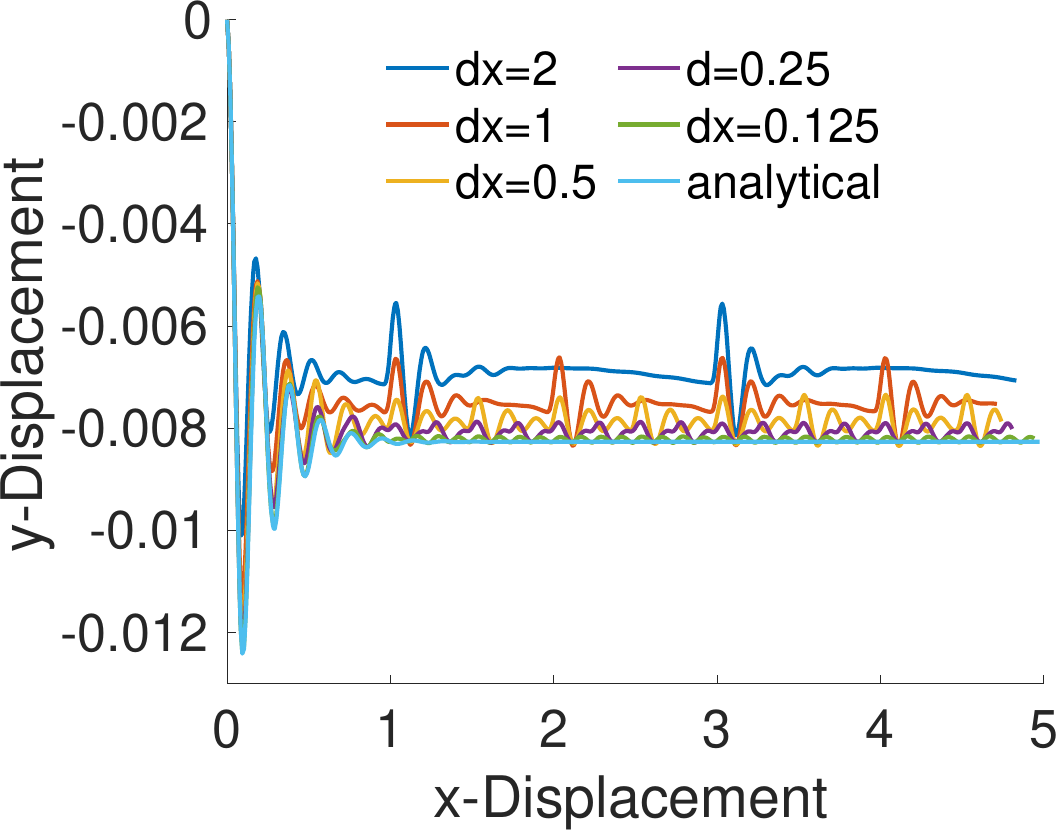}
        \includegraphics[width=0.49\linewidth]{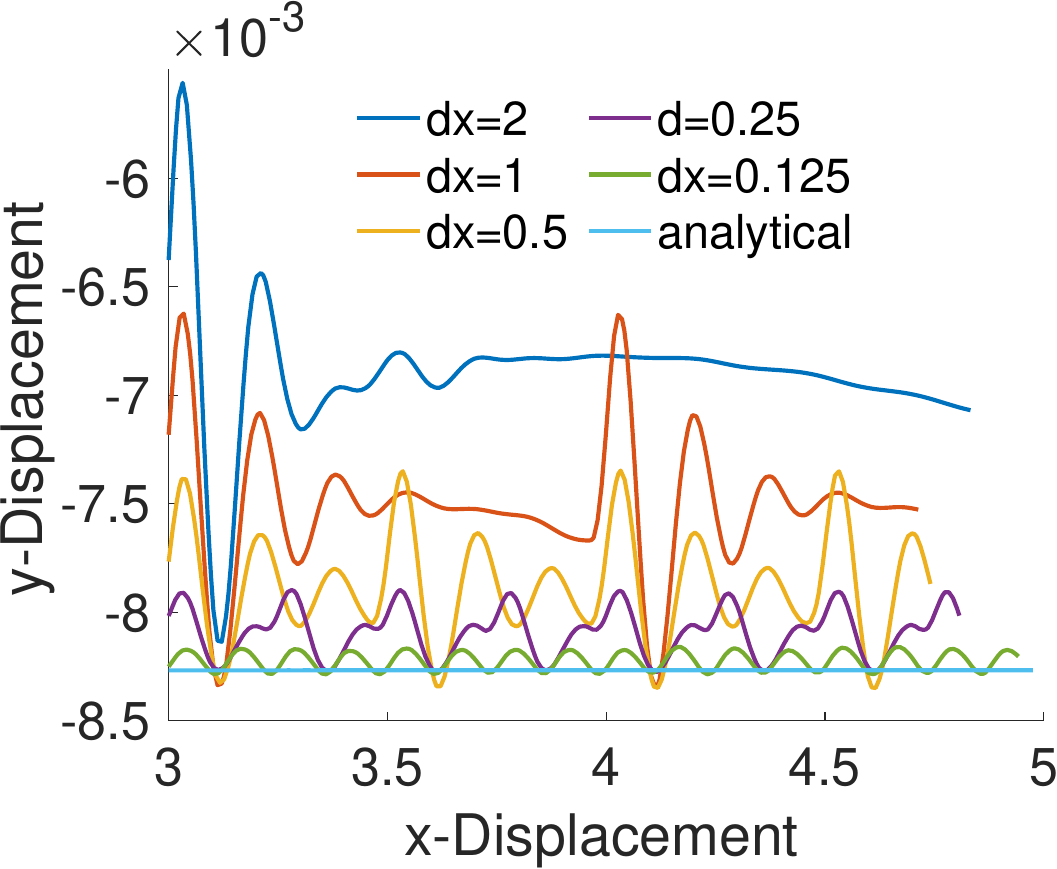}
    }
    \caption{
    \textbf{Sliding block on meshed boundary (2D).}
    Displacement plots of a block sliding on a meshed plane with the max operator approximated via direct summation~\cite{Li2020IPC} (left, nonconvergent) and our subtraction-based method (right, converging to the straight line at order = $1.0761$). Second and fourth column contains zoom-in views of the first and third column plots respectively.}
    \label{fig:2d_slip}
\end{figure}

This is because with a slipping square, the jumps also come from the activation of the ground point to square edge pairs during slipping in addition to the duplicate square point to ground edge pair when applying direct summation. As we show in \cref{fig:2d_slip} bottom, our approximation with only the first source of jumps generates results converging to a straight line under only $\Delta x$ refinement with $\hat{d}$ and time step size $h$ fixed at $0.1m$ and $0.01s$ respectively. 
But the second source of jumps does not converge as shown in \cref{fig:2d_slip} top with direct summation.
Although by refining $\hat{d}$ at fixed $\Delta x$, the portion of the duplication in direct summation vanishes, once a refinement has a fixed $\hat{d}/\Delta x$ ratio, the portion is then also fixed and thus not vanishing.

\subsubsection{3D} \label{sec:3D-convergence}

In 3D, it becomes more complicated with edge-edge stencils. We extrude the sliding experiment setup in 2D to 3D to check the convergence behavior for a cube sliding on a 3D plane with both point-triangle and edge-edge stencils.

We set $\hat{d} = 10^{-3}m$, orders-of-magnitude smaller than $\Delta x$, so that no extra duplication of the potential field from nearby edges is possible from edge-edge stencils. This enables convergence to a straight line, but not the reference solution obtained by sliding with $\Delta x = 0.25$ and $\hat{d} = 10^{-3}m$ on an analytical plane (\cref{fig:3d_sliding} left). This is because for two meshed planes touching each other, each edge would result in multiple edge-edge stencils, thus over integrating the quantity using our edge weights.
If $\hat{d}$ is also refined starting from $10^{-3}m$ linearly w.r.t. $\Delta x$, our results converge to the analytical solution $y=0$ nearly linearly (\cref{fig:3d_sliding} right).

\begin{figure}[ht]
    \centering
    \includegraphics[width=0.4\linewidth]{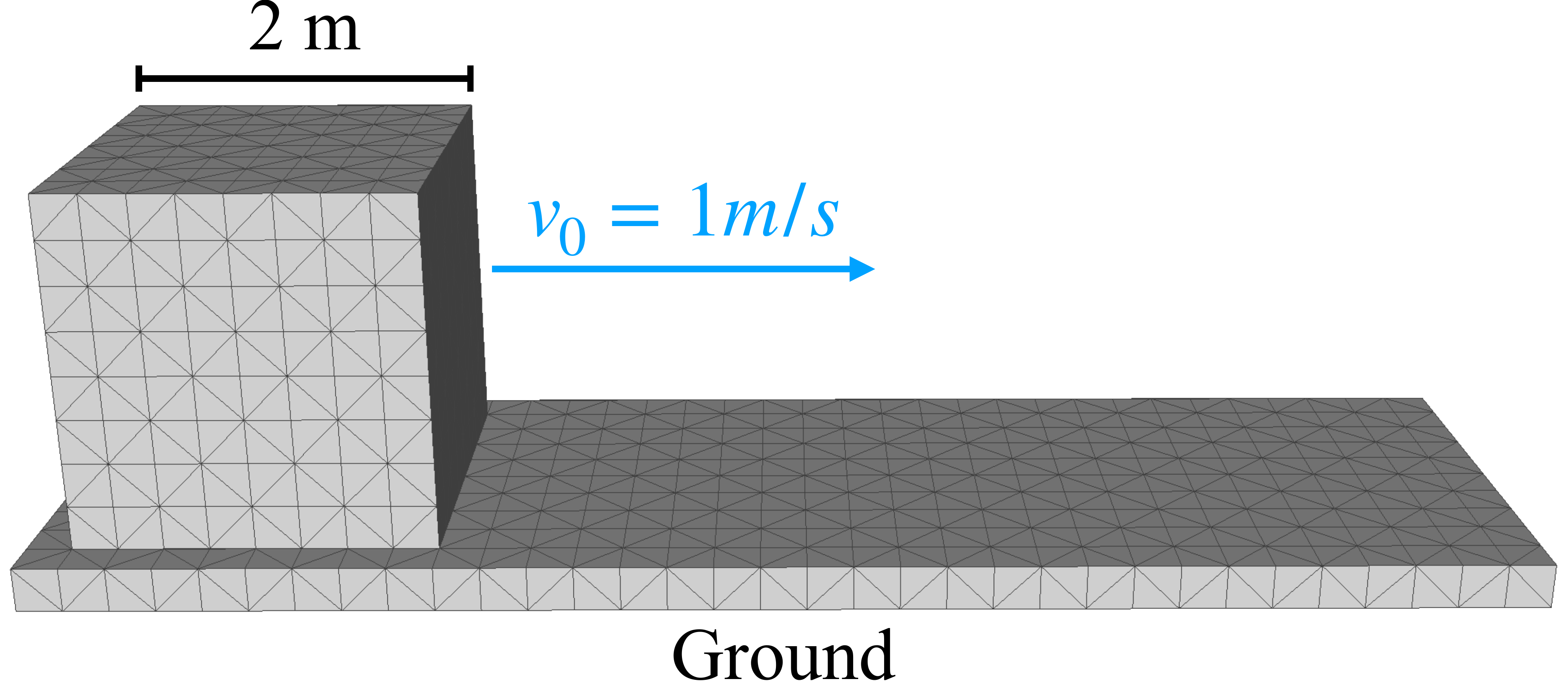}
    \includegraphics[width=0.29\linewidth]{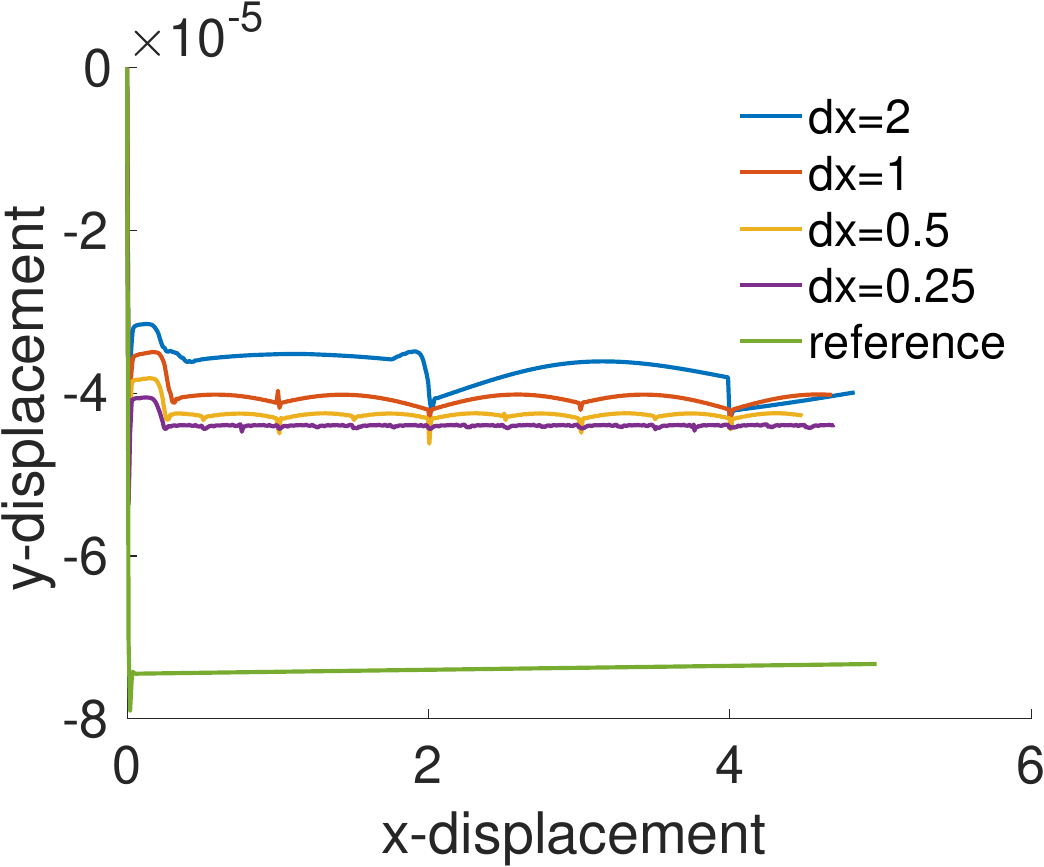}
    \includegraphics[width=0.29\linewidth]{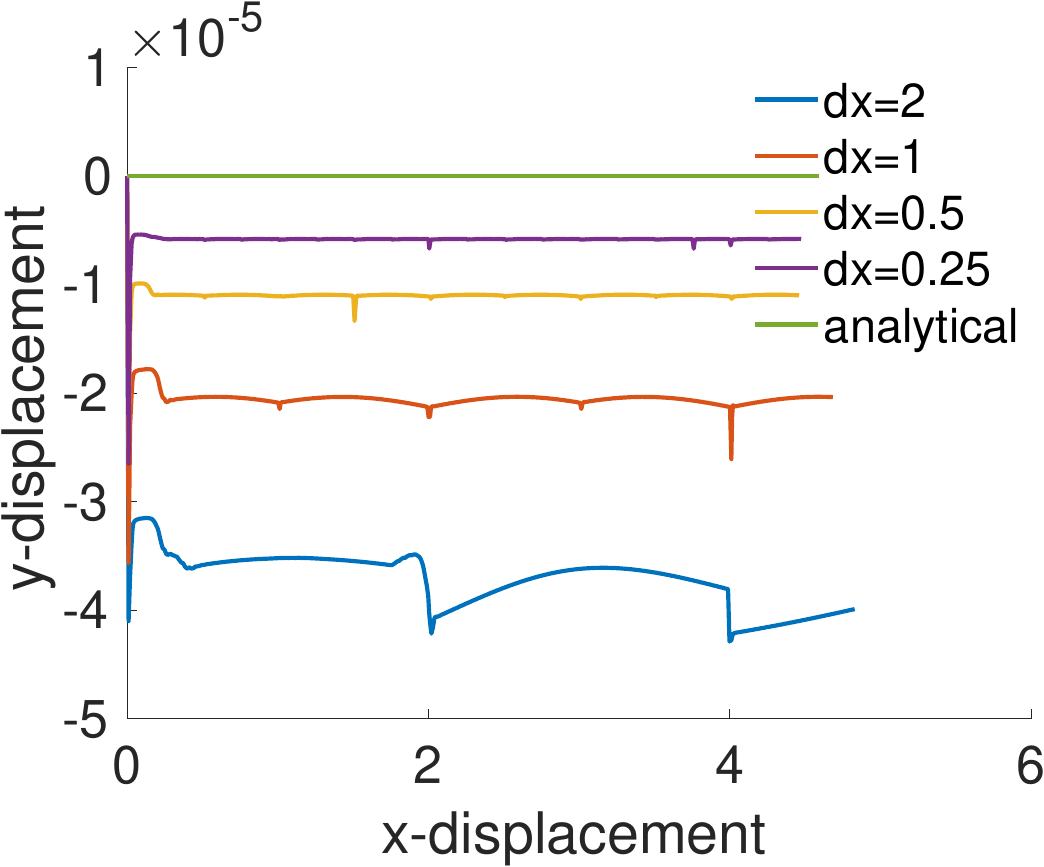}
    \caption{
    \textbf{Sliding block on meshed boundary (3D).}
    Experiment setup ($\Delta x=0.25m$, left) and displacement plots (center and right) of a block sliding on a meshed plane in 3D with the max operator approximated via our subtraction-based method. The center plot only refines $\Delta x$, and it converges to a straight line but not the reference line given by sliding on analytical plane. Both $\Delta x$ and $\hat{d}$ are refined in the right plot, which converge to the analytical solution $y=0$ at a rate of $0.8879$.}
    \label{fig:3d_sliding}
\end{figure}

\subsection{A Frictional Benchmark: Critical Angle on Slope}

To verify the accuracy of our friction model, an experiment with a stiff cube resting or sliding on a fixed analytical slope with a certain friction coefficient is created. 
When a rigid cube is placed on a slope with zero initial velocity, its acceleration has the following analytical form in the slope's tangent space:
\begin{equation}
    \mathbf{a} = \begin{bmatrix} g \min(0, \mu \cos{\theta} - \sin{\theta}) \\ 0 \end{bmatrix}.
\end{equation}
where $\mu$ is the friction coefficient between the cube and the slope, $g$ is the gravity acceleration, $\theta \in [0, \pi/4)$ is the inclined angle of the slope.

The initial configuration of this example is obtained by placing the cube $\hat{d}$ away from the slope, and then simulate under gravity ($g=\SI{5.099}{\meter/\second^2}$) with friction coefficient $\mu=0.5$ for $\SI{1}{\second}$ until the box becomes static. After obtaining the initial configuration, the slope test simulation is performed with different friction coefficients and the dynamic-static friction transition velocities $\epsilon_v$.

Here the cube is $\SI{0.02}{\meter}\times \SI{0.02}{\meter}\times \SI{0.02}{\meter}$, composed of just 8 nodes with density $\rho=\SI{1000}{\kilo\gram/\meter^3}$, Young's modulus $E=\SI{10}{\giga\pascal}$ and Poisson's ratio $\nu=0.4$. Slopes with friction coefficient $0.1$, $0.1999$, and $0.2$ have been tested (\cref{fig:3d_slope} 1st row), all with contact active distance $\hat{d}=10^{-5}m$, contact stiffness $\kappa=\SI{1}{\mega\pascal}$, static friction velocity threshold $\epsilon_v=10^{-5}m/s$, and with the lagged normal forces in friction iteratively updated until converging to a solution with fully-implicit friction. All simulations are using implicit Euler time integration with time step size $h=\SI{0.01}{\second}$, and the Newton tolerance is set to $\epsilon_d=10^{-8}m/s$.

With sliding velocity and acceleration of the cube's center of mass plotted over time (\cref{fig:3d_slope}), they have all been shown to well match analytical solutions with small absolute errors. Even for $\mu=0.1999$ ($99.95\%$ that of the critical coefficient), the sliding behavior can still be accurately captured. The dip is formed as our velocity-time curve started above the analytical one due to the mollification of static-dynamic transition, and then cross and goes below it. For $\mu=0.2$, it is also confirmed that the acceleration vanishes, and the velocity throughout the simulation is around $\epsilon_v$, the static friction velocity threshold in our approximation to provide the static friction force in the same magnitude as dynamic friction.

\begin{figure}[ht]
    \centering
    \parbox{0.48\linewidth}{
    \centering
        {\small Variable $\mu$ and fixed $\epsilon_v$}\\\vspace{5pt}
        \includegraphics[width=0.49\linewidth]{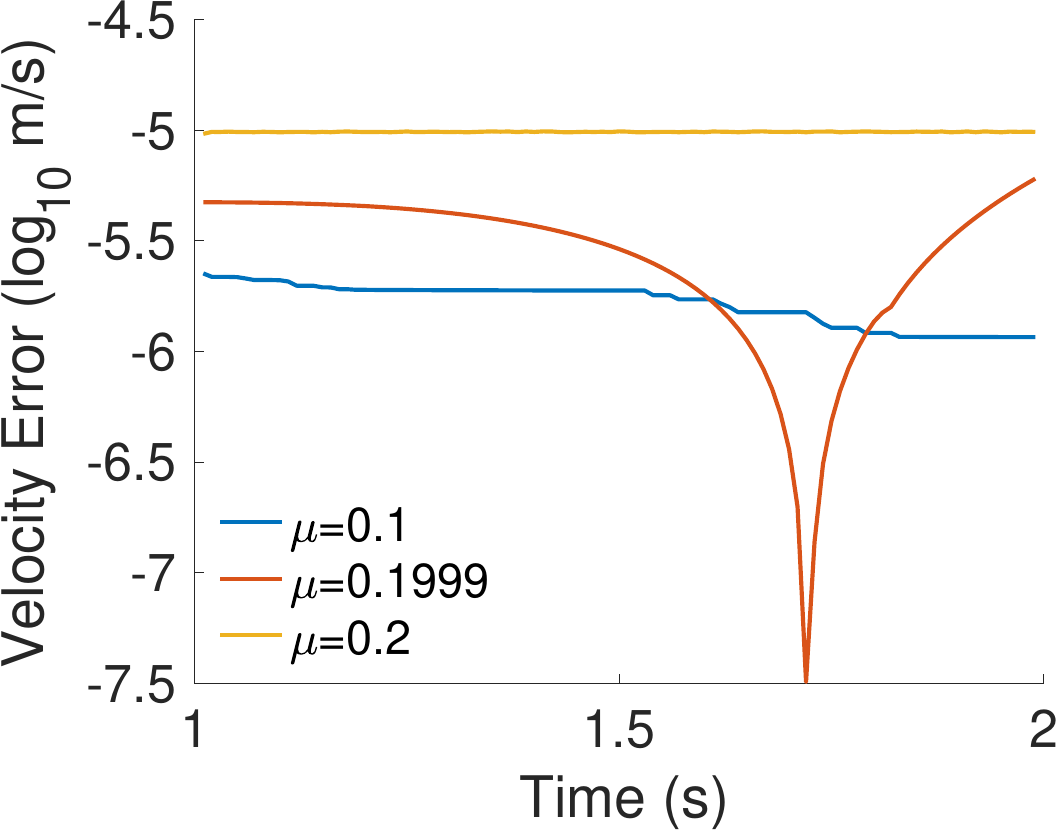}
        \includegraphics[width=0.49\linewidth]{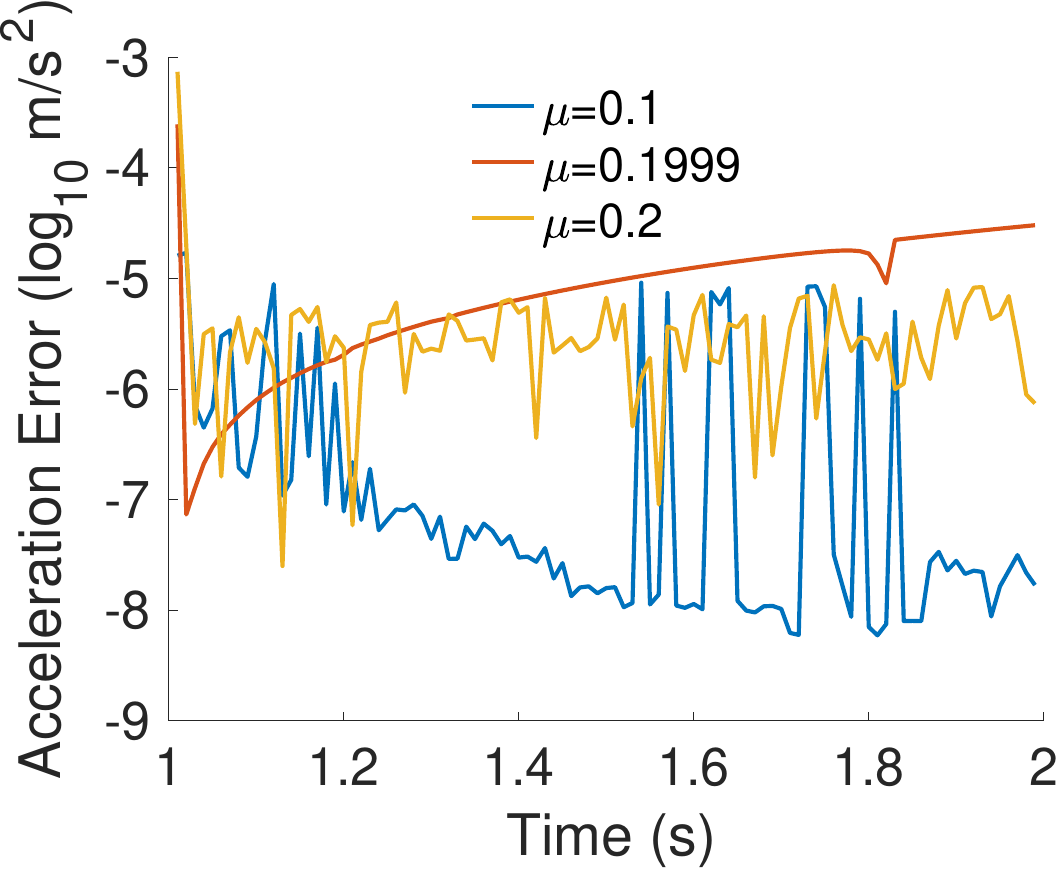}
    }
    \hspace{0.02\linewidth}
    \parbox{0.48\linewidth}{
        \centering
        {\small Fixed $\mu$ and variable $\epsilon_v$}\\\vspace{5pt}
        \includegraphics[width=0.49\linewidth]{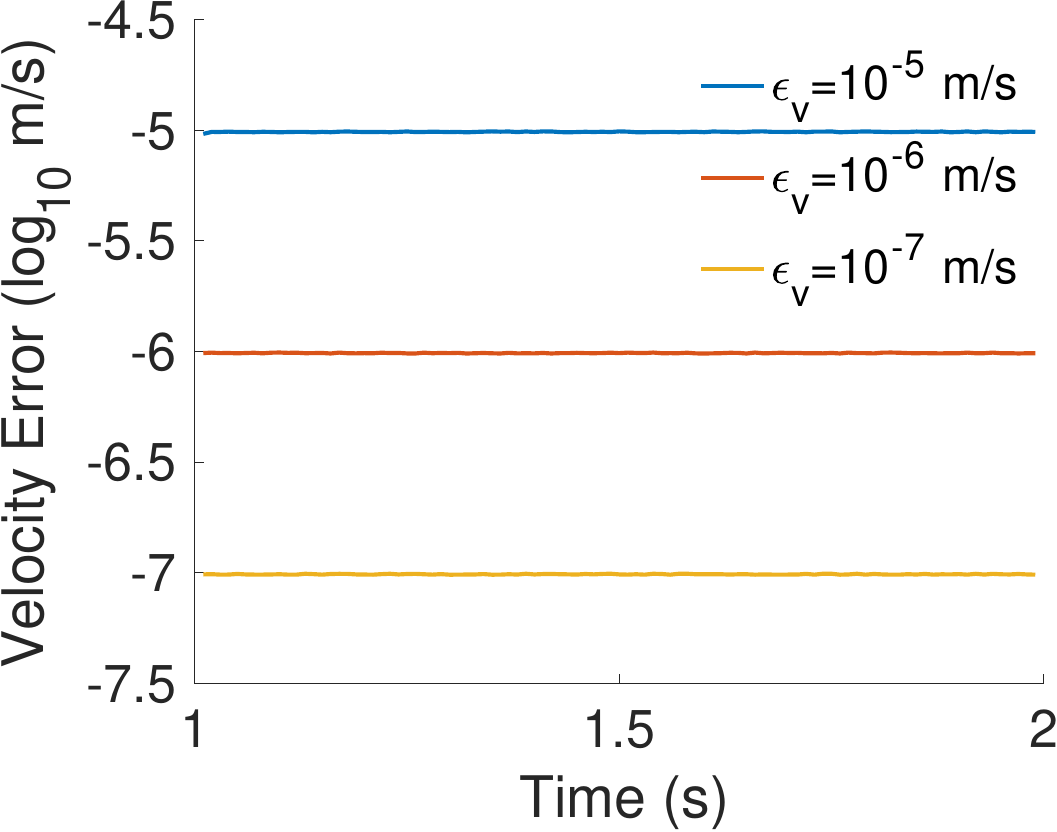}
        \includegraphics[width=0.49\linewidth]{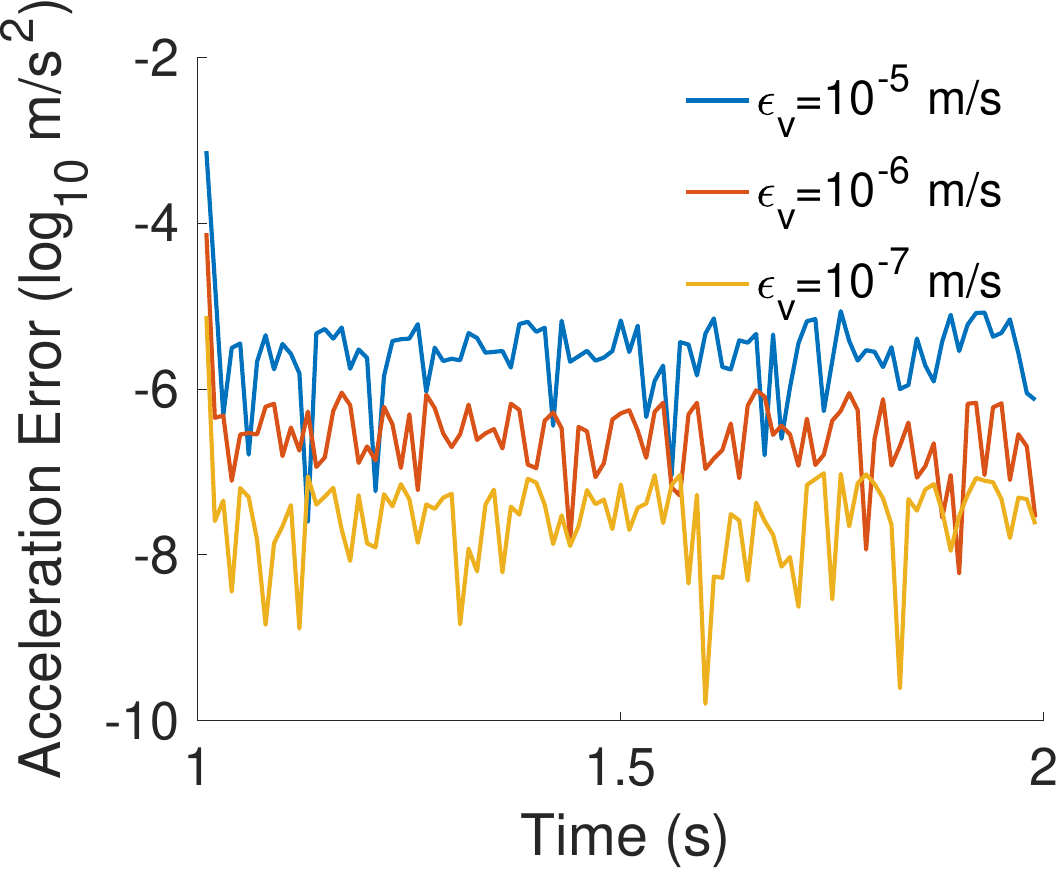}
    }
    \caption{
    \textbf{Critical angle on slope.}
    Velocity (column one and three) and acceleration (column two and four) error plots of the 3D slope test. On the left, $\mu=0.1$, $\mu=0.1999$, and $\mu=0.2$ (the critical value) are tested. On the right, we test convergence w.r.t. $\epsilon_v$ refinement with $\mu=0.2$, where both velocity and acceleration errors converge to $0$ with convergence rate $0.9959$ and $0.9926$ respectively.}
    \label{fig:3d_slope}
\end{figure}

However, notice that the error of acceleration is always much larger at the first time step than the latter steps after release. This is also an error introduced by our mollification of the dynamic-static friction transition. For $\mu=0.2$, the tangent velocity needs to increase immediately to nearly $\epsilon_v$ to obtain the static friction force for balance, which effectively ends up with a much larger acceleration than the solution ($\SI{0}{\meter/\second^2}$). By only refining $\epsilon_v$, at $\mu=0.2$ our velocity and acceleration errors (including the error in the first time step) both converges nearly linearly to the analytical solution (\cref{fig:3d_slope} 2nd row).

\subsection{Refinement in 3D with Self-Contact and Friction}

\begin{figure}
    \centering
    \includegraphics[width=0.24\linewidth]{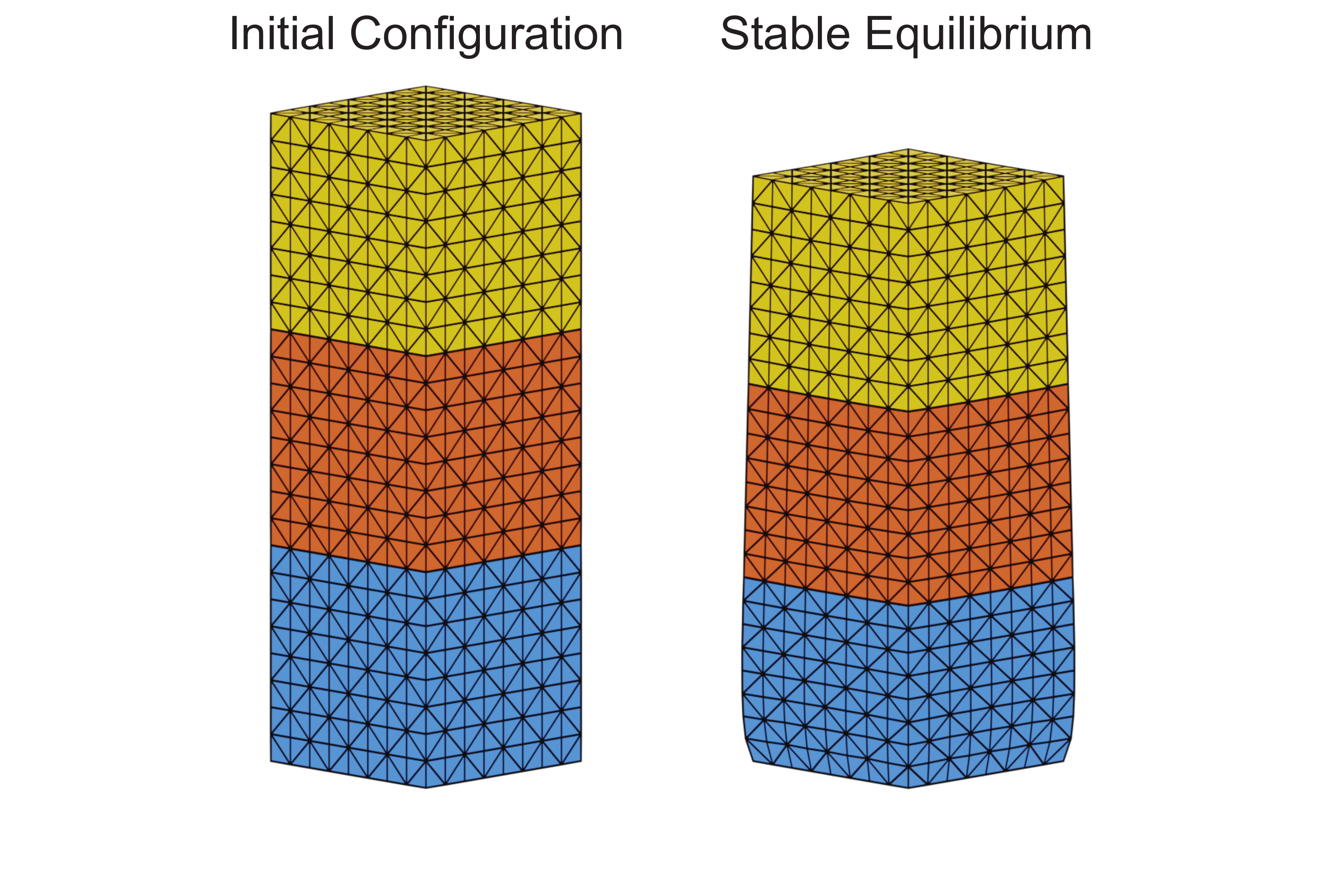}
    \includegraphics[width=0.24\linewidth]{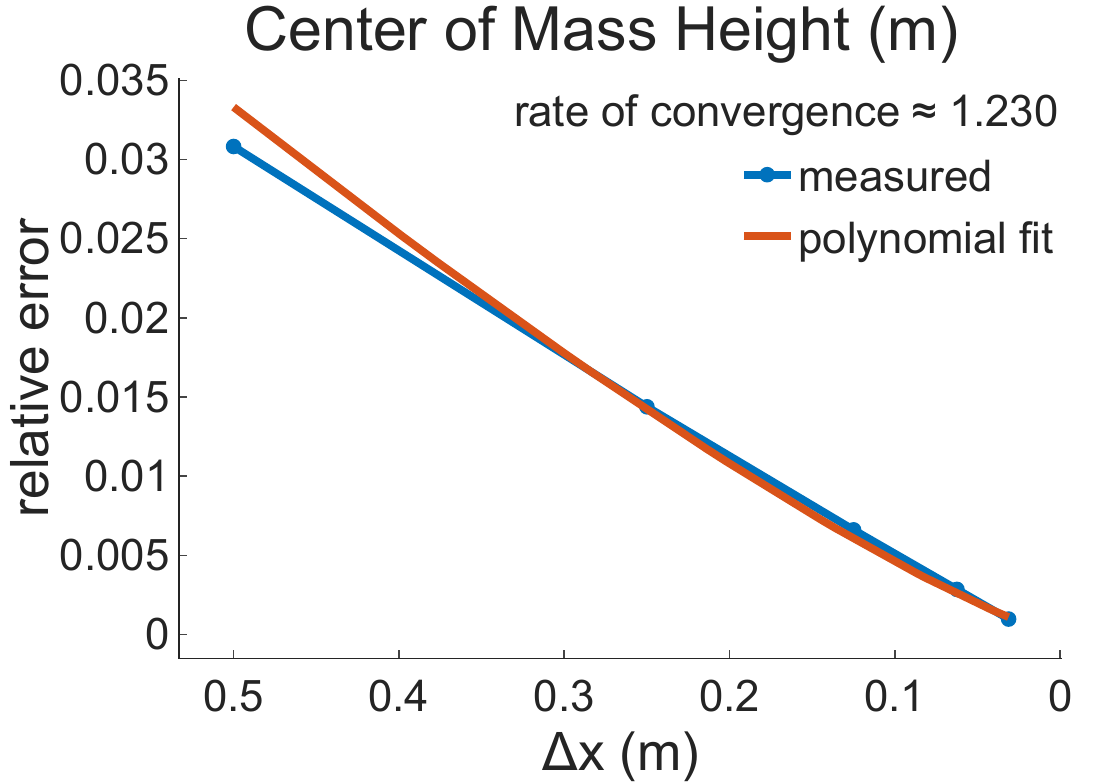}
    \includegraphics[width=0.24\linewidth]{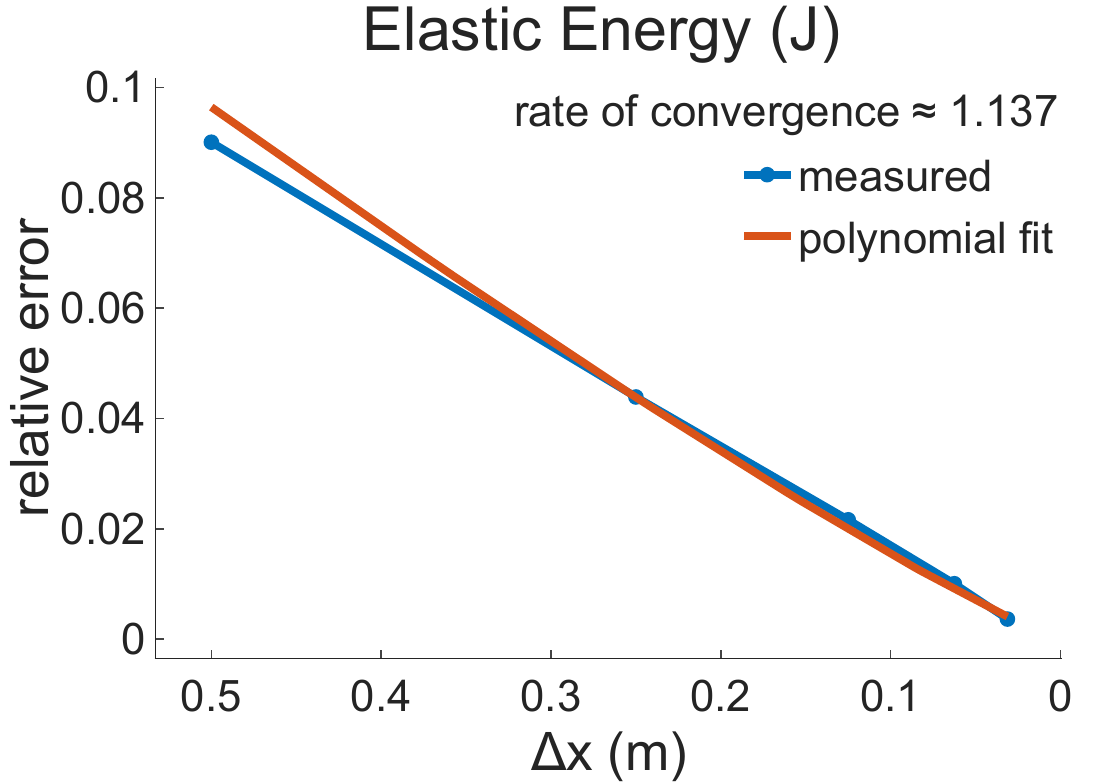}
    \includegraphics[width=0.24\linewidth]{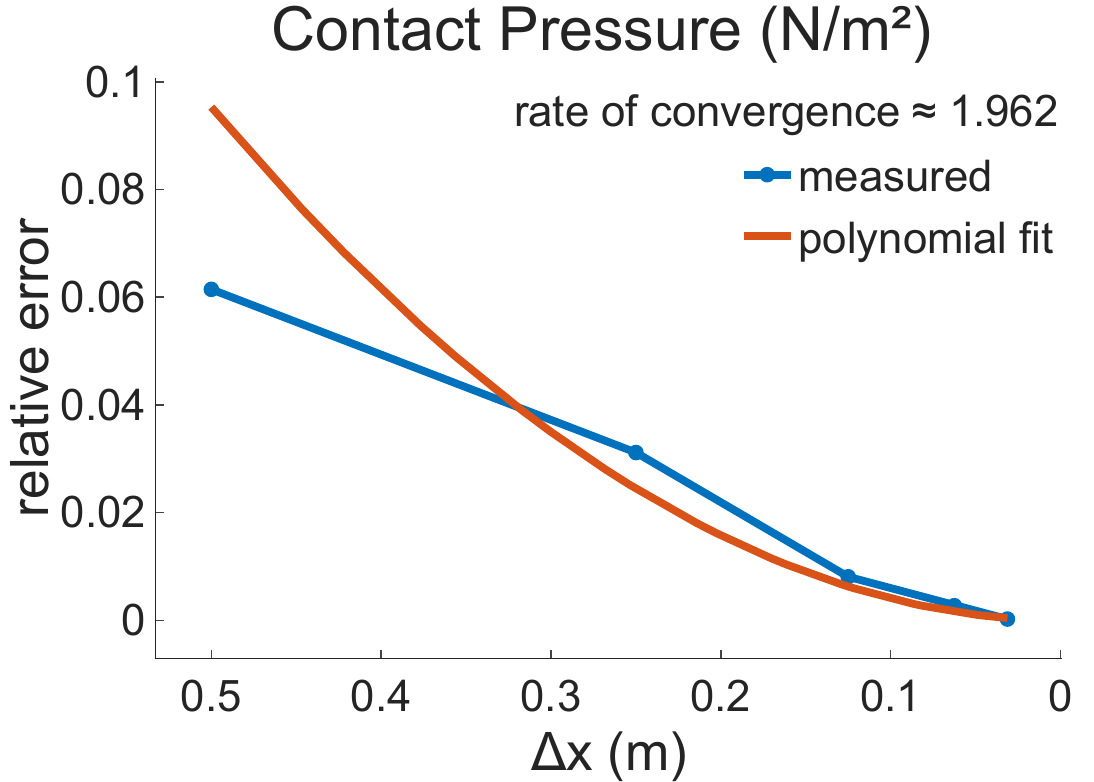}
    \caption{A stack of three cubes is simulated until equilibrium. We plot the relative error in the center of mass of the second block, elastic energy, and contact pressure at equilibrium for varying mesh resolutions. 
    We compute the relative error as 
    $\left|\frac{f(\Delta x) - f(\Delta x_{\text{ref}})}{f( \Delta x_{\text{ref}})}\right|$ 
    where 
    $\Delta x_{\text{ref}} = 2^{-7}~\si{\meter}$.
    }
    \label{fig:3D-cube-stack}
\end{figure}

We study the static equilibrium of a stack of three blocks under refinement. We choose material parameters ($E=\SI{2.5e5}{\pascal}$, $\nu=0.4$, and $\rho=\SI{1000}{\kilo\gram/\meter^3}$) which are able to reach stable equilibrium without the stack falling yet produces a visible deformation (see \cref{fig:3D-cube-stack}). Each block is $1\times1\times1~\si{\meter^3}$, and we use five levels of refinement $\Delta x=0.5,0.25,0.125,0.0625,\text{ and } 0.03125~\si{\meter}$. We set $\hat{d}=0.01\Delta x$ and create an initial gap of $0.9\hat{d}$. Additionally, we use friction with a coefficient of $\mu=0.5$ to prevent blocks from sliding off.

To solve for the static equilibrium we perform a series of incremental solves (time-stepping while zeroing out any velocity components at the start of every step). We perform this until convergence of the vertex positions (i.e. $\|x_i - x_{i-1}\|_\infty \leq \epsilon$). We choose a convergence tolerance of $\epsilon = \SI{1e-5}{\meter}$.

\Cref{fig:3D-cube-stack} shows the results of this study where we see convergence of the center of mass, elastic energy, and contact pressure.

\subsection{Dynamic Collision in 3D}

We setup a two spheres colliding experiment to test the ability of our method to resolve elasticity and kinetic energy transfer caused by high-speed collisions.
Two sphere meshes each with 29K nodes, \SI{0.04}{\meter} wide are placed \SI{0.08}{\meter} away from each other (\cref{fig:3d_2spheres_collide_sim} a and b), both with Neo-Hookean elasticity, Young's modulus $E=$\SI{1e7}{\pascal}, Poisson's ratio \SI{0.45}, and density \SI{1150}{\kilo\gram/\meter^3}, exactly the same material with the high-speed golf ball example in \citetIPC{}, except that we do not apply any damping here.
The two spheres are both with \SI{30}{\meter/\second} initial velocity towards each other. We set $\hat{d}$ to \SI{4e-5}{\meter} (0.1\% that of the sphere's diameter) and $\kappa$ to $0.1E$ as usual. For stability and accuracy, we apply BDF-2 time integration at time step size $h=$\SI{2e-5}{\second}.

\begin{figure}[ht]
    \centering
    \includegraphics[width=\linewidth]{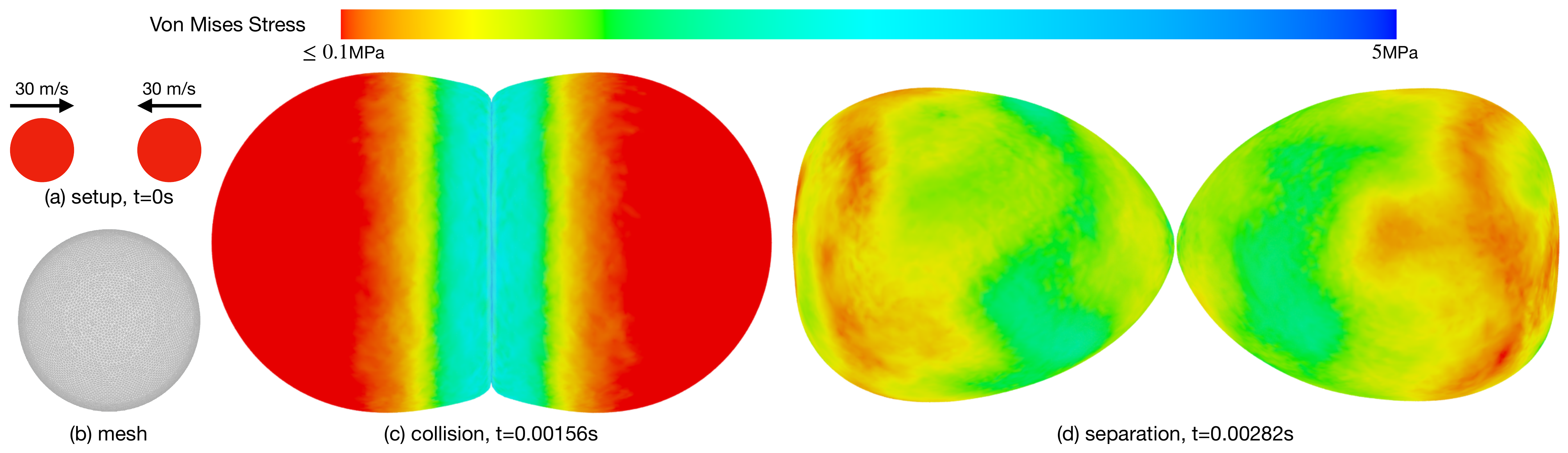}
    \caption{Two spheres colliding experiment setup and results with von Mises stress visualized.}
    \label{fig:3d_2spheres_collide_sim}
\end{figure}

During the simulation, the total x-direction momentum of the system is perfectly conserved with the the momentum of each sphere symmetrically and smoothly reversed (\cref{fig:3d_2spheres_collide_plot} right).
Since BDF-2 time integration is applied, the energy slightly dissipates around 10\% of the initial total energy (\cref{fig:3d_2spheres_collide_plot} left).
But nice symmetry and coherence on the elasticity and kinetic energy profile of the two spheres are accurately resolved. With the energy data of the left and right spheres plotted as curve and dots respectively, it is clear that the curves are well-aligned.
In \cref{fig:3d_2spheres_collide_sim} c and d, we visualize the Von Mises stress on the spheres at a collision state and right after separation.
Please see our supplemental video for the nice elastic wave propagation captured by our method.

\begin{figure}[ht]
    \centering
    \includegraphics[width=0.33\linewidth]{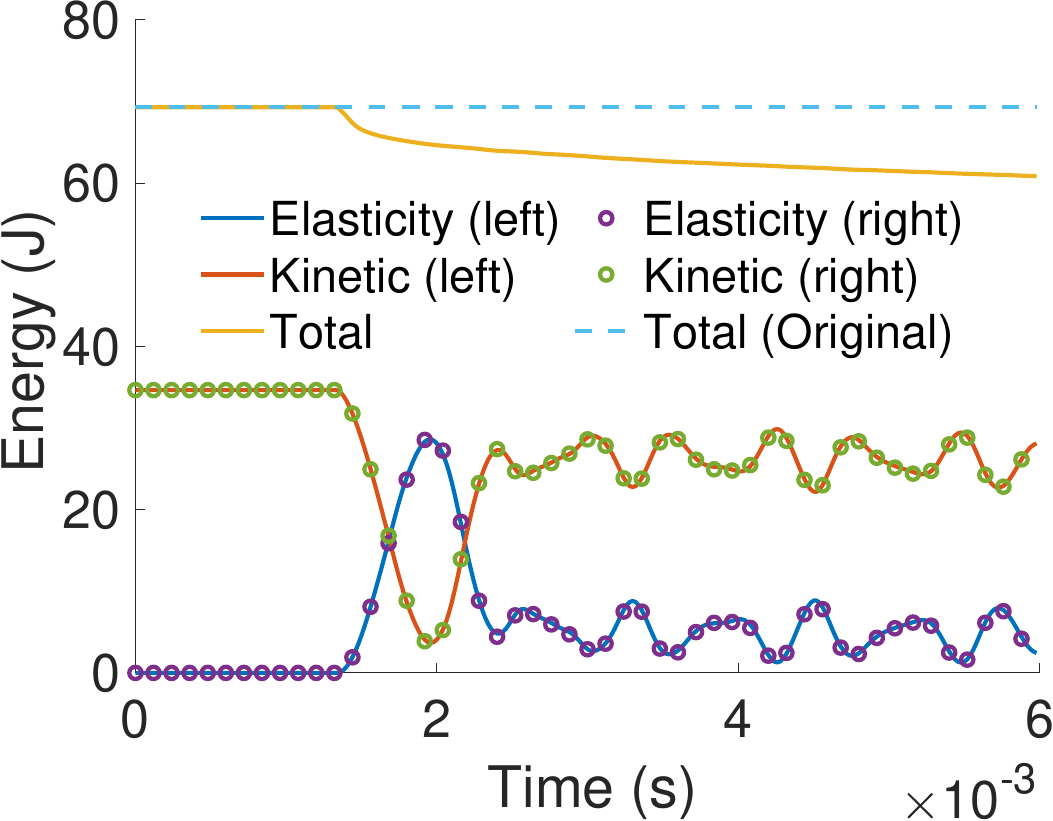} \hspace{0.2cm}
    \includegraphics[width=0.33\linewidth]{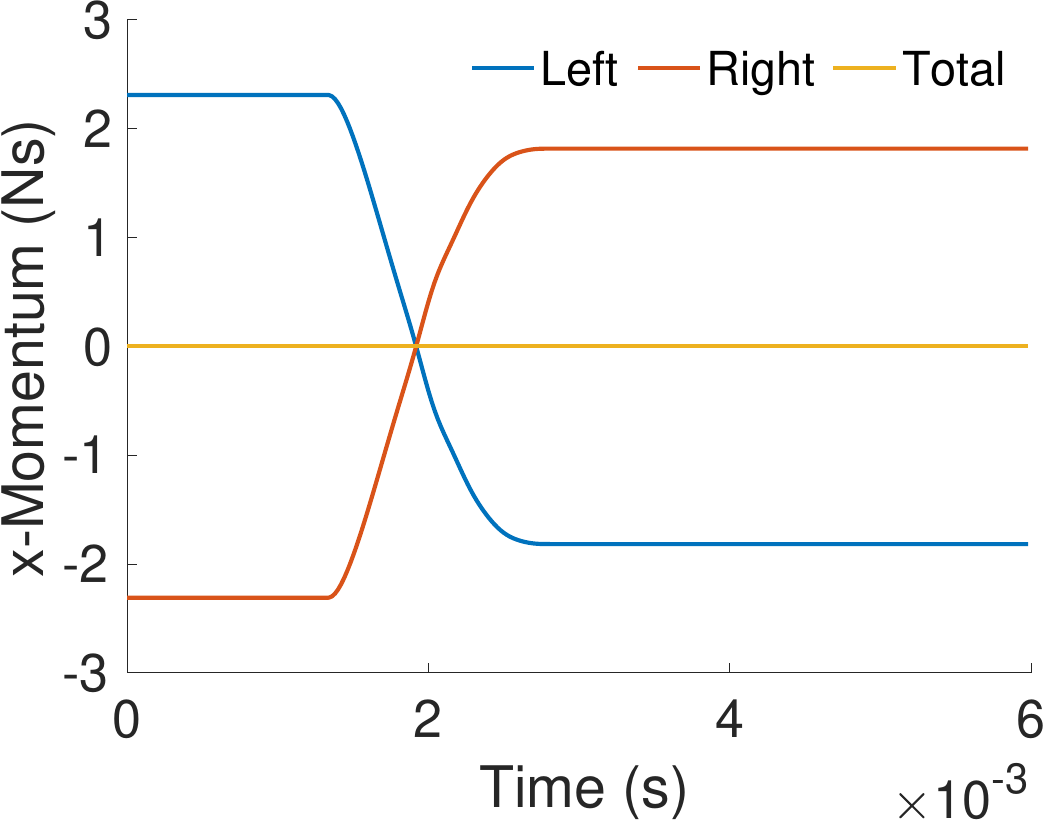}
    \caption{
    \textbf{Two spheres colliding.}
    Energy (left) and momentum (right) plots of the two spheres colliding experiment.
    The total energy is dissipated by approximately 10\% with BDF-2 time integration at $h=$\SI{2e-5}{\second}, 
    while the momentum is perfectly conserved throughout the simulation.
    The energy plot also shows nice symmetry and coherence on the elasticity and kinetic energy profile of the two spheres.}
    \label{fig:3d_2spheres_collide_plot}
\end{figure}

\subsection{3D Unit tests} 

\begin{figure}
    \centering
    \includegraphics[width=0.95\linewidth]{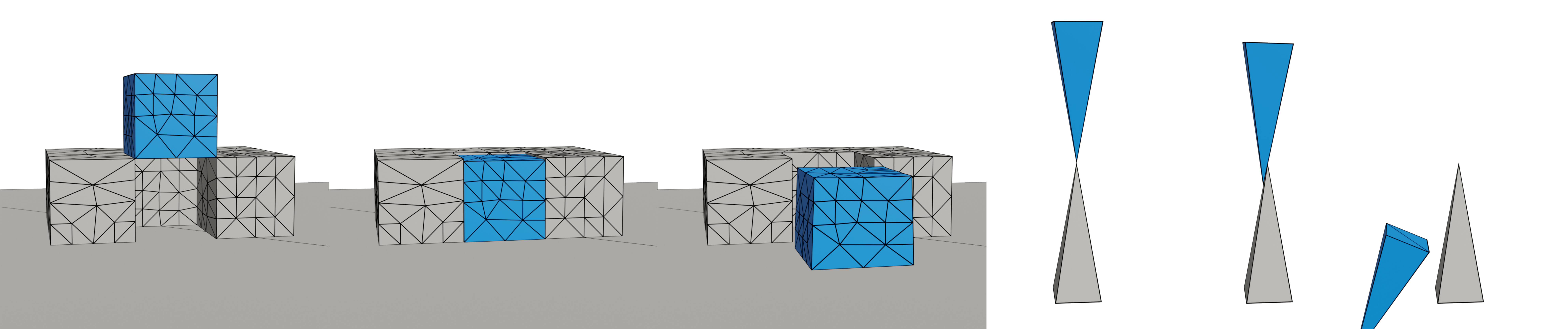}
    \caption{We demonstrate the ability of our method to handle both tight conforming contacts as well as sharp point-point contacts. Dynamic objects are colored in blue while static objects are colored in grey. Left: We drop a unit-sized cube into a static slot that is \SI{1e-5}{\meter} wider to than the cube. Right: We drop a dynamic spike onto a static spike where contact occurs between the spike points.}
    \label{fig:3D-unittests}
\end{figure}

We reproduce the unit tests presented by \citetIPC{} (\cref{fig:3D-unittests}). 

The first unit test tests the ability of our method to handle tight conforming contacts. We drop a unit cube into a C-shaped slot. The slot is only \SI{1e-5}{\meter} wider than the cube. We use a soft material ($E = \SI{1e6}{\pascal}$, $\nu = 0.4$, and $\rho = \SI{1e3}{\meter/\kilo\gram})$) and a $\hat{d}$ of \SI{1e-5}{\meter} with $\kappa=0.1 E$. We also utilize a framerate time step of \SI{0.04}{\second}. Our method passes this test without problem demonstrating the ability to handle small gaping and conforming contact.

Our second test positions two spikes such that they contact at the tips. This degenerate case is often challenging for traditional methods~\cite{erleben2018methodology}. We use the same material parameters as the first unit test and set $\hat{d} = \SI{1e-3}{\meter}$. Again we use a large $\Delta t = \SI{0.025}{\second}$. Our method has no difficulty in handling this contact, resolving the point-point contact into a downward diagonal motion.

\subsection{Application: Microstructures} 

\begin{figure}
    \centering
    \includegraphics[width=\linewidth]{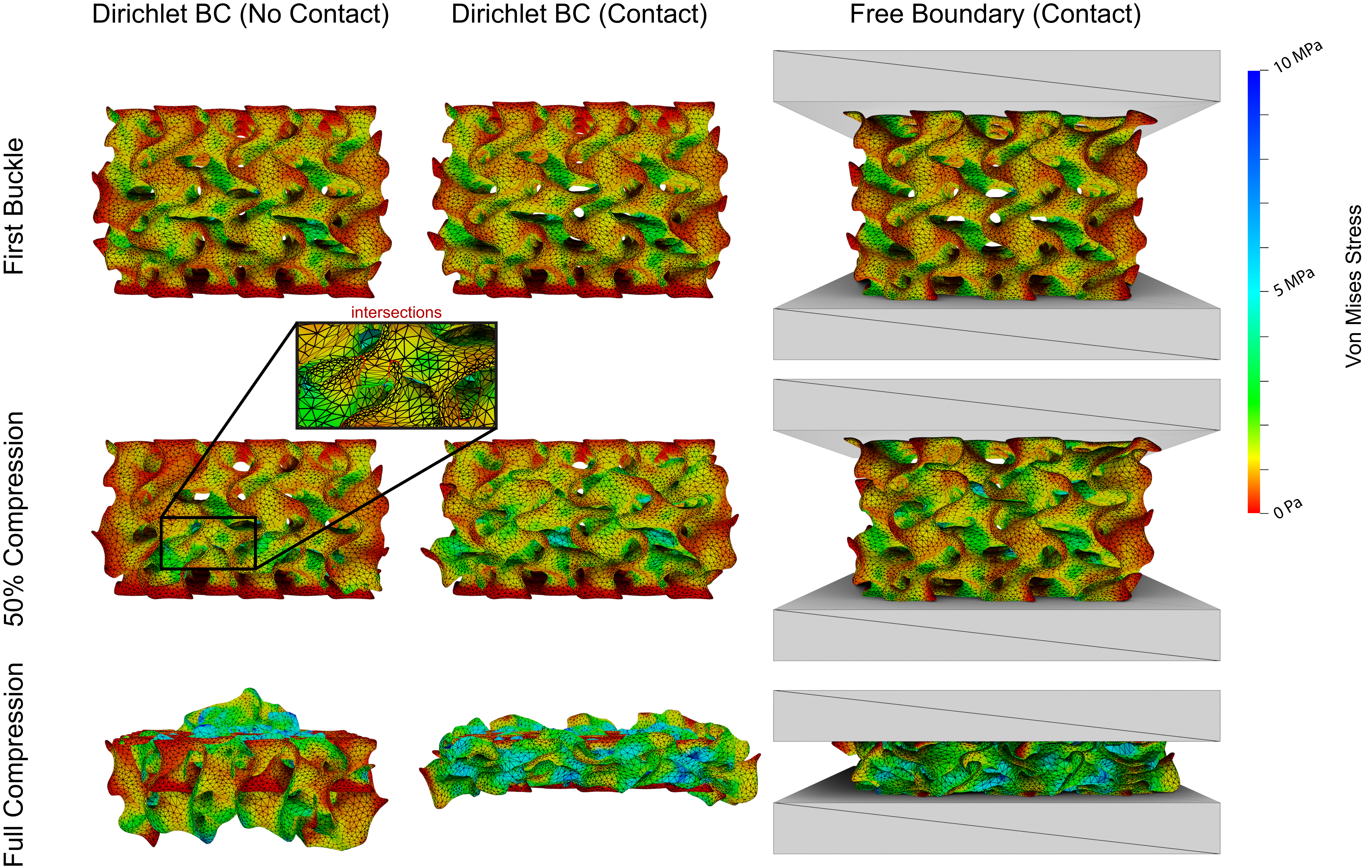}
    \caption{Compressing a gyroid microstructure with and without contact and two different boundary conditions. The von Mises stress is visualized. Without contact, large intersections are visible (highlighted in red in the inset) and the stresses are lower than when accounting for contact. The stresses are larger when using Dirichlet boundary conditions on the gyroid compared to having a free boundary. This is due to preventing the top and bottom of the gyroid from expanding in the plane orthogonal to compression.}
    \label{fig:microstructures}
\end{figure}

\begin{figure}
    \centering
    \includegraphics[width=0.75\linewidth]{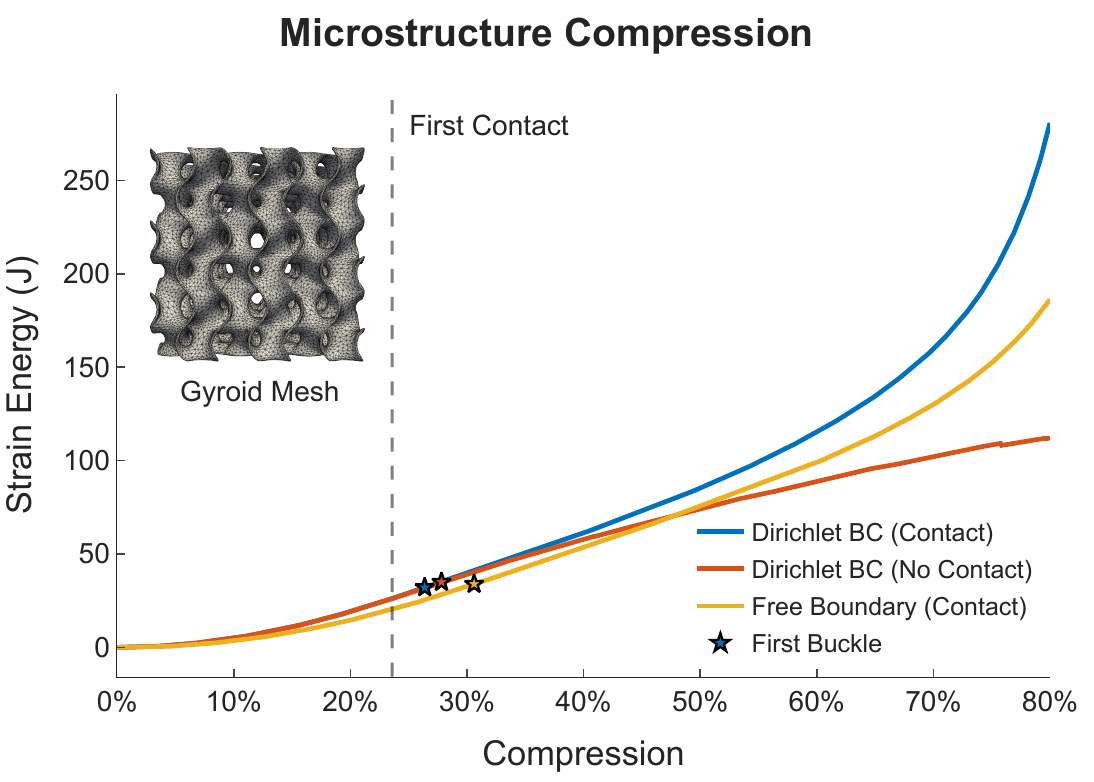}\\
    \caption{We compress a gyroid micro-structure using two different boundary conditions and one without contact. The Dirichlet boundary conditions simulations with and without contact lead to agreement in the measured strain energy up to around 26\% compression (soon after the first contact is detected). These plots quickly diverge for larger compression. When we apply the boundary conditions to a bottom and top plate and leave the gyroid's boundaries free (closely modeling a real world setup), it is clear the free boundary model's strain energy diverges with only ~15\% compression. This demonstrates the importance of modeling the deformation and contact of boundaries.}
    \label{fig:microstructures-plot}
\end{figure}

As an illustration of the importance of proper contact handling we apply our method to simulate the compression of a 3D printed gyroid micro-structure (\cref{fig:microstructures}). We perform three simulations: (1) Dirichlet boundary conditions on the gyroid to compress it \emph{without contact resolution}, (2) the same boundary conditions as (1) but now with contact modeled by our formulation, and (3) we apply the boundary conditions to rigid plates, leaving the gyroid's boundaries free (again with contact). For the material parameters we match those of an 3D printed elastomeric polyurethane ($E = \SI{9e6}{\pascal}$, $\nu = 0.48$, and $\rho = \SI{1.1e3}{\meter/\kilo\gram}$). We plot the strain energy versus compression in \cref{fig:microstructures-plot}. 

\bibliography{main}
\end{document}